\documentclass[11pt,reqno]{amsart}
\usepackage{amsmath,amsfonts,amsthm,color,amssymb, mathrsfs}

\usepackage{pxfonts}

\makeatletter
\@tfor\@tempa:=
\alpha \beta \gamma \delta \epsilon \zeta \eta \theta \iota \kappa \lambda \mu
\nu \xi \pi \rho \sigma \tau \upsilon \phi \chi \psi \omega \varepsilon
\vartheta \varpi \varrho \varsigma \varphi  
\do {
	\count255 \numexpr\@tempa+"7000\relax
	\expandafter\mathchardef\@tempa \count255 }
\makeatother

\usepackage{xcolor}
\usepackage[T1]{fontenc}
\usepackage{wasysym}
\usepackage[normalem]{ulem}
\usepackage{stmaryrd} 

\usepackage[left=1in, right=1in, top=1.1in,bottom=1.1in]{geometry}
\usepackage{enumitem}
\usepackage{hyperref}
\hypersetup{
	colorlinks   = true,
	citecolor    = blue,
	linkcolor=blue
}
\allowdisplaybreaks
\usepackage{csquotes}
\usepackage{graphicx}
\usepackage{pstricks}
\usepackage{lmodern}

\newtheorem{thm}{Theorem}[section]

\newtheorem{lem}[thm]{Lemma}

\newtheorem{defn}[thm]{Definition}

\def \a{{\alpha}}
\def \b{{\beta}}
\def \G{{\Gamma}}

\newcommand{\E}{\mathbb{E}}

\newcommand{\bean}{\begin{eqnarray*}}
	\newcommand{\eean}{\end{eqnarray*}}

\newcommand{\EE}{\mathbb{E}}

\newcounter{bean}
\newcommand{\benuma}{\setlength{\labelwidth}{.25in}
	
	\begin{list}
		{(\alph{bean})}{\usecounter{bean}}}
	\newcommand{\eenuma}{\end{list}}

\begin{document}
	
	\title[The Numerical OM functional for SDEs with fBM]{The Numerical Onsager-Machlup action functional for Euler discretized SDEs driven by fractional Brownian motion}
	\author[S. Liu]{Shanqi Liu}
	\address{ School of Sciences, Great Bay University, Dongguan, Guangdong, 523000, China}
	\address{School of Mathematics, University of Science and Technology of China, Hefei, Anhui, 230026, China}
	\email{shanqiliumath@126.com}
	
	\author[Q. Lian]{Qiqi Lian}
    \address{School of Mathematics, Sun Yat-sen University, Guangzhou, Guangdong, 510275, China}
    \address{School of Sciences, Great Bay University, Dongguan, Guangdong, 523000, China}
    \email{lianqq3@mail2.sysu.edu.cn}
	
	\author[J. Duan]{Jinqiao Duan}
	\address{Department of Mathematics and Department of Physics, Great Bay University, Dongguan, Guang
		dong, 523000, China}
	\email{duan@gbu.edu.cn}
	
	\author[H. Gao]{Hongjun Gao}
	\address{School of Mathematics, Southeast University, Nanjing, Jiangsu 211189, China}
	\email{hjgao@seu.edu.cn (Corresponding author)}

	\vspace{-2cm}
	\maketitle
	\vspace{-1cm}
	\begin{abstract}
		In this work, different from previous results, the explicit expression of numerical Onsager-Machlup action functional for Euler discretized SDEs driven by fractional Brownian motion is derived provided the drift coefficient and numerical reference path satisfy some suitable conditions. Then numerical fractional Euler-Lagrange equations for numerical Onsager-Machlup action functional are also obtained. Finally, numerical experiments to illustrate and support the theoretical findings. 
		
		\medskip\noindent\textbf{Keywords.} numerical Onsager-Machlup action functional; Euler Scheme; fractional Brownian motion; Girsanov transformation.
		\smallskip
		
		\noindent\textbf{AMS 2020 Subject Classifications.} 60F05; 60H20; 65C05.
	\end{abstract}
\section{Introduction}
Stochastic differential equations (SDEs), which serve as a fundamental mathematical framework for modeling dynamical systems subject to random perturbations, have been widely applications numerous fields \cite{Mao07,Ok13} such as financial engineering, systems biology, physical chemistry and control theory. However in many applications, random fluctuations exhibit temporal dependence and long-range memory, which cannot be captured by standard Brownian motion due to its independent increments. Fractional brownian motion (fBm) $B^H={B_t^H, t\in\mathbb{R}}$, characterized by the Hurst parameter $H\in(0,1)$ and its covariance
$$\E\big(B_t^HB_s^H\big)=\frac{1}{2}\big\{|t|^{2H}+|s|^{2H}-|t-s|^{2H}\big\},$$
provides a natural framework for modeling such phenomena. 

Consider the following stochastic differential equation
\begin{align}\label{main equation}
	d X_t=b(X_t)d t+d B^H_t, X_0=x,
\end{align}
where $b\in C^2_b(\mathbb{R})$. Moret and Nualart \cite{MN02} study the limit behaviour as $\varepsilon$ tends to zero of the ratios
\begin{align}\label{rotios}
	\gamma_{\varepsilon}(\phi)=\frac{\mathbb{P}(\|X-\phi\|\leq\varepsilon)}{\mathbb{P}(\|B^H\|\leq\varepsilon)},
\end{align}
where $\phi$ is a function such that $\phi-x$ belongs to the Cameron-Martin space associated with the fractional Brownian motion and $\|\cdot\|$ is a suitable norm. When the limit is of the form:
$$\lim\limits_{\varepsilon\rightarrow 0}\gamma_{\varepsilon}(\phi)=\exp (J(\phi)),$$
 the functional $J$ is called the Onsager-Machlup \cite{OM1,OM2} (OM) action functional associated with $(\ref{main equation})$ and with the norm $\|\cdot\|$.

The Onsager-Machlup action functional for classical stochastic differential equations driven by Brownian motion have been extensively studied over the past decades \cite{Cap95,Cap20,DB78,FK82,HT96,IW14,Str57,TM57}. 
Ikeda and Watanabe \cite{IW14} first derived the OM action functional for reference paths $\phi\in C^2([0,1],\mathbb{R}^d)$ under the supremum norm $\|\cdot\|$. 
Durr and Bach \cite{DB78} obtained the same result using Girsanov's transformation and quasi-translation invariance of the Wiener measure, together with the path integral representation based on a potential structure. 
Shepp and Zeitouni \cite{SZ92} showed that the result remains valid for any norm equivalent to the supremum norm, provided that $\phi-x$ belongs to the Cameron-Martin space. 
Capitaine \cite{Cap95} further extended these results to a large class of natural norms on Wiener space, including H\"{o}lder, Sobolev, and Besov norms. 
In \cite{HT96}, Hara and Takahashi computed the OM action functional for elliptic diffusion processes under the supremum norm. 
This result was subsequently generalized by Capitaine in \cite{Cap20} to norms dominating the $L^2$-norm on $\mathbb{R}^d$.

In \cite{MN02}, the authors proved that the OM action functional corresponding to \eqref{main equation} is given by
$$L(\phi,\dot{\phi})=-\frac{1}{2}\int_{0}^{1}\Big(\dot{\phi}_s-\big(K^H\big)^{-1}\int_{0}^{s}b(\phi_u)du\Big)^2ds-\frac{1}{2}\int_{0}^{1}b'(\phi_s)ds,$$
(the divergence term corrected from $d_H$ to $1$, following the
normalization discussion in \cite{Maayan17}), where the reference path $\dot{\phi}$ is a function such that $K^H \dot{\phi}=\phi-x$, $d_H$ is a constant depending on $H$ and the definition of $K^H$ can be found in subsection \ref{K^H}.

After deriving the Onsager-Machlup action functional, one can characterize the most probable transition path through the associated variational principle. However, in numerical simulations of most probable transition paths for stochastic differential equations driven by fractional Brownian motion, the displayed sample trajectories are typically generated by numerical discretization schemes rather than by the exact solution process itself. This naturally raises the following question: compared with the OM action functional associated with the original solution process, would it be more reasonable and practically meaningful to consider the OM functional corresponding to the numerical approximation?

On the other hand, increasing attention has recently been devoted to the dynamical behaviors of numerical methods, which provide effective tools for investigating intrinsic properties of the exact solution; see, for example, \cite{CDH,CHJS1,CHJS2,DK23}. In the context of stochastic dynamical systems driven by fractional Brownian motion, the lack of the Markov property and semimartingale structure introduces additional difficulties in both analysis and computation. It is therefore natural to ask whether numerical schemes preserve the Onsager-Machlup action functional structure of the original fBm-driven SDEs.

Motivated by these considerations, it is then natural to ask the following question: \textit{Can we derive the numerical Onsager-Machlup action functional for Euler-discretized SDEs driven by fractional Brownian motion?}

More precisely, we reformulate limiting behavior of ratios of the form
\begin{align}\label{numerical rotios}
	\gamma^n_{\varepsilon}(\phi^n)=\frac{\mathbb{P}(\|X^n-\phi^n\|\leq\varepsilon)}{\mathbb{P}(\|B^H\|\leq\varepsilon)},
\end{align}
where $\{\phi^n\}_{n\in\mathbb{N}}$ is a given family of reference paths, and $X^n$ denotes Euler approximation of \eqref{main equation} and is defined by
\begin{align}\label{Euler-main equation}
	X^n_t=x+\int_{0}^{t}b(X^n_{\eta_n(s)})d s+B^H_t,
\end{align}
or equivalently,
\begin{align}\label{Euler-main equation-1}
	X^n_t=X^n_{t_k}+b(X^n_{t_k})(t-t_k)+B^H_t-B^H_{t_k},
\end{align}
for $t_k\leq t<t_{t_{k+1}}$. 
For simplicity of presentation, we consider a uniform partition
of the interval $[0,1]$, given by $t_i=\frac{i}{n},i=0,\cdots,n$. 
Moreover, for every positive integer $n\in\mathbb{N}$, we
define $\eta_n(t)=t_k$ for $t_k\leq t <t_k+\frac{1}{n}$. 
For the Euler type schemes for SDEs driven by fractional Brownian motion, see e.g., \cite{HLN16,LT19,Neu06}.

We now state our main result on the numerical Onsager-Machlup action functional for Euler-discretized stochastic differential equations driven by fractional Brownian motion, considering both the singular and regular cases. Let $\{\phi^n\}_{n\in\mathbb N}$ be a family of numerical reference paths such that
$K^H\dot{\phi}^n=\phi^n-x.$
The corresponding numerical Onsager-Machlup action functional is given in both cases by
$$L^n(\phi^n,\dot{\phi}^n)=-\frac{1}{2}\int_{0}^{1}\Big(\dot{\phi}^n_s-\big(K^H\big)^{-1}\int_{0}^{s}b(\phi^n_{\eta_n(u)})du\Big)^2ds,$$
compared with the continuous Onsager-Machlup action functional, a notable feature of the discrete setting is the absence of a divergence correction term. 

The main technical difficulty in deriving the numerical Onsager-Machlup action functional for fractional Brownian motion lies in the structure of the discretized Volterra kernel. The expressions $\mathcal{J}^n_{12}$ and $\mathcal{I}^n_{12}$(in \S 4) involve highly coupled double summation terms over the partition intervals  after time discretization, the estimates must be carefully reorganized at the discrete level in order to control both the singular (regular) behavior of the fractional kernel and the coupling between different partition intervals.

To handle these difficulties, we introduce suitable decompositions of the discretized fractional operators and prove the nuclearity of the corresponding integral operators. This allows us to identify the numerical Onsager-Machlup action functional and to clarify how the discretized Volterra structure affects its form.

Based on these results, we derive the numerical Euler-Lagrange equations associated with the numerical Onsager-Machlup action functional. These equations describe the most probable paths of the discretized solution process \eqref{Euler-main equation}. We also present numerical experiments that support the theoretical results.

The rest of the paper is organized as follows. In Section \ref{sec:opmr}, we recall recall some classical results for fractional calculus and fractional Brownian motion. And we set our setting in Section \ref{framework}. The deviations of the numerical Onsager-Machlup action functional for Euler discretized SDEs are presented in Section \ref{DEVIATION}.  In Section \ref{TMPP}, we further derive Euler-Lagrange fractional equations of the numerical OM action functional in singular case and regular case, some numerical examples are also provided. Finally, we postpone some notations, definitions and lemmas to the end in Appendix \ref{Appendix}. Throughout the paper C denotes a
positive constant whose value may change from line to line. The dependence of
constants on parameters when relevant will be denoted by special
symbols or by mentioning the parameters in brackets, e.g.
$C_{\a,
\b}$.

\section{Preliminaries}\label{sec:opmr}
In this section, we recall some classical results for fractional calculus and fractional Brownian motion.
\subsection{Fractional calculus}
For $f\in L^1[a,b]$ and $\a>0$ the right-side fractional Riemann-Liouville integrals of $f$ of order $\a$ on $(a,b)$ are defined at all $x$ by
$$(I^{\a}_{a^+}f)(x)=\frac{1}{\G(\a)}\int_{a}^{x}(x-y)^{\a-1}f(y)dy,$$
where $\G$ denotes the Euler function.

This integral extends the usual $n$-order iterated integrals of $f$ for $\a=n\in \mathbb{N}$.
We have the first composition formula
$$I^{\a}_{a^+}(I^{\beta}_{a+}f)=I^{\a+\beta}_{a^+}f.$$

When $\a p>1$ any function in $I^{\a}_{a^+}(L^p)$ is $\big(\a-\frac{1}{p}\big)$-H$\mathrm{\ddot{o}}$lder continuous.  The fractional derivative can be introduced as an inverse operation. If $1\leq p<\infty$, we denote by $I^{\a}_{a^+}(L^p)$ the image of $L^p([a,b])$ by the operator $I^{\a}_{a^+}$. If $f\in I^{\a}_{a^+}(L^p)$, the function $\phi$ such that $f=I^{\a}_{a^+}\phi$ is unique in $L^p$ and it agrees with the left-sided Riemann-Liouville derivative of $f$ of order $\a$ defined by
$$(D^{\a}_{a^{+}}f)(x)=\frac{1}{\G(1-\a)}\frac{d}{dx}\int_{a}^{x}\frac{f(y)}{(x-y)^{\a}}dy.$$
On the other hand, any H$\mathrm{\ddot{o}}$lder continuous function of order $\b>\a$ has a fractional derivative of order $\a$. The derivative of $f$ has the following Weyl representation:
\begin{align}\label{weyl representation}
	(D^{\a}_{a^+}f)(x)&=\frac{1}{\G(1-\a)}\Big(\frac{f(x)}{(x-a)^{\a}}+\a\int_{a}^{x}\frac{f(x)-f(y)}{(x-y)^{\a+1}}dy\Big)\mathbb{I}_{(a,b)}(x),
\end{align}
where the convergence of the integrals at the singularity $x=y$ holds in $L^p$-sense.

Recall that by construction for $f\in I^{\a}_{a^+}(L^p)$,
$$I^{\a}_{a^+}(D^{\a}_{a^+}f)=f$$
and for general $f\in L^1([a,b])$ we have
$$D^{\a}_{a^+}(I^{\a}_{a^+}f)=f.$$
If $f\in I^{\a+\b}_{a^+}(L^1), \a\geq0,\b\geq0,\a+\b\leq1$, we have the second composition formula
$$D^{\a}_{a^+}(D^{\beta}_{a+}f)=D^{\a+\beta}_{a^+}f.$$
The following estimate for the norm of the fractional integral will be used later in this paper,
\begin{align}\label{fractional integral estimate}
	\|I^{\a}_{a^+}f\|_{L^p([a,b])}\leq\frac{(b-a)^{\a}}{\a|\G(\a)|}\|f\|_{L^p([a,b])},
\end{align}
provided $f\in L^p([a,b])$.
\subsection{The Wiener integral with respect to fractional Brownian motion}\label{K^H}
The fractional Brownian motion has the integral representation in law:
\begin{align}\label{relation between B and W}
	B_t^H=\int_{0}^{t}K^H(t,s)dW_s,
\end{align}
where $W$ is a standard Brownian motion and $K^H$ is the square integral kernel:
\begin{align}\label{K^H(r,u)}
	K^H(r,u)=c_H(r-u)^{H-\frac{1}{2}}+c_H(\frac{1}{2}-H)\int_{u}^{r}(\theta-u)^{H-\frac{3}{2}}\Big(1-(\frac{u}{\theta})^{\frac{1}{2}-H}\Big)d\theta,
\end{align}
with
$$c_H=\Big(\frac{2H\G(\frac{3}{2}-H)}{\G(H+\frac{1}{2})\G(2-2H)}\Big)^{\frac{1}{2}}.$$
We also denote by $K^H$ the operator in $L^2([0,1])$ associated with the kernel $K^H$, that is
$$\big(K^Hh\big)(s)=\int_{0}^{s}K^H(s,r)h(r)dr.$$
If $p>\frac{1}{H+\frac{1}{2}}$ the operator $K^H$ is continuous in $L^p$, and we denote by
$$\mathcal{H}^p=\{K^H h, h\in L^p([0,1])\}$$
the image of $L^p([0,1])$ by $K^H$.

It is proved in \cite{DU99} (Theorem $2.1$) that the operator $K^H$ can be expressed in terms of fractional integrals as follows,
$$\Big(K^Hh\Big)(s)=d_HI^{1-2\alpha}_{0+}s^{\alpha}I^{\alpha}_{0+}s^{-\alpha}h
, \quad \text{if}\ H\leq \frac{1}{2},$$
$$\Big(K^Hh\Big)(s)=d_HI^{1}_{0+}s^{\alpha}I^{\alpha}_{0+}s^{-\alpha}h
, \quad \text{if}\ H\geq \frac{1}{2},$$
where $d_H=c_H\Gamma(H+1/2), \a=|H-\frac{1}{2}|$ and $h\in H$. Hence, the inverse operator is defined by
$$\Big(\big(K^H\big)^{-1}h\Big)(s)=d_H^{-1}s^{\alpha}D^{\alpha}_{0+}s^{-\alpha}h'\quad \text{if}\ H\geq \frac{1}{2},$$
$$\Big(\big(K^H\big)^{-1}h\Big)(s)=d_H^{-1}s^{\alpha}D^{\alpha}_{0+}s^{-\alpha}D^{1-2\alpha}_{0+}h\quad \text{if}\ H\leq \frac{1}{2},$$
for all $h\in I^{H+\frac{1}{2}}_{0+}(L^2([0,1]))$. If $h$ is differentiable it can be proved in the last case
that
$$\Big(\big(K^H\big)^{-1}h\Big)(s)=d_H^{-1}s^{-\alpha}I^{\alpha}_{0+}s^{\alpha}h'.$$

\section{Setting}\label{framework}
In this section, we first introduce the structure of numerical reference path. Then we further convert the problem of deriving numerical Onsager-Machlup action functional into clearer conditional exponential moments by Girsanov's theorem.
\subsection{The structure of numerical reference path}
Let $B^H={B_t^H, t\in[0,1]}$ be a fractional Brownian motion with Hurst index $0<H<1$ ($H\neq\frac{1}{2}$) defined on the given complete filtered probability space $(\Omega,\mathcal{F},(\mathcal{F}_t)_{t\ge0},\mathbb{P})$. Consider the following discretized stochastic differential equation:
\begin{equation}\label{nsde}
	X^n_t=x+\int_{0}^{t}b(X^n_{\eta_n(s)})d s+B^H_t,
\end{equation}
where $b$ is a function in $C^2_b([0,1])$.

The operator $K^H$ associated with the kernel \eqref{K^H(r,u)} defines an isomorphism from $L^2([0,1])$ to $I_{0^{+}}^{H+1/2}(L^2([0,1]))$. Hence, the space $\mathcal{H}^2=\{K^H h, h\in L^2([0,1])\}$ is included in the sapce of H$\mathrm{\ddot{o}}$lder continuous functions of order $H$. Let $\phi^n_{n\in\mathrm{N}}$ be a family numerical reference path such that $\phi^n-x\in\mathcal{H}^p$ with $p>\frac{1}{H}$ for every $n\in\mathrm{N}$. We will denote $\dot{\phi}^n$ the function in $L^p([0,1])$ such that
\begin{align}\label{structure of numerical reference path}
	K^H\dot{\phi}^n=\phi^n-x.
\end{align}
Our purpose is to study the limit behavior of the rotios for every $n\in\mathbb{N}$
$$\gamma^n_{\varepsilon}(\phi)=\frac{\mathbb{P}(\|X^n-\phi^n\|\leq\varepsilon)}{\mathbb{P}(\|B^H\|\leq\varepsilon)},$$
when $\varepsilon$ tends to zero and $\|\cdot\|$ is a suitable norm.
\subsection{Girsanov transform for discretized stochastic differential equation}
Consider the following auxiliary SDE on $\mathbb{R}$:
\begin{equation}
	\tilde{X}^n_t=x+{\phi}^{n}_t+B^H_t.\nonumber
\end{equation}
Define
\begin{align}
	\tilde{W}_t=W_t-\int_{0}^{t} \eta^n_s ds,\nonumber
\end{align}
where
\begin{align}
	\eta^n_s=\Big((K^H)^{-1}\Big(b(\tilde{X}^n_{\eta_n(u)})\Big)(s)-\dot{\phi}^{n}_s\Big), \quad s \in[0, 1],\nonumber
\end{align}
and set
\begin{align}
	\tilde{B}_t^H&=\int_{0}^{t}K^H(t,s)d\tilde{W}_s.
\end{align}
Here, $W$ denotes the Brownian motion in \eqref{relation between B and W}.

Next, introduce the probability measure $\tilde{\mathbb P}$ by
\begin{equation}\label{eta}
	\frac{d\tilde{\mathbb P}}{d\mathbb P}=
	\exp\left(
	\int_0^1\eta_s^ndW_s
	-\frac12\int_0^1|\eta_s^n|^2ds
	\right).
\end{equation}
In Lemma \ref{girsanov}, we shall verify that $\eta^n$ is adapted and satisfies Novikov's condition. Consequently, the exponential process in \eqref{eta} defines a martingale, and Girsanov's theorem implies that $\tilde W$ is a standard Brownian motion under $\tilde{\mathbb P}$.

By the Volterra representation of fractional Brownian motion, it follows that $\tilde B^H$ is again a fractional Brownian motion on the probability space $(\Omega,\mathcal F,\tilde{\mathbb P})$. Therefore, under $\tilde{\mathbb P}$, the process $\tilde X^n$ satisfies
\begin{equation}
	\tilde{X}^n_t=x+\int_{0}^{t}b(\tilde{X}^n_{\eta_n(s)})d s+\tilde{B}_t^H.\nonumber
\end{equation}
This change-of-measure argument enables us to reformulate the small-ball probability problem for the original discretized equation as an equivalent problem under the transformed probability measure $\tilde{\mathbb P}$, where the centered process $\tilde X^n-\phi^n$ becomes a fractional Brownian motion.
\begin{align}\label{all}
	&\mathbb{P}(\|X^n-\phi^n\|\leq\varepsilon)\nonumber\\
	&=\tilde{\mathbb{P}}(\|Y^n-\phi^{n}\|\leq \varepsilon)=\E\Big(\frac{d\tilde{\mathbb{P}}}{d\mathbb{P}}\mathbb{I}_{\|B^H\|\leq\varepsilon})\nonumber\\
	&=\E\Big(\!\exp\!\Big(\!\int_{0}^{1}\eta^n_{s}ds-\frac{1}{2}\int_{0}^{1}|\eta^n_{s}|^2ds\Big)\mathbb{I}_{\|B^H\|\leq\varepsilon}\Big)\nonumber\\
	&=\E\Big(\!\exp\Big(\!\int_{0}^{1}\!\!\Big((K^H)^{-1}\!
	\Big(\!b(\tilde{Y}^n_{\eta_n(u)}\!\Big)(s)-\dot{\phi}^{n}_s\Big)ds
	-\frac{1}{2}\int_{0}^{1}\!\!\Big((K^H)^{-1}\Big(b(\tilde{Y}^n_{\eta_n(u)})\Big)(s)-\dot{\phi}^{n}_s\Big)^2\!ds\!\Big)
	\mathbb{I}_{\|B^H\|\leq\varepsilon}\!\Big)\nonumber\\
	&=\E\Big(\!\exp\Big(\!\int_{0}^{1}\!\big((K^H)^{-1}b(\phi^{n}_{\eta_n(u)}+B_{\eta_n(u)}^H)\big)(s)dW_s+\int_{0}^{1}(-\dot{\phi}^{n}_s)dW_s\nonumber\\
	&\quad+\frac{1}{2}\int_{0}^{1}\big|\dot{\phi}^{n}_s-\big((K^H)^{-1}b(\phi^n_{\eta_n(u)})\big)(s)\big|^2ds
	-\frac{1}{2}\int_{0}^{1}\big|\dot{\phi}^{n}_s-\big((K^H)^{-1}b(\phi^n_{\eta_n(u)})\big)(s)\big|^2ds\nonumber\\
	&\quad
	-\frac{1}{2}\int_{0}^{1}\Big((K^H)^{-1}\Big(b(\tilde{Y}^n_{\eta_n(u)})\Big)(s)-\dot{\phi}^{n}_s\Big)^2ds\Big)\mathbb{I}_{\|B^H\|\leq\varepsilon}\Big)\nonumber\\
	&=\E\Big(\exp(\mathcal{A}^n_1+\mathcal{A}^n_2+\mathcal{A}^n_3+\mathcal{A}^n_4)\mathbb{I}_{\|B^H\|\leq\varepsilon}\Big)\times\exp\Big(-\frac{1}{2}\int_{0}^{1}\big|\dot{\phi}^{n}_s-\big((K^H)^{-1}b(\phi^n_{\eta_n(u)})\big)(s)\big|^2ds\Big),
\end{align}
where
\begin{align}
	\mathcal{A}^n_1&=\int_{0}^{1}\big((K^H)^{-1}b(\phi^{n}_{\eta_n(u)}+B_{\eta_n(u)}^H)\big)(s)dW_s,\nonumber\\
	\mathcal{A}^n_2&=\int_{0}^{1}-\dot{\phi}^{n}_sdW_s,\nonumber\\
	\mathcal{A}^n_3&:=\int_{0}^{1}\dot{\phi}^{n}_s\cdot\Big((K^H)^{-1}\big(b(\phi^{n}_{\eta_n(u)}+B_{\eta_n(u)}^H)-b(\phi^n_{\eta_n(u)})\big)(s)\Big)ds,\nonumber\\
	\mathcal{A}^n_4&:=\frac{1}{2}\int_{0}^{1}\Bigg(\Big(\big((K^H)^{-1}b(\phi^n_{\eta_n(u)})\big)(s)\Big)^2-\Big(\big((K^H)^{-1}b(\phi^{n}_{\eta_n(u)}+B_{\eta_n(u)}^H)\big)(s)\Big)^2\Bigg)ds.\nonumber
\end{align}
\section{The numerical Onsager-Machlup action functional for discretized SDEs}\label{DEVIATION}
In this section, we derive the numerical Onsager-Machlup action functional associated with \eqref{nsde} in both the singular regime $H<\frac12$ and the regular regime $H>\frac12$. Recalling the ratios introduced in \eqref{rotios}, we first present the numerical Onsager-Machlup action functional corresponding to the singular case $H<\frac12$.

\begin{thm}
	\label{th:singular case}
	Let $X^n$ be the solution of discretized SDE \eqref{Euler-main equation} 
	with Hurst index $\frac{1}{4}<H<\frac{1}{2}$. Let $\{\phi^n\}_{n\in\mathbb{N}}$ be a family of numerical reference paths such that $\phi^n-x\in\mathcal{H}^p$ with $p>\frac{1}{H}$ and assume $b\in C^2_b(\mathbb{R})$. 
	Then the numerical Onsager-Machlup action functional of $X^n$ for the norms $\|\cdot\|_{\b}$  with $0<\beta<H-\frac{1}{4}$ 
	and $\|\cdot\|_{\infty}$ exists and is given by
	$$L^n(\phi^n,\dot{\phi}^n)=-\frac{1}{2}\int_{0}^{1}\big|\dot{\phi}^{n}_s-d_H^{-1}s^{-\a}\big(I^{\a}_{0+}u^{\a}b(\phi^n_{\eta_n(u)})\big)(s)\big|^2ds,$$
	where $\a=\frac{1}{2}-H$ and numerical reference path $\phi^n, n\in\mathbb{N}$ is the function such that $K^H\dot{\phi}^n=\phi^n-x$.
\end{thm}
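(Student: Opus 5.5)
We start from the Girsanov decomposition \eqref{all}. It gives
$$\gamma^n_\varepsilon(\phi^n)=\exp\big(L^n(\phi^n,\dot\phi^n)\big)\,\E\Big(\exp\Big(\textstyle\sum_{i=1}^4\mathcal A^n_i\Big)\,\Big|\,\|B^H\|\le\varepsilon\Big),$$
where $L^n$ is identified through the formula $(K^H)^{-1}h=d_H^{-1}s^{-\a}I^{\a}_{0+}s^{\a}h'$ for $H<\frac12$. The usual Onsager--Machlup strategy then applies. It suffices to prove that for every $c\in\mathbb R$ and $i=1,\dots,4$,
$$\limsup_{\varepsilon\to0}\E\big(\exp(c\mathcal A^n_i)\,\big|\,\|B^H\|\le\varepsilon\big)\le1 .$$
Hölder's inequality for conditional expectations then gives the limit $1$ for the product; this is the standard lemma in the Appendix, as in Moret--Nualart. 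We fix $n$ throughout. The small-ball event controls $B^H$ in $\|\cdot\|_\beta$ or $\|\cdot\|_\infty$, and we use this together with $b\in C^2_b$.

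For $\mathcal A^n_3$ and $\mathcal A^n_4$, we Taylor expand $b(\phi^n_{\eta_n(u)}+B^H_{\eta_n(u)})-b(\phi^n_{\eta_n(u)})$. Since $\eta_n(u)$ is piecewise constant, the function $u\mapsto b(\phi^n_{\eta_n(u)}+B^H_{\eta_n(u)})$ is a step function. Hence $I^\a_{0+}u^\a(\cdot)$ is bounded in terms of $\sup_k|B^H_{t_k}|\le\|B^H\|_\infty$, with a constant depending on $n$. So $|(K^H)^{-1}(\cdots)(s)|\le C_n\|B^H\|_\infty s^{-\a}$, and this is square integrable because $\a<\frac12$. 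Both terms are then bounded deterministically by $C_n(\|\dot\phi^n\|_{L^2}+1)\varepsilon$ on the event, which tends to $0$.

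For $\mathcal A^n_2=-\int\dot\phi^n dW$, we use the known fact (Moret--Nualart) that $\int_0^1\dot\phi^n_s\,dW_s$, with $\dot\phi^n\in L^p$ and $p>1/H$, satisfies the required conditional exponential bound under these norms. This follows from symmetry of the Gaussian small-ball conditioning combined with the Anderson inequality/correlation argument.

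The main obstacle is $\mathcal A^n_1$, a stochastic integral of a nonlinear functional. We expand $b(\phi^n_{t_k}+B^H_{t_k})=b(\phi^n_{t_k})+b'(\phi^n_{t_k})B^H_{t_k}+R_k$ with $|R_k|\le C|B^H_{t_k}|^2$. The zeroth-order term is a deterministic Wiener integral and is handled like $\mathcal A^n_2$. The remainder term is controlled by the exponential-martingale trick: its quadratic variation is $O(\varepsilon^4)$ on the event, so $\exp(c\int R\,dW)$ is dominated via Cauchy--Schwarz by $\exp(C\varepsilon^4)$-type bounds.

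The first-order term is the key piece. It has the form $\int_0^1\big((K^H)^{-1}\sum_k b'(\phi^n_{t_k})B^H_{t_k}\mathbb I_{[t_k,t_{k+1})}\big)(s)\,dW_s$. Writing $B^H_{t_k}=\int_0^{t_k}K^H(t_k,r)\,dW_r$, it becomes a double Wiener--Itô integral $\int\!\!\int f_n(s,r)\,dW_r\,dW_s$ with an explicit kernel $f_n$. This kernel involves the discretized double sums over partition intervals, i.e.\ the terms $\mathcal J^n_{12}$ and $\mathcal I^n_{12}$. We will show that the associated integral operator is nuclear (trace class) on $L^2$. We split it into diagonal and off-diagonal blocks and use $\a<\frac12$ together with $H>\frac14$ to ensure the relevant singular kernels have integrable trace. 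We then invoke the Appendix lemma: for a double stochastic integral with nuclear kernel, $\limsup\E(\exp(c\,I_2(f))\,|\,\|B^H\|\le\varepsilon)\le e^{-c\,\mathrm{Tr}f}$, and the trace contribution must be shown to vanish.

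In the continuous case this trace is exactly the divergence term $-\frac12\int b'(\phi_s)\,ds$. Here it vanishes because $B^H_{t_k}$ is evaluated at the left endpoint $t_k\le s$: the kernel $f_n(s,r)$ is supported on $r<\eta_n(s)$, strictly below the diagonal block, so $\int f_n(s,s)\,ds=0$. This explains the absence of the divergence term in $L^n$. Verifying nuclearity with this strictly off-diagonal structure, and checking it is compatible with $\beta<H-\frac14$, is the step we expect to require the most careful estimates.
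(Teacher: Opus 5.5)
Your plan follows the paper's proof term by term: the same Girsanov decomposition, the separation lemma, deterministic $O(\varepsilon)$ bounds for $\mathcal A^n_3,\mathcal A^n_4$, and the Taylor split of $\mathcal A^n_1$ into a Wiener integral, a double Wiener integral with zero trace, and a remainder. \textbf{The gap is in the remainder step.} As you describe it, that step would fail, and it is exactly where the hypotheses $H>\frac14$ and $\beta<H-\frac14$ are used.

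Write $A_\varepsilon=\{\|B^H\|_\beta\le\varepsilon\}$ and let $M^n$ be the remainder martingale. Knowing $\langle M^n\rangle_1\le C\varepsilon^4$ on $A_\varepsilon$ and applying Cauchy--Schwarz against the exponential supermartingale only gives $\mathbb E(e^{cM^n_1}\mathbb I_{A_\varepsilon})\le e^{c^2C\varepsilon^4}\,\mathbb P(A_\varepsilon)^{1/2}$. After dividing by $\mathbb P(A_\varepsilon)$ you are left with a factor $\mathbb P(A_\varepsilon)^{-1/2}\to\infty$.

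The missing idea is a comparison of two exponential rates:
\begin{itemize}
\item the exponential martingale inequality $\mathbb P(|cM^n_1|>\xi,\ A_\varepsilon)\le 2\exp\big(-\xi^2/(2c^2C\varepsilon^4)\big)$;
\item the Kuelbs--Li--Shao small-ball lower bound $\mathbb P(A_\varepsilon)\ge\exp\big(-K_2\varepsilon^{-1/(H-\beta)}\big)$.
\end{itemize}
Insert their ratio into
\[
\mathbb E\big(e^{cM^n_1}\,\big|\,A_\varepsilon\big)\le e^{\delta}\big(1+\mathbb P(|cM^n_1|>\delta\,|\,A_\varepsilon)\big)+\int_\delta^\infty e^{\xi}\,\mathbb P(|cM^n_1|>\xi\,|\,A_\varepsilon)\,d\xi .
\]
The resulting exponents are $-\xi^2/(2c^2C\varepsilon^4)+K_2\varepsilon^{-1/(H-\beta)}$. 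These tend to $-\infty$ precisely when $\frac{1}{H-\beta}<4$, i.e.\ $\beta<H-\frac14$. For $\|\cdot\|_\infty$ the small-ball rate is $\varepsilon^{-1/H}$, so one needs $H>\frac14$. Letting $\varepsilon\to0$ and then $\delta\to0$ gives $\limsup\le 1$.

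Conversely, you place the difficulty, and the use of $\beta<H-\frac14$, in the nuclearity step, where neither is needed. $B^H$ enters only through the finitely many Gaussians $B^H_{t_j}=\int_0^{t_j}K^H(t_j,r)\,dW_r$. So the kernel is a finite sum
\[
f_n(s,r)=\sum_{k}\sum_{j\le k}g_{k,j}(s)\,b'(\phi^n_{t_j})\,K^H(t_j,r)\,\mathbb I_{\{r\le t_j\}},
\]
with each $g_{k,j}\in L^2$ supported in $[t_k,t_{k+1})$. Hence $K(\tilde f_n)$ has finite rank and is trivially nuclear; this is exactly the paper's argument, with no singular-kernel estimates. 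In this form the trace is a finite sum of inner products $\langle g_{k,j},K^H(t_j,\cdot)\mathbb I_{[0,t_j]}\rangle$. Each vanishes because the supports meet in at most one point. This is your ``strictly below the diagonal'' observation in its cleanest form, and it avoids needing a diagonal-integral trace formula for the discontinuous kernel $\tilde f_n$.
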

\begin{proof}
	We will prove the theorem for H$\mathrm{\ddot{o}}$lder norm. The proof is the same for the supremum norm. Recall operator $(K^H)^{-1}$ is defined by
	$$\Big(\big(K^H\big)^{-1}h\Big)(s)=s^{-\alpha}\big(I^{\alpha}_{0+}u^{\alpha}h'\big)(s).$$
	where $\a=\frac{1}{2}-H$ when $H<\frac{1}{2}.$ So we can rewrite small probability \eqref{all}.
	\begin{align}\label{small all}
		&\mathbb{P}(\|X-\phi^{n}\|_{\beta}\leq\varepsilon)\nonumber\\&=\E\Big(\exp(\mathcal{J}^n_1+\mathcal{J}^n_2+\mathcal{J}^n_3+\mathcal{J}^n_4)\mathbb{I}_{\|B^H\|\leq\varepsilon}\Big)\times\exp\Big(-\frac{1}{2}\int_{0}^{1}\big|\dot{\phi}^{n}_s-s^{-\a}\big(I^{\a}_{0+}u^{\a}b(\phi^n_{\eta_n(u)})\big)(s)\big|^2ds\Big),
	\end{align}
	where
	\begin{align}
		\mathcal{J}^n_1&=\int_{0}^{1}d_H^{-1}s^{-\alpha}\big(I^{\alpha}_{0+}u^{\alpha}b(\phi^{n}_{\eta_n(u)}+B_{\eta_n(u)}^H)\big)(s)dW_s,\nonumber\\
		\mathcal{J}^n_2&=\int_{0}^{1}-\dot{\phi}^{n}_sdW_s,\nonumber\\
		\mathcal{J}^n_3&=\int_{0}^{1}d_H^{-1}\dot{\phi}^{n}_s\cdot\Big(s^{-\alpha}\big(I^{\alpha}_{0+}u^{\alpha}\big(b(\phi^{n}_{\eta_n(u)}+B_{\eta_n(u)}^H)-b(\phi^n_{\eta_n(u)})\big)\big)(s)\Big)ds,\nonumber\\\mathcal{J}^n_4&=\frac{1}{2}\int_{0}^{1}\Bigg(\Big(d_H^{-1}s^{-\alpha}\big(I^{\alpha}_{0+}u^{\alpha}b(\phi^n_{\eta_n(u)})\big)(s)\Big)^2-\Big(d_H^{-1}s^{-\a}\big(I^{\alpha}_{0+}u^{\alpha}b(\phi^{n}_{\eta_n(u)}+B_{\eta_n(u)}^H)\big)(s)\Big)^2\Bigg)ds,\nonumber
	\end{align}
	then, by Lemma \ref{Separation lemma}, we could deal with each term independently.
	
	$\clubsuit \ \text{Term}$ \ $\mathcal{J}^n_2$
	
	Applying Theorem \ref{no random function} to $f=-c\dot{\phi}^{n}_s$ and Lemma \ref{norm}, we have
	\begin{align}\label{I_2}
		\limsup _{\varepsilon \rightarrow 0}\ \E(\exp(c\mathcal{J}^n_2)|\|B^H\|_{\beta}<\varepsilon)\leq1,
	\end{align}
	for every real number $c$.
	
	$\clubsuit  \ \text{Term}$ \ $\mathcal{J}^n_3$
	
	So under the condition $\|B^H\|_{\beta}<\varepsilon$, by using the fact $b$ is Lipschitz continuous and bounded with constant $L$, we have that (where $v=\frac{u}{s}$)
	\begin{align}\label{ready1}
		&\Big|s^{-\alpha}\big(I^{\alpha}_{0+}u^{\alpha}\big(b(\phi^{n}_{\eta_n(u)}+B_{\eta_n(u)}^H)-b(\phi^n_{\eta_n(u)})\big)\big)(s)\Big|\nonumber\\&=\frac{1}{\G(\a)}s^{-\alpha}\Bigg|\int_{0}^{s}u^{\alpha}(s-u)^{\a-1}\Big(b(\phi^{n}_{\eta_n(u)}+B_{\eta_n(u)}^H)-b(\phi^n_{\eta_n(u)})\Big)du\Bigg|\nonumber\\&\leq\frac{C_{L}}{\G(\a)} s^{-\alpha}\int_{0}^{s}u^{\alpha}(s-u)^{\a-1}|\eta_n(u)|^{\b}\|B^H\|_{\b}du\nonumber\\&\leq \frac{C_{L}}{\G(\a)} \varepsilon s^{-\alpha}\int_{0}^{s}u^{\alpha}(s-u)^{\a-1}du\nonumber\\&=\frac{C_{L}}{\G(\a)}\varepsilon s^{\alpha}\int_{0}^{1}v^{\alpha}(1-v)^{\a-1}dv=C_{L}\frac{\b(1+\a,\a)}{\G(\a)} s^{\alpha}\varepsilon.
	\end{align}
	We now deal with the term $\mathcal{J}^n_3$.
	\begin{align}
		|\mathcal{J}^n_3|&=\int_{0}^{1}d_H^{-1}\dot{\phi}^{n}_s\cdot\Big(s^{-\alpha}\big(I^{\alpha}_{0+}u^{\alpha}\big(b(\phi^{n}_{\eta_n(u)}+B_{\eta_n(u)}^H)-b(\phi^n_{\eta_n(u)})\big)\big)(s)\Big)ds\nonumber\\&\leq C_{L,H}\frac{\b(1+\a,\a)}{\G(\a)}\varepsilon \int_{0}^{1}s^{\a}|\dot{\phi}^{n}_s|ds\nonumber\\&\leq C_{\a,L}\varepsilon.\nonumber
	\end{align}
	Hence
	\begin{align}\label{I_3}
		\limsup _{\varepsilon \rightarrow 0}\ \E(\exp(c\mathcal{J}^n_3)|\|B^H\|<\varepsilon)\leq1,
	\end{align}
	for every real number $c$.
	
	$\clubsuit \ \text{Term} $\ $\mathcal{J}^n_4$
	
	For the term $\mathcal{J}^n_4$, we have
	\begin{align}
		|\mathcal{J}^n_4|&\leq \frac{1}{2}\int_{0}^{1}\Bigg|\Big(d_H^{-1}s^{-\alpha}\big(I^{\alpha}_{0+}u^{\alpha}b(\phi^n_{\eta_n(u)})\big)(s)\Big)^2-\Big(d_H^{-1}s^{-\a}\big(I^{\alpha}_{0+}u^{\alpha}b(\phi^{n}_{\eta_n(u)}+B_{\eta_n(u)}^H)\big)(s)\Big)^2\Bigg|ds\nonumber\\&\leq \frac{1}{2}\int_{0}^{1}\Bigg(d_H^{-1}s^{-\alpha}\Big(I^{\alpha}_{0+}u^{\alpha}\big(b(\phi^{(n)}_{\eta_n(u)}+B_{\eta_n(u)}^H)-b(\phi^n_{\eta_n(u)})\big)\Big)(s)\Bigg)^2ds\nonumber\\&\quad+\int_{0}^{1}\Bigg|d_H^{-2}s^{-\a}\Big(I^{\a}_{0+}u^{\a}\big(b(\phi^{(n)}_{\eta_n(u)}+B_{\eta_n(u)}^H)-b(\phi^n_{\eta_n(u)})\big)\Big)(s)\cdot s^{-\a}\big(I^{\a}_{0+}u^{\a}b(\phi^n_{\eta_n(u)})\big)(s)\Bigg|ds\nonumber\\\nonumber&:=\mathcal{J}^n_{41}+\mathcal{J}^n_{42}.
	\end{align}
	Using \eqref{ready1} we obtain
	\begin{align}
		|\mathcal{J}^n_{41}|=&\frac{1}{2}\int_{0}^{1}\Bigg(d_H^{-1}s^{-\alpha}\Big(I^{\alpha}_{0+}u^{\alpha}\big(b(\phi^{(n)}_{\eta_n(u)}+B_{\eta_n(u)}^H)-b(\phi^n_{\eta_n(u)})\big)\Big)(s)\Bigg)^2ds\nonumber\\\leq& C_{L,H}\frac{\b^2(1+\a,\a)}{2(2\a+1)\G(\a)^2}\varepsilon^2,\nonumber
	\end{align}
	and from \eqref{fractional integral estimate}, \eqref{ready1} and $b$ is bounded, we have ($p=1,f=s^{\a}b(\phi^n_{\eta_n(u)})$)
	\begin{align}
		|\mathcal{J}^n_{42}|=&\int_{0}^{1}\Bigg|d_H^{-2}s^{-\a}\Big(I^{\a}_{0+}u^{\a}\big(b(\phi^{(n)}_{\eta_n(u)}+B_{\eta_n(u)}^H)-b(\phi^n_{\eta_n(u)})\big)\Big)(s)\cdot s^{-\a}\big(I^{\a}_{0+}u^{\a}b(\phi^n_{\eta_n(u)})\big)(s)\Bigg|ds\nonumber\\\leq&C_{L}\frac{\beta(1+\a,\a)}{\G(\a)}\varepsilon\int_{0}^{1}\big|I^{\a}_{0+}u^{\a}b(\phi^n_{\eta_n(u)})\big|(s)ds\nonumber\\\leq&C_{L,H}\frac{\beta(1+\a,\a)}{\a\G(\a)^2}\varepsilon\int_{0}^{1}s^{\a}|b(\phi^n_{\eta_n(s)})|ds\leq C_{\a,L,H}\varepsilon. \nonumber
	\end{align}
	As a consequence, by Lemma \ref{Separation lemma} we get that
	\begin{align}\label{I_4}
		\limsup _{\varepsilon \rightarrow 0}\ \E(\exp(c\mathcal{J}^n_4)|\|B^H\|<\varepsilon)\leq1,
	\end{align}
	for every real number $c$.
	
	$\clubsuit \ \text{Term}$ \ $\mathcal{J}^n_1$
	
	Applying classical Taylor expansion to $b(\phi^{n}_{\eta_n(s)}+B_{\eta_n(s)}^H)$ at $\phi^n$ we have
	\begin{align}
		b(\phi^{n}_{\eta_n(s)}+B_{\eta_n(s)}^H)=b(\phi^n_s)+b'(\phi^n_{\eta_n(s)})B_{\eta_n(s)}^H+R_s^{n},\nonumber
	\end{align}
	where $R^{n}$ denotes the remainder term. If $\|B^H\|_{\b}\leq \varepsilon$, by Young's inequality we have that
	\begin{align}\label{the bound of remainder term}
		\|R^n\|_{\infty}\leq C_{L}\varepsilon^2.
	\end{align}
	We now rewrite the term $\mathcal{J}^n_1$.
	\begin{align}\label{combo of I_1}
		\mathcal{J}^n_1&=\int_{0}^{1}d_H^{-1}s^{-\alpha}\big(I^{\alpha}_{0+}u^{\alpha}b(\phi^{n}_{\eta_n(u)}+B_{\eta_n(u)}^H)\big)(s)dW_s\nonumber\\&=\int_{0}^{1}d_H^{-1}s^{-\alpha}\Big(I^{\alpha}_{0+}u^{\alpha}\big(b(\phi^n_s)+b'(\phi^n_{\eta_n(s)})B_{\eta_n(s)}^H+R_s^{n}\big)\Big)(s)dW_s\nonumber\\&:=\mathcal{J}^n_{11}+\mathcal{J}^n_{12}+\mathcal{J}^n_{13}.
	\end{align}
	Applying Theorem \ref{no random function} to $f=cd_H^{-1}s^{-\a}\big(I^{\a}_{0+}u^{\a}b(\phi^n_{\eta_n(u)})\big)(s)$ and Lemma \ref{Separation lemma}, we have
	\begin{align}\label{I_{11}}
		\limsup _{\varepsilon \rightarrow 0}\ \E(\exp(c\mathcal{J}^n_{11})|\|B^H\|_{\beta}<\varepsilon)\leq1,
	\end{align}
	for every real number $c$.
	
	To study the limit behavior of the conditional exponential moments of the term $\mathcal{J}^n_{12}$. We will express $\mathcal{J}^n_{12}$ as a double stochastic integral with respect to $W$ based on the integral representation of fractional Brownian motion $B^H$, so we have that
	\begin{align}
	d_H\mathcal{J}^n_{12}&=\int_{0}^{1}s^{-\alpha}\Big(I^{\alpha}_{0+}u^{\alpha}\big(b'(\phi^n_{\eta_n(u)})B^H_{\eta_n(u)}\big)\Big)(s)dW_s\nonumber\\&=\frac{1}{\G(\a)}\int_{0}^{1}s^{-\a}\int_{0}^{s}u^{\a}b'(\phi^n_{\eta_n(u)})B^H_{\eta_n(u)}(s-u)^{\a-1}dudW_s\nonumber\\&=\sum_{k=0}^{n-1}\frac{1}{\G(\a)}\int_{t_k}^{t_{k+1}}s^{-\a}\sum_{j=0}^{k-1}\int_{t_j}^{t_{j+1}}u^{\a}b'(\phi^n_{t_j})B^H_{t_j}(s-u)^{\a-1}dudW_s\nonumber\\&\quad+\sum_{k=0}^{n-1}\frac{1}{\G(\a)}\int_{t_k}^{t_{k+1}}s^{-\a}\int_{t_k}^{s}u^{\a}b'(\phi^n_{t_k})B^H_{t_k}(s-u)^{\a-1}dudW_s\nonumber\\&=\sum_{k=0}^{n-1}\frac{1}{\G(\a)}\int_{t_k}^{t_{k+1}}s^{-\a}\sum_{j=0}^{k-1}\int_{t_j}^{t_{j+1}}u^{\a}b'(\phi^n_{t_j})(s-u)^{\a-1}\int_{0}^{t_j}K^{H}(t_j,r)dW_rdudW_s\nonumber\\&\quad+\sum_{k=0}^{n-1}\frac{1}{\G(\a)}\int_{t_k}^{t_{k+1}}s^{-\a}\int_{t_k}^{s}u^{\a}b'(\phi^n_{t_k})(s-u)^{\a-1}\int_{0}^{t_k}K^{H}(t_k,r)dW_rdudW_s\nonumber\\&=\sum_{k=0}^{n-1}\frac{1}{\G(\a)}\int_{0}^{1}\int_{0}^{1}s^{-\a}\mathbb{I}_{s\in[t_k,t_{k+1})}\sum_{j=0}^{k-1}K^{H}(t_j,r)\int_{t_j}^{t_{j+1}}u^{\a}b'(\phi^n_{t_j})(s-u)^{\a-1}du\mathbb{I}_{r\leq t_j}dW_rdW_s\nonumber\\&\quad+\sum_{k=0}^{n-1}\frac{1}{\G(\a)}\int_{0}^{1}\int_{0}^{1}s^{-\a}\mathbb{I}_{s\in[t_k,t_{k+1})}K^{H}(t_k,r)\int_{t_k}^{s}u^{\a}b'(\phi^n_{t_k})(s-u)^{\a-1}du\mathbb{I}_{r\leq t_k}dW_rdW_s\nonumber\\&=\int_{0}^{1}\int_{0}^{1}f_{n}(s,r)dW_rdW_s=\int_{0}^{1}\int_{0}^{1}\tilde{f}_n(s,r)dW_rdW_s,\nonumber
	\end{align}
	where $\tilde{f}_n$ is the symmetrization of the function $f_n$ and 
	\begin{align}
	f_n(s,r)&=\sum_{k=0}^{n-1}\frac{1}{\Gamma(\a)}s^{-\a}\mathbb{I}_{s\in[t_k,t_{k+1})}\sum_{j=0}^{k-1}K^H(t_j,r)b'(\phi^n_{t_j})\int_{t_j}^{t_{j+1}}u^{\a}(s-u)^{\a-1}du\mathbb{I}_{r\leq t_j}\nonumber\\&\quad+\sum_{k=0}^{n-1}\frac{1}{\Gamma(\a)}s^{-\a}b'(\phi^n_{t_k})K^H(t_k,r)\mathbb{I}_{s\in[t_k,t_{k+1})}\int_{t_k}^{s}u^{\a}(s-u)^{\a-1}du\mathbb{I}_{r\leq t_k}\nonumber\\&:=\mathcal{M}^n_1(s,r)+\mathcal{M}^n_2(s,r).\nonumber
	\end{align}
	By Lemma \ref{singular reace class}, the operator $K(\tilde{f}^n)$ is nuclear. So the trace of this operator can be obtained as
	$$Tr\tilde{f}_n=\int_{0}^{1}\tilde{f}^n(s,s)ds=\frac{1}{2}\int_{0}^{1}f^n(s,s)ds.$$
	Note that the function $\tilde{f}_n$ is not continuous on the axes, but the result of \cite{Bal76} still holds in this case taking into account the particular form of the function $f_n$. We now compute the integral $\int_{0}^{1}f_n(s,s)ds$. 
	
	For the term $\mathcal{M}^n_1(s,r)$, we have the constraint $s\geq t_k, r\leq t_{j-1}$ for $j=0,\cdots,k-1$. If we set $r=s$, then $s\leq t_{j-1}\leq t_{k-1}<t_{k}\leq s$, which is impossible. Therefore,
	$$\mathcal{M}^n_1(s,s)=0.$$

	We now estimate the term $\mathcal M_2^n(s,u)$. Using the change of variables
	$v=\frac{u-t_k}{s-t_k}$, we obtain
	\begin{align}
		\int_{t_k}^{s}u^\alpha(s-u)^{\alpha-1}du
		&=
		(s-t_k)^\alpha
		\int_0^1
		\big(t_k+v(s-t_k)\big)^\alpha
		(1-v)^{\alpha-1}dv .
		\nonumber
	\end{align}
	Hence,
	\begin{align}
		\mathcal{M}^n_2(s,r)
		&=
		\sum_{k=0}^{n-1}
		s^{-\alpha}
		b'(\phi^n_{t_k})
		K^H(t_k,r)
		\mathbb{I}_{s\in[t_k,t_{k+1})}
		\mathbb{I}_{r\leq t_k}
		\nonumber\\
		&\quad\times
		(s-t_k)^\alpha
		\int_0^1
		\big(t_k+v(s-t_k)\big)^\alpha
		(1-v)^{\alpha-1}dv .
		\nonumber
	\end{align}
	Taking $r=s$, the indicator function gives $s\leq t_k$.	On the other hand, since $s\in[t_k,t_{k+1})$, we have $s\geq t_k$. Therefore, $s=r=t_k$.
	Consequently, $\mathcal{M}^n-2(s,s)$ can be nonzero only at the finite set of
	partition points
	\[
	\{t_k,\ k=0,\ldots,n-1\}.
	\]
	Since this set has Lebesgue measure zero, we obtain
	\[
	\int_0^1 \mathcal{M}^n_2(s,s)\,ds=0 .
	\]
	To summarized what we have proved, Lemma \ref{Separation lemma} gives us
	\begin{align}\label{I_{13}}
		\limsup _{\varepsilon \rightarrow 0}\ \E(\exp(\mathcal{J}^n_{12})|\|B^H\|<\varepsilon)=1.
	\end{align}
	Finally, it only remains to study the limit behavior of the term $\mathcal{J}^n_{13}$. For any $c\in \mathbb{R}$ and $\delta>0$ we can write
	\begin{align}
	\E\left(\!\exp(c\mathcal{J}^n_{13})|\|B^H\|\!\leq \varepsilon\!\right)
	\!\leq e^{\delta}
	+\int_{\delta}^{\infty}\!\!e^{\xi}
	\mathbb{P}\left(\!|c\mathcal{J}^n_{13}|>\xi|\|B^H\|_{\b}\!\leq \varepsilon\!\right)d \xi 
	+e^{\delta}\mathbb{P}\left(\!|c\mathcal{J}^n_{13}|>\delta |\|B^H\|_{\b}\!\leq \varepsilon\!\right).\nonumber
\end{align}
	Define the martingale $M^n_t=c\int_{0}^{t}d_H^{-1}s^{-\a}\Big(I^{\alpha}_{0+}u^{\alpha}R_u^{n}\Big)(s)dW_s$ whose quadratic variations can be estimated by \eqref{the bound of remainder term} as follows
	\begin{align}
	\langle M^n \rangle_t=c^2\int_{0}^{t}\Big(d_H^{-1}s^{-\alpha}\Big(I^{\alpha}_{0+}u^{\alpha}R_u^{n}\Big)(s)\Big)^2ds
	\leq \frac{c^2 C_{L,H}^2\b(\a,\a+1)^2}{(1+2\a)(\b(\a))^2}\varepsilon^4=C_{\a,L,H}\varepsilon^4.
\end{align}
	Applying the exponential inequality for martingales, we have
	\begin{align}\label{exponential inequality for martingales1}
		\mathbb{P}\Big(\Big|c\int_{0}^{t}d_H^{-1}s^{-\a}\Big(I^{\alpha}_{0+}u^{\alpha}R_u^{n}\Big)(s)dW_s\Big|>\xi, \|B^H\|_{\b}\leq \varepsilon\Big)\leq \exp\Big(-\frac{\xi^2}{2C_{\a,L,H}\varepsilon^{4}}\Big),
	\end{align}
	for every real number $c$.\\
	Combining Lemma \ref{small ball of holder} and inequality \eqref{exponential inequality for martingales1}, we see that
	\begin{align}
			\mathbb{P}\Big(\Big|c\int_{0}^{1}d_H^{-1}s^{-\a}\Big(I^{\alpha}_{0+}u^{\alpha}R_u^{n}\Big)(s)dW_s\Big|>\xi\Big|\|B^H\|_{\b}\leq \varepsilon\Big)
			\leq \exp\left(-\frac{\xi^2}{2C_{\a,L,H}\varepsilon^{4}}\right)\exp\big(	C_H \varepsilon^{-\frac{1}{H-\b}}\big).\nonumber
		\end{align}
	Using the latter estimate we have for every $\delta>0$ and every $0<\varepsilon<1$
	\begin{align}
	\E&\Big(\exp(c\mathcal{J}^n_{13})|\|B^H\|_{\beta}\leq \varepsilon\Big)\nonumber\\
	&\leq e^{\delta}+\int_{\delta}^{\infty}\exp\Big\{\xi-\frac{\xi^2}{2C_{\a,L,H}\varepsilon^{4}}+C_H \varepsilon^{-\frac{1}{H-\b}}\Big\}d \xi 
	+\exp\Big\{\delta-\frac{\delta^2}{2C_{\a,L,H}\varepsilon^{4}}+C_H \varepsilon^{-\frac{1}{H-\b}}\Big\}.\qquad\quad\;\nonumber
\end{align}
	Letting $\varepsilon$ and then $\delta$ tend to zero, we obtain
	\begin{align}\label{I_{14}}
		\limsup _{\varepsilon \rightarrow 0}\ \E(\exp(c\mathcal{J}^n_{13})|\|B^H\|_{\b}<\varepsilon)\leq1,
	\end{align}
	for every real number $c$. \\
	Finally, we can summarize what we have derived, \eqref{small all},  \eqref{I_2}, \eqref{I_3}, \eqref{I_4}, \eqref{combo of I_1}, \eqref{I_{11}}, \eqref{I_{13}} and \eqref{I_{14}} give us
	$$
	\lim _{\varepsilon \rightarrow 0} \frac{\mathbb{P}(\|X^n-\phi^{n}\|_{\b}<\varepsilon)}{\mathbb{P}(\|B^H\|_{\b}<\varepsilon)}=\exp \left(-\frac{1}{2}\int_{0}^{1}\big|\dot{\phi}^{n}_s-d_H^{-1}s^{-\a}\big(I^{\a}_{0+}u^{\a}b(\phi^n_{\eta_n(u)})\big)(s)\big|^2ds\right).
	$$
	The proof of Theorem \ref{th:singular case} is complete.
\end{proof}
\begin{thm}
	\label{th:regular case}
	Let $X^n$ be the solution of \eqref{Euler-main equation} with Hurst index $\frac{1}{2}<H<1$. Let $\{\phi^n\}_{n\in\mathbb{N}}$ be a family of numerical reference paths such that $\phi^n-x\in\mathcal{H}^2$ and assume $b\in C^2_b(\mathbb{R})$. Then the Onsager-Machlup action functional of $X^n$ for the norms $\|\cdot\|_{\b}$  with $H-\frac{1}{2}<\beta<H-\frac{1}{4}$ exists and is given by
	$$L^n(\phi^n,\dot{\phi}^n)=-\frac{1}{2}\int_{0}^{1}\big|\dot{\phi}^{n}_s-d_H^{-1}s^{\a}\big(D^{\a}_{0+}u^{-\a}b(\phi^n_{\eta_n(u)})\big)(s)\big|^2ds,$$
	where $\a=H-\frac{1}{2}$ and numerical reference path $\phi^n,n\in\mathbb{N}$ is the function such that $K^H\dot{\phi}^n=\phi^n-x$.
\end{thm}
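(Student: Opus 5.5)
The proof will follow the singular case of Theorem \ref{th:singular case} step by step, with the inverse operator now given for $H>\frac12$ by
$$\big((K^H)^{-1}h\big)(s)=d_H^{-1}s^{\a}\big(D^{\a}_{0+}u^{-\a}h'\big)(s),\qquad \a=H-\tfrac12 .$$
Since $h(t)=\int_0^t b(\cdot)\,du$, we have $h'=b(\phi^n_{\eta_n(u)}+B^H_{\eta_n(u)})$. The Girsanov reduction \eqref{all} then gives the factorization
$$\mathbb{P}(\|X^n-\phi^n\|_\b\le\varepsilon)=\E\big(\exp(\mathcal{I}^n_1+\dots+\mathcal{I}^n_4)\mathbb{I}_{\|B^H\|_\b\le\varepsilon}\big)\times\exp(L^n(\phi^n,\dot\phi^n)),$$
where each $\mathcal{I}^n_i$ is the analogue of $\mathcal{J}^n_i$ with $s^{-\a}I^\a_{0+}u^\a$ replaced by $s^{\a}D^{\a}_{0+}u^{-\a}$. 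By Lemma \ref{Separation lemma}, it suffices to show that $\limsup_{\varepsilon\to0}\E(\exp(c\,\mathcal{I}^n_i)\mid\|B^H\|_\b\le\varepsilon)\le1$ for every $c$ and every term. For $\mathcal{I}^n_2$ this follows from Theorem \ref{no random function} with $f=-c\dot\phi^n\in L^2$.

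For $\mathcal{I}^n_3$, $\mathcal{I}^n_4$ and the remainder $\mathcal{I}^n_{13}$, we need a deterministic bound of order $\varepsilon$ for $s^{\a}D^{\a}_{0+}u^{-\a}g$, where $g(u)=b(\phi^n_{\eta_n(u)}+B^H_{\eta_n(u)})-b(\phi^n_{\eta_n(u)})$. Here the argument differs from the singular case. We use the Weyl representation \eqref{weyl representation}, which splits the derivative into $g(s)s^{-2\a}$ plus $\a s^{\a}\int_0^s\big(s^{-\a}g(s)-u^{-\a}g(u)\big)(s-u)^{-\a-1}du$. The difficulty is that $g$ is piecewise constant, so it jumps at the grid points $t_k$, and its jumps are of size $\lesssim|B^H_{t_k}-B^H_{t_{k-1}}|\le \varepsilon n^{-\b}$. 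Since $\a<1$, a jump of bounded size contributes only a singularity $(s-t_k)^{-\a}$, which is square integrable when $\a<\frac12$; this holds because $H<1$. Summing over the $n$ grid points, with $n$ fixed, gives a bound $C_n\varepsilon\,\psi(s)$ with $\psi\in L^2$. The restriction $\b>H-\frac12$ is used for the continuous part, namely the $u^{-\a}$ weight and $|B^H_{\eta_n(u)}|\le\varepsilon\eta_n(u)^\b$, which control the singularity at $0$. The same estimate with $\varepsilon^2$ in place of $\varepsilon$ handles the Taylor remainder $R^n$, so the exponential martingale inequality combined with Lemma \ref{small ball of holder} (using $4>\frac1{H-\b}$, i.e. $\b<H-\frac14$) disposes of $\mathcal{I}^n_{13}$ exactly as in the singular case. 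Then $\mathcal{I}^n_3,\mathcal{I}^n_4$ are $O(\varepsilon)$ by Cauchy--Schwarz, using $\dot\phi^n\in L^2$ and $(K^H)^{-1}b(\phi^n_{\eta_n})\in L^2$ by the same jump analysis. The term $\mathcal{I}^n_{11}$ is deterministic and is handled by Theorem \ref{no random function}.

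The main obstacle is the linear term $\mathcal{I}^n_{12}=d_H^{-1}\int_0^1 s^{\a}D^\a_{0+}\big(u^{-\a}b'(\phi^n_{\eta_n(u)})B^H_{\eta_n(u)}\big)(s)\,dW_s$. I would write $B^H_{t_j}=\int_0^{t_j}K^H(t_j,r)dW_r$ and expand the fractional derivative over the grid cells, computed explicitly on each cell since the integrand is $u^{-\a}$ times a constant. This gives a double Wiener integral $\int\int f_n(s,r)\,dW_r\,dW_s$ whose kernel carries the indicators $\mathbb{I}_{s\in[t_k,t_{k+1})}\mathbb{I}_{r\le t_j}$ with $j\le k$. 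Nuclearity of $K(\tilde f_n)$ is supplied by the regular-case analogue of Lemma \ref{singular reace class}, whose trace-class check (Hilbert--Schmidt factorization through the smooth kernel $K^H(t_j,\cdot)$ times the $s$-dependent factors) is where the care lies. The diagonal vanishes except on the null set $\{t_k\}$, so the trace is $0$ and Lemma \ref{Separation lemma} gives limit $1$. Combining all the terms yields the stated $L^n$.
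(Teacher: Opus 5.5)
Your proposal is correct and follows the paper's proof essentially step for step: the same Girsanov factorization into $\mathcal{I}^n_1,\dots,\mathcal{I}^n_4$ combined through the separation lemma, and Weyl-representation estimates for $\mathcal{I}^n_3$, $\mathcal{I}^n_4$ and the remainder $\mathcal{I}^n_{13}$, the last closed by the exponential martingale inequality and the H\"older small-ball bound, which is where $\b<H-\frac14$ enters; $\mathcal{I}^n_{12}$ is likewise a finite-rank double Wiener integral with vanishing trace. Where you deviate you are more careful than the paper: the jumps of the piecewise-constant integrand at the nodes do produce singularities of order $(s-\eta_n(s))^{-\a}$, so only your $L^2$ bound holds (square-integrable since $\a<\frac12$), not the uniform-in-$s$ bound the paper claims for $\mathcal{I}^n_{32}$, and the $L^2$ bound is all that is needed; note, though, that the jumps are $O(\varepsilon)$ simply because $|b(\phi^n_{\eta_n(u)}+B^H_{\eta_n(u)})-b(\phi^n_{\eta_n(u)})|\le L\varepsilon$, not because they are $\lesssim|B^H_{t_k}-B^H_{t_{k-1}}|$ (the increment of $\phi^n$ also enters), and that the same argument with $|R^n|\le C\varepsilon^2$ handles the remainder using only $b\in C^2_b$.
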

\begin{proof}
	Recall operator $(K^H)^{-1}$ is defined by
	$$\Big(\big(K^H\big)^{-1}h\Big)(s)=d_H^{-1}s^{\alpha}\big(D^{\alpha}_{0+}u^{-\alpha}h'\big)(s),$$
	where $\a=H-\frac{1}{2}$ when $H>\frac{1}{2}.$ So we can rewrite small probability \eqref{all} as
	\begin{align}\label{small all 2}
		&\mathbb{P}(\|X^n-\phi^{n}\|_{\beta}\leq\varepsilon)\nonumber\\
		&=\E\left(\exp(\mathcal{I}^n_1+\mathcal{I}^n_2+\mathcal{I}^n_3+\mathcal{I}^n_4)\mathbb{I}_{\|B^H\|\leq\varepsilon}\right)
		\times\exp\left(-\frac{1}{2}\int_{0}^{1}\big|\dot{\phi}^{n}_s-d_H^{-1}s^{\a}\big(D^{\a}_{0+}u^{-\a}b(\phi^n_{\eta_n(u)})\big)(s)\big|^2ds\right),
	\end{align}
	where
	\begin{align}
		\mathcal{I}^n_1&=\int_{0}^{1}d_H^{-1}s^{\alpha}\big(D^{\alpha}_{0+}u^{-\alpha}b(\phi^{n}_{\eta_n(u)}+B_{\eta_n(u)}^H)\big)(s)dW_s,\nonumber\\
		\mathcal{I}_2&=\int_{0}^{1}-\dot{\phi}^{n}_sdW_s,\nonumber\\
		\mathcal{I}^n_3&=\int_{0}^{1}d_H^{-1}\dot{\phi}^{n}_s\cdot\Big(s^{\alpha}\big(D^{\alpha}_{0+}u^{-\alpha}\big(b(\phi^{n}_{\eta_n(u)}+B_{\eta_n(u)}^H)-b(\phi^n_{\eta_n(u)})\big)\big)(s)\Big)ds,\nonumber\\\mathcal{I}^n_4&=\frac{1}{2}\int_{0}^{1}\Bigg(\Big(s^{\alpha}\big(D^{\alpha}_{0+}u^{\alpha}b(\phi^n_{\eta_n(u)})\big)(s)\Big)^2-\Big(s^{\a}\big(D^{\alpha}_{0+}u^{\alpha}b(\phi^{n}_{\eta_n(u)}+B_{\eta_n(u)}^H)\big)(s)\Big)^2\Bigg)ds.\nonumber
	\end{align}
	Then, we could deal with each term independently by applying Lemma \ref{Separation lemma}.
	
	$\clubsuit \ \text{Term}$ \ $\mathcal{I}^n_2$
	
	Applying Theorem \ref{no random function} to $f=-c\dot{\phi}^{n}_s$ and Lemma \ref{norm}, we have
	\begin{align}\label{J_2}
		\limsup _{\varepsilon \rightarrow 0}\ \E(\exp(c\mathcal{I}^n_2)|\|B^H\|_{\beta}<\varepsilon)\leq1,
	\end{align}
	for every real number $c$.
	
	$\clubsuit \ \text{Term}$ \ $\mathcal{I}^n_3$
	
	Using the Weyl representation \eqref{weyl representation} for the fractional derivative, we have that
	\begin{align}
		\Big|&s^{\alpha}\big(D^{\alpha}_{0+}u^{-\alpha}\big(b(\phi^{n}_{\eta_n(u)}+B_{\eta_n(u)}^H)-b(\phi^n_{\eta_n(u)})\big)\big)(s)\Big|\nonumber\\&=\frac{1}{\G(1-\a)}\Bigg|\frac{b(\phi^{n}_{\eta_n(s)}+B_{\eta_n(s)}^H)-b(\phi^n_{\eta_n(s)})\big)}{s^{\a}}\nonumber\\&\quad+\a s^{\a}\int_{0}^{s}\frac{s^{-\a}\big(b(\phi^{n}_{\eta_n(s)}+B_{\eta_n(s)}^H)-b(\phi^n_{\eta_n(s)})-r^{-\a}\big(b(\phi^{n}_{\eta_n(r)}+B_{\eta_n(r)}^H)-b(\phi^n_{\eta_n(r)})\big)}{(s-r)^{\a+1}}dr\Bigg|\nonumber\\&\leq \mathcal{I}^n_{31}+\mathcal{I}^n_{32}+\mathcal{I}^n_{33},\nonumber
	\end{align}
	where
	\begin{align}
		\mathcal{I}^n_{31}&=\frac{1}{\G(1-\a)}\Big|\frac{b(\phi^{n}_{\eta_n(s)}+B_{\eta_n(s)}^H)-b(\phi^n_{\eta_n(s)})\big)}{s^{\a}}\Big|,\nonumber\\\mathcal{I}^n_{32}&=\frac{\a}{\Gamma(1-\a)}\Bigg|\int_{0}^{s}\frac{\big(b(\phi^{n}_{\eta_n(s)}+B_{\eta_n(s)}^H)-b(\phi^n_{\eta_n(s)})-\big(b(\phi^{n}_{\eta_n(r)}+B_{\eta_n(r)}^H)-b(\phi^n_{\eta_n(r)})\big)}{(s-r)^{\a+1}}dr\Bigg|,\nonumber\\\mathcal{I}^n_{33}&=\frac{\a s^{\a}}{\G(1-\a)}\Bigg|\int_{0}^{s}\frac{s^{-\a}-r^{-\a}}{(s-r)^{\a+1}}\big(b(\phi^{n}_{\eta_n(r)}+B_{\eta_n(r)}^H)-b(\phi^n_{\eta_n(r)})\big)dr\Bigg|.\nonumber
	\end{align}
	So under the condition $\|B^H\|_{\beta}<\varepsilon$, note that $\eta_n(s)\leq s, s\in [0,1]$ and using the fact that $b$ is Lipschitz continuous and bounded with constant $L$, we have that ($\b>\a$)
	\begin{align}\label{J_{31}}
		\mathcal{I}^n_{31}\leq \frac{L}{\Gamma(1-\alpha)}\frac{\eta_n(s)^{\b}}{s^{\a}}\|B\|_{\b}\leq C_{\a,\b,L}\varepsilon.\qquad\qquad
	\end{align}
	On the other hand, from ($v=\frac{r}{s}$)
	\begin{align}\label{integral transform}
		\int_{0}^{s}\frac{r^{-\a}-s^{-\a}}{(s-r)^{\a+1}}dr=s^{-2\a}\int_{0}^{1}\frac{v^{-\a}-1}{(1-v)^{\a+1}}dv:=C_{\a}s^{-2\a},
	\end{align}
	where
	$C_{\a}$ is a constant depending on $\a$. So we easily obtain ($\eta_n(r)\leq r\leq s$)
	\begin{align}\label{J_{33}}
		\mathcal{I}^n_{33}&\leq \frac{\a s^{\a}L}{\G(1-\a)}\int_{0}^{s}\frac{r^{-\a}-s^{-\a}}{(s-r)^{\a+1}}\big|B_{\eta_n(r)}^H\big|dr\nonumber\\&\leq \frac{\a s^{\a}L}{\G(1-\a)}\int_{0}^{s}\frac{r^{-\a}-s^{-\a}}{(s-r)^{\a+1}}|\eta_n(r)|^{\b}dr\|B^H\|_{\b}\nonumber\\&\leq \frac{C_{\a}\a s^{\b-\a}L}{\G(1-\a)}\|B^H\|_{\b}\leq C_{\a,\b,L}\varepsilon.
	\end{align}
	It remains to study the limit behavior of the term $\mathcal{I}^n_{32}$. To begin with, we give an integral equality.
	\begin{align}
		&\big(b(\phi^{n}_{\eta_n(s)}+B_{\eta_n(s)}^H)-b(\phi^n_{\eta_n(s)})-\big(b(\phi^{n}_{\eta_n(r)}+B_{\eta_n(r)}^H)-b(\phi^n_{\eta_n(r)})\big)\nonumber\\
		&=\int_{0}^{1}b'(\lambda B_{\eta_n(s)}^H+\phi^n_{\eta_n(s)})d\lambda  B^H_{\eta_n(s)}-\int_{0}^{1}b'(\lambda B_{\eta_n(r)}^H+\phi^n_{\eta_n(r)})d\lambda  B^H_{\eta_n(r)}\qquad\nonumber\\
		&=\int_{0}^{1}\Big(b'(\lambda B_{\eta_n(s)}^H+\phi^n_{\eta_n(s)})-b'(\lambda B_{\eta_n(r)}^H+\phi^n_{\eta_n(r)})\Big)d\lambda \cdot B^H_{\eta_n(s)}\nonumber\\
		&\quad+\int_{0}^{1}b'(\lambda B_{\eta_n(r)}^H+\phi^n_{\eta_n(r)})d\lambda\Big(B^H_{\eta_n(s)}-B^H_{\eta_n(r)}\Big).\nonumber
	\end{align}
	Then we shall further use that $b'\in C^2_b(\mathbb{R}$ are Lipschitz with constant $L'$, and $\phi^n$ is $H$-H$\mathrm{\ddot{o}}$lder continuous yields
	\begin{align}
		&\big(b(\phi^{n}_{\eta_n(s)}+B_{\eta_n(s)}^H)-b(\phi^n_{\eta_n(s)})-\big(b(\phi^{n}_{\eta_n(r)}+B_{\eta_n(r)}^H)-b(\phi^n_{\eta_n(r)})\big)\nonumber\\&\leq \frac{L'}{2}\big|B_{\eta_n(s)}^H-B_{\eta_n(r)}^H\big|\big|B_{\eta_n(s)}^H\big|+L'|\phi^n_{\eta_n(s)}-\phi^n_{\eta_n(r)}|\big|B_{\eta_n(s)}^H\big|\nonumber\\&\quad+\|b'\|_{\infty}\big|B_{\eta_n(s)}^H-B_{\eta_n(r)}^H\big|\nonumber\\&\leq \frac{L'}{2}\|B^H\|_{\b}^2|\eta_n(s)-\eta_n(r)|^{\b}|\eta_n(s)|^{\b}+L'\|\phi^n\|_{H}|\eta_n(s)-\eta_n(r)|^{H}|\eta_n(s)|^{\b}\nonumber\\&\quad+\|b'\|_{\infty}\|B^H\|_{\b}|\eta_n(s)-\eta_n(r)|^{\b}\nonumber.
	\end{align}
	Therefore we have 
	\begin{align}\label{J_{32}}
		\mathcal{I}^n_{32}&=\frac{\a}{\Gamma(1-\a)}\Bigg|\int_{0}^{s}\frac{\big(b(\phi^{n}_{\eta_n(s)}+B_{\eta_n(s)}^H)-b(\phi^n_{\eta_n(s)})-\big(b(\phi^{n}_{\eta_n(r)}+B_{\eta_n(r)}^H)-b(\phi^n_{\eta_n(r)})\big)}{(s-r)^{\a+1}}dr\Bigg|\nonumber\\&\leq C_{\a,L'}\Big[\int_0^s\frac{|\eta_n(s)-\eta_n(r)|^{\b}|\eta_n(s)|^{\b}+|\eta_n(s)-\eta_n(r)|^{H}|\eta_n(s)|^{\b}+|\eta_n(s)-\eta_n(r)|^{\b}}{(s-r)^{\a+1}}dr\Big]\|B^H\|_{\b}\nonumber\\&\leq C_{\a,\b,L'}\int_0^s \frac{|\eta_n(s)-\eta_n(r)|^{H}}{(s-r)^{\alpha+1}}dr\varepsilon\nonumber\\&\leq C_{\a,\b,H,n,L'}\varepsilon,
	\end{align}
	where the last inequality above utilizes the following inequality. If $s,r\in[t_{k-1},t_{k})$ for some $k\in\{1,\cdots,n-1\}$, $\eta_n(s)=\eta_n(r)=t_{k-1}=0$, so we only need to consider the case $r\in[t_{l-1},t_{l})$ for some $l\in\{1,\cdots,n-1\}$ and $l\neq k$, we have that
	\begin{align}
		&\int_0^s \frac{|\eta_n(s)-\eta_n(r)|^{H}}{(s-r)^{H+1/2}}d r\nonumber\\
		&=\int_0^s \frac{|t_{k-1}-t_{l-1}|^{H}}{(s-r)^{H+1/2}}d r\nonumber\\
		&\leq \int_0^s \Big|\frac{t_{k-1}-t_{l-1}}{s-t_{l-1}}\Big|^{H}(s-r)^{-1/2}d r\nonumber\\
		&\leq C\int_0^s(s-r)^{-1/2}d r<\infty.\nonumber
	\end{align}
	Hence, it follows from estimates \eqref{J_{31}}, \eqref{J_{33}} and \eqref{J_{32}} that
	\begin{align}\label{the following need}
		\Big|s^{\alpha}\big(D^{\alpha}_{0+}u^{-\alpha}\big(b(\phi^{n}_{\eta_n(u)}+B_{\eta_n(u)}^H)-b(\phi_{\eta_n(u)})\big)\big)(s)\Big|\leq C_{\a,\b,H,n,L'}\varepsilon.
	\end{align}
	Therefore,
	\begin{align}\label{J_3}
		\limsup _{\varepsilon \rightarrow 0}\ \E(\exp(c\mathcal{I}^n_3)|\|B^H\|_{\beta}<\varepsilon)\leq1,
	\end{align}
	for every real number $c$.
	
	$\clubsuit \ \text{Term}$ \ $\mathcal{I}^n_4$\\
	By inequality $|a^2-b^2|\leq (a-b)^2+2|(a-b)b|$, we have
	\begin{align}
		|\mathcal{I}^n_4|&=\frac{1}{2}\int_{0}^{1}\Bigg|\Big(d_H^{-1}s^{\alpha}\big(D^{\alpha}_{0+}u^{\alpha}b(\phi_{\eta_n(u)})\big)(s)\Big)^2-\Big(d_H^{-1}s^{\a}\big(D^{-\alpha}_{0+}u^{\alpha}b(\phi^{n}_{\eta_n(u)}+B_{\eta_n(u)}^H)\big)(s)\Big)^2\Bigg|ds\nonumber\\&\leq \frac{1}{2}\int_{0}^{1}\Big(d_H^{-1}s^{\a}\big(D^{\alpha}_{0+}u^{\alpha}(b(\phi^{n}_{\eta_n(u)}+B_{\eta_n(u)}^H)-b(\phi^n_{\eta_n(u)}))\big)(s)\Big)^2ds\nonumber\\&\quad+\int_{0}^{1}\Big|d_H^{-2}\Big(s^{\a}\big(D^{\alpha}_{0+}u^{\alpha}(b(\phi^{n}_{\eta_n(u)}+B_{\eta_n(u)}^H)-b(\phi^n_{\eta_n(u)}))\big)(s)\Big)s^{\a}D^{\a}_{0+}s^{-\a}b(\phi^n_{\eta_n(u)})(s)\Big|ds.\nonumber
	\end{align}
	Using \eqref{the following need} it is easy to see that
	$$|\mathcal{I}^n_4|\leq C_{\a,\b,H,L'}\varepsilon.$$
	As a consequence,
	\begin{align}\label{J_4}
		\limsup _{\varepsilon \rightarrow 0}\ \E(\exp(c\mathcal{I}^n_4)|\|B^H\|_{\beta}<\varepsilon)\leq1,
	\end{align}
	for every real number $c$.
	
	$\clubsuit \ \text{Term}$ \ $\mathcal{I}^n_1$\\
	Applying classical Taylor expansion to $b(\phi^{n}_{\eta_n(s)}+B_{\eta_n(s)}^H)$ at $\phi^n$ we have
	\begin{align}
		b(\phi^{n}_{\eta_n(s)}+B_{\eta_n(s)}^H)=b(\phi^n_{\eta_n(s)})+b'(\phi^n_{\eta_n(s)})B_{\eta_n(s)}^H+R_s^{n},\nonumber
	\end{align}
	where $R^{n}$ denotes the remainder term. If $\|B^H\|_{\b}\leq \varepsilon$, by Young's inequality we have that
	\begin{align}\label{estimate of Remainder term}
		\|R^{n}\|_{\infty}\leq C_{\b}\varepsilon^2.
	\end{align}
	Hence, we can rewrite the term $\mathcal{I}^n_1$.
	\begin{align}\label{the part of J_1}
		\mathcal{I}^n_1&=\int_{0}^{1}d_H^{-1}s^{\alpha}\big(D^{\alpha}_{0+}u^{-\alpha}b(\phi^{n}_{\eta_n(u)}+B_{\eta_n(u)}^H)\big)(s)dW_s\nonumber\\&=\int_{0}^{1}d_H^{-1}s^{\alpha}\Big(D^{\alpha}_{0+}u^{-\alpha}\big(b(\phi^n_{\eta_n(u)})+b'(\phi^n_{\eta_n(u)})B_{\eta_n(u)}^H+R_u^{n}\big)\Big)(s)dW_s\nonumber\\&:=\mathcal{I}^n_{11}+\mathcal{I}^n_{12}+\mathcal{I}^n_{13}.
	\end{align}
	Applying Theorem \ref{no random function} to $f=cd_H^{-1}s^{\a}\big(D^{\a}_{0+}u^{-\a}b(\phi^n_{\eta_n(u)})\big)(s)$ and Lemma \ref{norm}, we have
	\begin{align}\label{J_{11}}
		\limsup _{\varepsilon \rightarrow 0}\ \E(\exp(c\mathcal{I}^n_{11})|\|B^H\|_{\beta}<\varepsilon)\leq1,
	\end{align}
	for every real number $c$.\\
	To study the limit behavior of the conditional exponential moments of the term $\mathcal{I}^n_{12}$. We will express $\mathcal{I}^n_{12}$ as a double stochastic integral with respect to $W$ based on the Weyl representation of the fractional derivative and the integral representation of fractional Brownian motion $B^H$, so we have
	\begin{align}
		d_H\mathcal{I}^n_{12}&=\int_{0}^{1}s^{\alpha}\Big(D^{\alpha}_{0+}u^{-\alpha}\big(b'(\phi^n_{\eta_n(u)})B^H_{\eta_n(u)}\big)\Big)(s)dW_s\nonumber\\&=\frac{1}{\G(1-\a)}\!\int_{0}^{1}\!\Big(s^{-\a}b'(\phi^n_{\eta_n(s)})B_{\eta_n(s)}^H\!+\!\a s^{\a}\!\int_{0}^{s}\!\frac{s^{-\a}b'(\phi^n_{\eta_n(s)})B_{\eta_n(s)}^H-r^{-\a}b'(\phi^n_{\eta_n(r)})B_{\eta_n(r)}^H}{(s-r)^{\a+1}}dr\!\Big)dW_s\nonumber\\&=\sum_{k=0}^{n-1}\frac{1}{\G(1-\a)}\int_{t_k}^{t_{k+1}}\Big(s^{-\a}b'(\phi^n_{t_k})B_{t_k}^H+\a s^{\a}\sum_{j=0}^{k-1}\int_{t_j}^{t_{j+1}}\frac{s^{-\a}b'(\phi^n_{t_k})B_{t_k}^H-r^{-\a}b'(\phi^n_{t_j})B_{t_j}^H}{(s-r)^{\a+1}}dr\nonumber\\&\quad+\a s^{\a}\int_{t_k}^{s}\frac{s^{-\a}b'(\phi^n_{t_k})B_{t_k}^H-r^{-\a}b'(\phi^n_{t_k})B_{t_k}^H}{(s-r)^{\a+1}}dr\Big)dW_s\nonumber\\&=\sum_{k=0}^{n-1}\frac{1}{\G(1-\a)}\int_{t_k}^{t_{k+1}}\Big(s^{-\a}b'(\phi^n_{t_k})\int_0^{t_k}K^H(t_k,u)dW_u+\a s^{\a}\nonumber\\&\quad\cdot\sum_{j=0}^{k-1}\int_{t_j}^{t_{j+1}}\frac{s^{-\a}b'(\phi^n_{t_k})\int_0^{t_k}K^H(t_k,u)dW_u-r^{-\a}b'(\phi^n_{t_j})\int_0^{t_j}K^H(t_j,u)dW_u}{(s-r)^{\a+1}}dr\nonumber\\&\quad+\a s^{\a}b'(\phi^n_{t_k})\int_0^{t_k}K^H(t_k,u)dW_u\int_{t_k}^{s}\frac{s^{-\a}-r^{-\a}}{(s-r)^{\a+1}}dr\Big)dW_s\nonumber\\&=\int_{0}^{1}\int_{0}^{1}\sum_{k=0}^{n-1}\frac{1}{\G(1-\a)}\mathbb{I}_{s\in[t_k,t_{k+1})}\Big(s^{-\a}b'(\phi^n_{t_k})K^H(t_k,u)\mathbb{I}_{u\leq t_k}+\a s^{\a}\nonumber\\&\quad\cdot\sum_{j=0}^{k-1}\int_{t_j}^{t_{j+1}}\frac{s^{-\a}b'(\phi^n_{t_k})K^H(t_k,u)\mathbb{I}_{u\leq t_k}-r^{-\a}b'(\phi^n_{t_j})K^H(t_j,u)\mathbb{I}_{u\leq t_j}}{(s-r)^{\a+1}}dr\nonumber\\&\quad+\a s^{\a}b'(\phi^n_{t_k})K^H(t_k,u)\mathbb{I}_{u\leq t_k}\int_{t_k}^{s}\frac{s^{-\a}-r^{-\a}}{(s-r)^{\a+1}}dr\Big)dW_udW_s\nonumber\\&=\int_{0}^{1}\int_{0}^{1}f_{n}(s,u)dW_udW_s=\int_{0}^{1}\int_{0}^{1}\tilde{f}_n(s,u)dW_udW_s,\nonumber
	\end{align}
	where $\tilde{f}$ is the symmetrization of the function ($\tilde f(s,u)=\tilde f(u,s)$)
	\begin{align}
		f_n(s,u)&=\sum_{k=0}^{n-1}\frac{1}{\G(1-\a)}\mathbb{I}_{s\in[t_k,t_{k+1})}\Bigg[s^{-\a}b'(\phi^n_{t_k})K^H(t_k,u)\mathbb{I}_{u\leq t_k}+\a s^{\a}\nonumber\\
		&\quad\cdot\sum_{j=0}^{k-1}\int_{t_j}^{t_{j+1}}\frac{s^{-\a}b'(\phi^n_{t_k})K^H(t_k,u)\mathbb{I}_{u\leq t_k}-r^{-\a}b'(\phi^n_{t_j})K^H(t_j,u)\mathbb{I}_{u\leq t_j}}{(s-r)^{\a+1}}dr\qquad\qquad\quad\qquad\nonumber\\
		&\quad+\a s^{\a}b'(\phi^n_{t_k})K^H(t_k,u)\mathbb{I}_{u\leq t_k}\int_{t_k}^{s}\frac{s^{-\a}-r^{-\a}}{(s-r)^{\a+1}}dr\Bigg].\nonumber
	\end{align}
	By Lemma \ref{regular trace class}, the operator $K(\tilde{f}_n)$ is nuclear. So the trace of this operator can be obtained as
	$$Tr\tilde{f}_n=\int_{0}^{1}\tilde{f}_n(s,s)ds=\frac{1}{2}\int_{0}^{1}f_n(s,s)ds.$$
	In order to compute the integral $\int_{0}^{1}f_n(s,s)ds.$ Let us rewrite $f_n(s,u)$ as
	\begin{align}
		f_n(s,u)&=\sum_{k=0}^{n-1}\frac{1}{\G(1-\a)}\mathbb{I}_{s\in[t_k,t_{k+1})}\Bigg[s^{-\a}b'(\phi^n_{t_k})K^H(t_k,u)\mathbb{I}_{u\leq t_k}+\a s^{\a}\nonumber\\&\quad\cdot\sum_{j=0}^{k-1}\int_{t_j}^{t_{j+1}}\frac{s^{-\a}b'(\phi^n_{t_k})K^H(t_k,u)\mathbb{I}_{u\leq t_k}-r^{-\a}b'(\phi^n_{t_j})K^H(t_j,u)\mathbb{I}_{u\leq t_j}}{(s-r)^{\a+1}}dr\nonumber\\&\quad+\a s^{\a}b'(\phi^n_{t_k})K^H(t_k,u)\mathbb{I}_{u\leq t_k}\int_{t_k}^{s}\frac{s^{-\a}-r^{-\a}}{(s-r)^{\a+1}}dr\Bigg]\nonumber\\&=\sum_{k=0}^{n-1}\frac{1}{\G(1-\a)}\mathbb{I}_{s\in[t_k,t_{k+1})}s^{-\a}b'(\phi^n_{t_k})K^H(t_k,u)\mathbb{I}_{u\leq t_k}\nonumber\\&\quad+\sum_{k=0}^{n-1}\frac{1}{\G(1-\a)}\mathbb{I}_{s\in[t_k,t_{k+1})}\a s^{\a}\sum_{j=0}^{k-1}\int_{t_j}^{t_{j+1}}\frac{s^{-\a}b'(\phi^n_{t_k})K^H(t_k,u)\mathbb{I}_{u\leq t_k}-r^{-\a}b'(\phi^n_{t_j})K^H(t_j,u)\mathbb{I}_{u\leq t_j}}{(s-r)^{\a+1}}dr\nonumber\\&\quad+\sum_{k=0}^{n-1}\frac{1}{\G(1-\a)}\mathbb{I}_{s\in[t_k,t_{k+1})}\a s^{\a}b'(\phi^n_{t_k})K^H(t_k,u)\mathbb{I}_{u\leq t_k}\int_{t_k}^{s}\frac{s^{-\a}-r^{-\a}}{(s-r)^{\a+1}}dr\nonumber\\&:=\mathcal{P}^n_1(s,u)+\mathcal{P}^n_2(s,u)+\mathcal{P}^n_3(s,u).\nonumber
	\end{align}
	For $s\in[t_k,t_{k+1})$, in the expression of $\mathcal{P}^n_1(s,u)$, we have the constraint $u\leq t_k$. If we set $u=s$, then $u\leq t_k$ implies $s\leq t_k$. On the other hand, since $s\in[t_k,t_{k+1})$, we also have $s\geq t_k$. Hence $s=t_k$, and thus $u=s=t_k$. Therefore,
	 $$\mathcal{P}^n_1(s,s)=\sum_{k=0}^{n-1}\frac{1}{\G(1-\a)}s^{-\a}b'(\phi^n_{t_k})K^H(t_k,t_k).$$
	Since $K^H(t,t)=0$ for the Volterra regular kernel of fractional Brownian motion, we conclude $\mathcal{P}^n_1(s,s)=0$.
	
	We now estimate the term $\mathcal P_2^n(s,u)$. Define
	$$
	A_k(u):=
	b'(\phi^n_{t_k})
	K^H(t_k,u)
	\mathbb I_{\{u\le t_k\}}.
	$$
	Then we can rewrite $\mathcal P_2^n(s,u)$ as
	\begin{align}\label{dep of Pc2}
		\mathcal{P}^n_2(s,u)&=\sum_{k=0}^{n-1}\frac{1}{\G(1-\a)}\mathbb{I}_{s\in[t_k,t_{k+1})}\ s^{\a}\sum_{j=0}^{k-1}\int_{t_j}^{t_{j+1}}\frac{s^{-\a}A_k(u)-r^{-\a}A_j(u)}{(s-r)^{\a+1}}dr\qquad\qquad\qquad\quad\nonumber\\
		&=\sum_{k=0}^{n-1}\frac{1}{\G(1-\a)}\mathbb{I}_{s\in[t_k,t_{k+1})}\ s^{\a}\sum_{j=0}^{k-1}\int_{t_j}^{t_{j+1}}\frac{r^{-\a}(A_k(u)-A_j(u))}{(s-r)^{\a+1}}dr\nonumber\\&\quad+\sum_{k=0}^{n-1}\frac{1}{\G(1-\a)}\mathbb{I}_{s\in[t_k,t_{k+1})}\ s^{\a}\sum_{j=0}^{k-1}\int_{t_j}^{t_{j+1}}\frac{(s^{-\a}-r^{-\a})A_k(u)}{(s-r)^{\a+1}}dr\nonumber\\&:=\mathcal{P}^n_{21}(s,u)+\mathcal{P}^n_{22}(s,u).
	\end{align}
	We first estimate the term $\mathcal P_{21}^n$. Note that
	\begin{align}
		|\mathcal P_{21}^n(s,u)|
		&\le
		C_{b'}
		\sum_{k=0}^{n-1}
		\mathbb  I_{[t_k,t_{k+1})}(s)
		\sum_{j=0}^{k-1}\Big|K^H(t_k,u)\mathbb{I}_{u\leq t_k}-K^H(t_j,u)\mathbb{I}_{u\leq t_j}\Big|
		\int_{t_j}^{t_{j+1}}
		\frac{r^{-\a}}{(s-r)^{\alpha+1}}dr\nonumber\\&\le C_{\a,b'}
		\sum_{k=0}^{n-1}
		\mathbb  I_{[t_k,t_{k+1})}(s)
		\sum_{j=0}^{k-1}\Big|K^H(t_k,u)\mathbb{I}_{u\leq t_k}-K^H(t_j,u)\mathbb{I}_{u\leq t_j}\Big|.\nonumber
	\end{align}
Since $s\in[t_k,t_{k+1})$, we have $t_j<t_k\leq s, j=0,\cdots,k-1$. 
Hence if we set $s=u$, $\mathbb{I}_{s\leq t_j}=0$. Morevoer, under the constraints $s\in [t_k,t_{k+1})$ and $s\leq t_k$, we have $s=u=t_k$, then $K^H(t_k,t_k)=0$. Consequently,
$$\mathcal{P}^n_{21}(s,s)=0.$$	
For the term $\mathcal{P}^n_{22}$. Recall
\begin{align}\label{small ineq}
	|s^{-\alpha}-r^{-\alpha}|
	\le
	C_\alpha (s-r)r^{-\alpha-1},
\end{align}
	we obtain
	\begin{align}
		|\mathcal P_{22}^n(s,u)|
		&\le
		C_{\alpha,b'}
		\sum_{k=0}^{n-1}
		\mathbb I_{[t_k,t_{k+1})}(s)
		\sum_{j=0}^{k-1}
		\int_{t_j}^{t_{j+1}}
		(s-r)^{-\alpha}
		r^{-\alpha-1}
		|K^H(t_k,u)|
		\mathbb I_{\{u\le t_k\}}
		\,dr.
		\nonumber
	\end{align}
	Since
	\begin{align}
		\sum_{j=0}^{k-1}
	\int_{t_j}^{t_{j+1}}
	(s-r)^{-\alpha}
	r^{-\alpha-1}dr
	&\leq \sum_{j=0}^{k-1}u^{-\alpha-1}\int_{t_j}^{t_{j+1}}
	(s-r)^{-\alpha}dr\qquad\qquad\qquad\qquad\nonumber\\
	&\leq \sum_{j=0}^{k-1}u^{-\alpha-1}\Big((s-t_j)^{1-\a}-(s-t_{j+1})^{1-\a}\Big)\nonumber\\&\leq C_k u^{-\alpha-1}
s^{1-\a},\nonumber
	\end{align}
	it follows that
	\[
	|\mathcal P_{22}^n(s,u)|
	\le
	C_{\alpha,b',k}\sum_{k=0}^{n-1}\mathbb{I}_{s\in[t_k,t_{k+1})}u^{-\a-1}
|K^H(t_k,u)|\mathbb{I}_{u\leq t_k}.
	\qquad\qquad\quad\quad\quad\quad\quad
	\]
Similarly to $\mathcal{P}^n_{1}$, if we $s=u$, we have $s=u=t_k$. That is
$$\mathcal{P}^n_{22}(s,s)=0.$$	
For the term $\mathcal{P}^n_{3}(s,u)$, the change of variable $v=\frac{r-t_k}{s-t_k}$ gives that
	\begin{align}
	\int_{t_k}^{s}\frac{s^{-\a}-r^{-\a}}{(s-r)^{\a+1}}dr
	=(s-t_k)^{-\alpha}\int_{0}^{1}\frac{s^{-\a}-((s-t_k)v+t_k)^{-\a}}{(1-v)^{\a+1}}dv,\nonumber
\end{align}
Using \eqref{small ineq} again we have $$s^{-\a}-((s-t_k)v+t_k)^{-\a}\leq C_{\a}|(s-t_k)v+t_k|^{-\a-1}|(s-t_k)(1-v)|.$$
Substituting this estimate into the above integral representation yields
\begin{align}
	\mathcal{P}^n_{3}(s,u)&=\sum_{k=0}^{n-1}\frac{1}{\G(1-\a)}\mathbb{I}_{s\in[t_k,t_{k+1})}\a s^{\a}b'(\phi^n_{t_k})K^H(t_k,u)\mathbb{I}_{u\leq t_k}\int_{t_k}^{s}\frac{s^{-\a}-r^{-\a}}{(s-r)^{\a+1}}dr\nonumber\\&\leq C_{\a,b'}\sum_{k=0}^{n-1}\mathbb{I}_{s\in[t_k,t_{k+1})} s^{\a}K^H(t_k,u)\mathbb{I}_{u\leq t_k}(s-t_k)^{-\alpha}\int_{0}^{1}\frac{s^{-\a}-((s-t_k)v+t_k)^{-\a}}{(1-v)^{\a+1}}dv\nonumber\\&\leq C_{\a,b'}\sum_{k=0}^{n-1}\mathbb{I}_{s\in[t_k,t_{k+1})} s^{\a}K^H(t_k,u)\mathbb{I}_{u\leq t_k}(s-t_k)^{-\alpha}\int_{0}^{1}|(s-t_k)v+t_k|^{-\a-1}\frac{|(s-t_k)(1-v)|}{(1-v)^{\a+1}}dv\nonumber\\&\leq C_{\a,b'}\sum_{k=0}^{n-1}\mathbb{I}_{s\in[t_k,t_{k+1})} s^{\a}K^H(t_k,u)\mathbb{I}_{u\leq t_k}(s-t_k)^{1-\alpha}t_k^{-\a-1}\int_{0}^{1}(1-v)^{-\a}dv\nonumber\\&\leq C_{\a,b'}\sum_{k=0}^{n-1}\mathbb{I}_{s\in[t_k,t_{k+1})} s^{\a}K^H(t_k,u)\mathbb{I}_{u\leq t_k}.\nonumber 
\end{align}
Finally, if $s=u$, then $s\in[t_k,t_{k+1})$ and $u\leq t_k$ imply $s=u=t_k$, and hence
	\begin{align}
		\mathcal{P}^n_{3}(s,s)=0.\nonumber
	\end{align}
	As a consequence, we have
	\begin{align}
		Tr(\tilde{f}_n)&=\frac{1}{2}\int_{0}^{1}f_n(s,s)ds=\frac{1}{2}\int_{0}^{1}\mathcal{P}^n_1(s,s)+\mathcal{P}^n_2(s,s)+\mathcal{P}^n_3(s,s)ds\nonumber\\&=\frac{1}{2}\int_{0}^{1}\mathcal{P}^n_{1}(s,s)+\mathcal{P}^n_{21}(s,s)+\mathcal{P}^n_{22}(s,s)+\mathcal{P}^n_{3}(s,s)ds\nonumber\\&=0.\nonumber
	\end{align}
	To summarized what we have proved, Lemma \ref{Separation lemma} and Lemma \ref{norm} give us
	\begin{align}\label{J_{13}}
		\limsup _{\varepsilon \rightarrow 0}\ \E(\exp(\mathcal{I}^n_{12})|\|B^H\|<\varepsilon)=1.
	\end{align}
	Finally, it only remains to study the limit behavior of the term $\mathcal{I}^n_{13}$. For any $c\in \mathbb{R}$ and $\delta>0$ we can write
	\begin{align}
	\E&\Big(\exp(c\mathcal{I}^n_{13})|\|B^H\|\leq \varepsilon\Big)\nonumber\\
	&\leq e^{\delta}+\int_{\delta}^{\infty}e^{\xi}\mathbb{P}(|c\mathcal{I}^n_{13}|>\xi|\|B^H\|_{\b}\leq \varepsilon)d \xi +e^{\delta}\mathbb{P}(|c\mathcal{I}^n_{13}|>\delta |\|B^H\|_{\b}\leq \varepsilon).\nonumber
\end{align}
	Define the martingale $M^n_t=c\int_{0}^{t}d_H^{-1}s^{-\a}\Big(D^{\alpha}_{0+}u^{-\alpha}R_u^{n}\Big)(s)dW_s$.
	 To estimate whose quadratic variations we make use of the following expression of the residual term
	\begin{align}
		R_s^{n}&=b(\phi^{n}_{\eta_n(s)}+B_{\eta_n(s)}^H)-b(\phi^n_{\eta_n(s)})-b'(\phi^n_{\eta_n(s)})B_{\eta_n(s)}^H\nonumber\\&=\int_{0}^{1}\big(b'(\phi_{\eta_n(s)}^{n}+\lambda B_{\eta_n(s)}^H)-b'(\phi^n_{\eta_n(s)})\big)d\lambda B^H_{\eta_n(s)}\nonumber\\& =\int_{0}^{1}\int_{0}^{\lambda}b''(\phi_{\eta_n(s)}^{n}+\mu B_{\eta_n(s)}^H)d\mu d\lambda (B^H_{\eta_n(s)})^2\nonumber.
		\qquad\qquad\qquad\qquad\qquad\qquad
	\end{align}
	Using that $b''$ is Lipschitz with constant $L''$, we have
	\begin{align}
		&R_s^{n}-R_r^{n}\nonumber\\
		&=\int_{0}^{1}\int_{0}^{\lambda}\Big|b''(\phi^n_{\eta_n(s)}+\mu B_{\eta_n(s)}^H)-b''(\phi^n_{\eta_n(r)}+\mu B_{\eta_n(r)}^H)\Big|d\mu d\lambda (B^H_{\eta_n(s)})^2\qquad\quad\nonumber\\
		&\quad+\int_{0}^{1}\int_{0}^{\lambda}b''(\phi^n_{\eta_n(r)}+\mu B_{\eta_n(r)}^H)d\mu d\lambda \big|(B^H_{\eta_n(s)})^2-(B^H_{\eta_n(r)})^2\big|\nonumber\\
		&\leq\nonumber L''\Big(\frac{1}{2}|\phi^n_{\eta_n(s)}-\phi^n_{\eta_n(s)}|+\frac{1}{6}|B_{\eta_n(s)}-B_{\eta_n(r)}^2|\Big)(B^H_{\eta_n(s)})^2\nonumber\\
		&\quad+\frac{1}{2}\|b''\|_{\infty}\big|(B^H_{\eta_n(s)})^2-(B^H_{\eta_n(r)})^2\big|.\nonumber
	\end{align}
	Combining \eqref{weyl representation}, \eqref{integral transform} and \eqref{estimate of Remainder term}, we have
	\begin{align}
		&\G(1-\a)\Big|s^{\a}(D^{\a}_{0+}u^{-\a}R_u^{n})(s)\Big|\nonumber\\&=\Bigg|s^{-\a}R_s^{n}+\a s^{\a}\int_{0}^{s}\frac{s^{-\a}R_s^{n}-r^{-\a}R_r^{n}}{(s-r)^{\a+1}}dr\Bigg|\nonumber\\&\leq C_{\b}s^{-\a}\varepsilon^2+C_{\a,\b} s^{\a}\varepsilon^2\int_{0}^{s}\frac{|s^{-\a}-r^{-\a}|}{(s-r)^{\a+1}}dr+\a s^{\a}\int_{0}^{s}\frac{r^{-\a}|R_s^{n}-R_r^{n}|}{(s-r)^{\a+1}}dr\nonumber\\&\leq C_{\b}s^{-\a}\varepsilon^2+C_{\a,\b} s^{\a}\varepsilon^2\int_{0}^{s}\frac{|s^{-\a}-r^{-\a}|}{(s-r)^{\a+1}}dr+\frac{L''}{2}\a s^{\a}|B^H_{\eta_n(s)}|^2\int_{0}^{s}\frac{r^{-\a}|\phi^n_{\eta_n(s)}-\phi^n_{\eta_n(r)}|}{(s-r)^{\a+1}}dr\nonumber\\&\quad+\frac{L''}{6}\a s^{\a}|B^H_{\eta_n(s)}|^2\int_{0}^{s}\frac{r^{-\a}|B^H_{\eta_n(s)}-B^H_{\eta_n(r)}|}{(s-r)^{\a+1}}dr+\frac{\|b''\|_{\infty}}{2}\a s^{\a}\int_{0}^{s}\frac{r^{-\a}\big|(B^H_{\eta_n(s)})^2-(B_{\eta_n(r)}^H)^2\big|}{(s-r)^{\a+1}}dr\nonumber\\&\leq C_{\b}\varepsilon^2+C_{\a,\b} s^{-\a}\varepsilon^2+C_{\a,\b,n,L''}\varepsilon^2 s^{\a}|\eta_n(s)|^{2\b}\int_{0}^{s}\frac{r^{-\a}|\eta_n(s)-\eta_n(r)|^{H}}{(s-r)^{\a+1}}dr\nonumber\\&\quad+C_{\a,L''}\varepsilon^3s^{\a}|\eta_n(s)|^{2\b}\int_{0}^{s}\frac{r^{-\a}|\eta_n(s)-\eta_n(r)|^{\b}}{(s-r)^{\a+1}}dr\nonumber\\&\quad+C_{\a}\varepsilon^2 s^{\a}\int_{0}^{s}\frac{r^{-\a}|\eta_n(s)-\eta_n(r)|^{2\b}}{(s-r)^{\a+1}}dr+C_{\a}\varepsilon^2 s^{\a}\int_{0}^{s}\frac{r^{-\a}|\eta_n(s)-\eta_n(r)|^{\b}\eta_n(r)^{\b}}{(s-r)^{\a+1}}dr\nonumber\\&\leq C_{\b}\varepsilon^2+C_{\a,\b} s^{-\a}\varepsilon^2+C_{\a,\b,n,L''}\varepsilon^2 s^{2\b+1/2}+C_{\a,L''}\varepsilon^3s^{2\b+1/2}.\nonumber
	\end{align}
	As a consequence,
	\begin{align}
		\langle M^n \rangle_t=&c^2d_H^{-2}\int_{0}^{t}\Big(s^{\alpha}\Big(D^{\alpha}_{0+}u^{-\alpha}R_u^{n}\Big)(s)\Big)^2ds\leq C_{\a,\b,n,H,L''}\varepsilon^4.\nonumber
	\end{align}
	Applying the exponential inequality for martingales, we have
	\begin{align}\label{estimate of expon}
		\mathbb{P}\Big(\Big|c\int_{0}^{t}s^{-\a}\Big(I^{\alpha}_{0+}u^{\alpha}R_u^{n}\Big)(s)dW_s\Big|>\xi, \|B^H\|_{\b}\leq \varepsilon\Big)\leq \exp\Big(-\frac{\xi^2}{2C_{\a,\b,n,H,L''}\varepsilon^{4}}\Big),
	\end{align}
	for every real number $c$.\\
	Combining Lemma \ref{small ball of holder} and inequality \eqref{estimate of expon}, we see that
	\begin{align}
	\mathbb{P}&\Big(\Big|c\int_{0}^{1}s^{-\a}\Big(I^{\alpha}_{0+}u^{\alpha}R_u^{n}\Big)(s)dW_s\Big|>\xi \Big|\|B^H\|_{\b}\leq \varepsilon\Big)
	\leq \exp\Big(-\frac{\xi^2}{2C_{\a,\b,n,H,L''}\varepsilon^{4}}\Big)\exp\big(C_H \varepsilon^{-\frac{1}{H-\b}}\big).\nonumber
\end{align}
	\\
	Using the latter estimate we have for every $\delta>0$ and every $0<\varepsilon<1$
	\begin{align}
		\E&\Big(\exp(c\mathcal{I}^n_{13})|\|B^H\|_{\beta}\leq \varepsilon\Big)\nonumber\\
		&\leq e^{\delta}+\int_{\delta}^{\infty}\exp\Big\{\xi-\frac{\xi^2}{2C_{\a,\b,n,H,L''}\varepsilon^{4}}+C_H \varepsilon^{-\frac{1}{H-\b}}\Big\}d \xi \nonumber\\
		&\quad+\exp\Big\{\delta-\frac{\delta^2}{2C_{\a,\b,n,H,L''}\varepsilon^{4}}+C_H \varepsilon^{-\frac{1}{H-\b}}\Big\}.\nonumber
	\end{align}
	Letting $\varepsilon$ and then $\delta$ tend to zero, we obtain
	\begin{align}\label{J_{14}}
		\limsup _{\varepsilon \rightarrow 0}\ \E(\exp(c\mathcal{I}^n_{13})|\|B^H\|_{\b}<\varepsilon)\leq1,
	\end{align}
	for every real number $c$. \\
	
	In conclusion, the following expression is a consequence of Lemma \ref{Separation lemma} and inequalities \eqref{small all 2}, \eqref{J_2}, \eqref{J_3}, \eqref{J_4}, \eqref{the part of J_1}, \eqref{J_{11}}, \eqref{J_{13}} and \eqref{J_{14}}.
	$$
	\lim _{\varepsilon \rightarrow 0} \frac{\mathbb{P}(\|X^n-\phi^{n}\|_{\b}<\varepsilon)}{\mathbb{P}(\|B^H\|_{\b}<\varepsilon)}=\exp \left(-\frac{1}{2}\int_{0}^{1}\big|\dot{\phi}^{n}_s-d_H^{-1}s^{\a}\big(D^{\a}_{0+}u^{-\a}b(\phi^n_{\eta_n(u)})\big)(s)\big|^2ds\right).
	$$
	The proof of Theorem \ref{th:regular case} is complete.
\end{proof}
	\section{The most probable path of numerical solution process}\label{TMPP}
In this section, we derive Euler-Lagrange equations for the numerical Onsager-Machlup action functional of numerical solution process $X^n_t$. And then by numerical experiments to verify our theoretical results.

\textbf{Case 1.} 
When $\frac{1}{4}<H<\frac{1}{2}$, recall that the most probable path $\psi^n$ is the minimizer of the non-negative functional $J(\phi^n,\dot\phi^n)=-L^n(\phi^n,\dot\phi^n)$
with the numerical Onsager-Machlup action functional $L^n$ we have derived in Theorem \ref{th:singular case}.
Define $A^n_t = \dot\phi^n_t - d_H^{-1}t^{-\alpha} I^\alpha_{0+}
\big(u^\alpha b(\phi^n_{\eta_n(u)})\big)(t)$, 
then we have $J(\phi^n,\dot\phi^n)=\frac{1}{2}\int_0^1|A^n_t|^2 dt$.

For every $n$, let $v_t^n$ be a smooth test function with $v_0^n=v_1^n=0$ and perturb $\phi^n\to\phi^n+\kappa v^n$. 
The first variation is
\begin{align*}
	\delta J = \frac{d}{d\kappa}J(\phi^n+\kappa v^n)\Big|_{\kappa=0} 
	= \int_0^1 A^n_t \dot v_t^n dt
	-d_H^{-1}\int_0^1 A^n_t t^{-\alpha} I^\alpha_{0+}\big(u^\alpha b'(\phi^n_{\eta_n(u)}) v_{\eta_n(u)}^n\big)(t)dt.
\end{align*}
For the first term, classical integration by parts with $v_0^n=v_1^n=0$ gives
\begin{align*}
	\int_0^1 A^n_t \dot v_t^n dt = -\int_0^1 \dot A^n_t v_t^n dt .
\end{align*}
For the second term, we can use the fractional integration by parts formula
\begin{align*}
	\int_0^1 f(t)I^\alpha_{0+}g(t)dt = \int_0^1 g(t)I^\alpha_{1-}f(t)dt
	\quad\text{with}\quad
	I^\alpha_{1-}f(t)=\frac{1}{\Gamma(\alpha)}\int_t^1 (s-t)^{\alpha-1}f(s)ds,
\end{align*}
which yields
\begin{align*}
	\quad&\int_0^1 A^n_t t^{-\alpha} I^\alpha_{0+}\big(u^\alpha b'(\phi^n_{\eta_n(u)}) v_{\eta_n(u)}^n\big)(t)dt
	=\int_0^1 t^\alpha b'(\phi^n_{\eta_n(t)}) v_{\eta_n(t)}^n  I^\alpha_{1-}\big(s^{-\alpha}A^n_s\big)(t)dt\quad\quad\qquad\\
	&=\sum_{i=0}^{n-1} \int_{t_i}^{t_{i+1}} t^\alpha b'(\phi^n_{t_i}) v_{t_i} I^\alpha_{1-}\big(s^{-\alpha}A^n_s\big)(t)dt 
	=\sum_{i=1}^{n-1} v_{t_i}^n b'(\phi^n_{t_i}) \int_{t_i}^{t_{i+1}} u^\alpha I^\alpha_{1-}\big(s^{-\alpha}A^n_s\big)(u)du \\
	&=\int_0^1 v_t^n \Big(\sum_{i=1}^{n-1} \delta_{t_i}(t)b'(\phi^n_{t_i}) \int_{t_i}^{t_{i+1}} u^\alpha I^\alpha_{1-}\big(s^{-\alpha}A^n_s\big)(u)du\Big) dt.
\end{align*}

Combining both parts and setting $\delta J=0$ for all admissible $v_t^n$,
we can obtain that the most probable path $\psi^n$ satisfies  
\begin{align*}
	\int_0^1 v_t^n \big[-\dot A^n_t - d_H^{-1}\sum_{i=1}^{n-1} \delta_{t_i}(t) b'(\phi^n_{t_i}) \int_{t_i}^{t_{i+1}} u^\alpha I^\alpha_{1-}\big(s^{-\alpha}A^n_s\big)(u)du \big] dt = 0.\qquad\qquad\qquad\quad\quad
\end{align*}
By the calculus of variations, the Euler-Lagrange equation is
\begin{align}\label{EL1}
	\begin{cases}
		\dot A^n_t + \displaystyle\sum_{i=1}^{n-1} C^{n,i} \delta_{t_i}(t) = 0,\\[1.2em]
		A^n_t = \dot\phi^n_t - d_H^{-1}t^{-\alpha} I^\alpha_{0+}
		\big(u^\alpha b(\phi^n_{\eta_n(u)})\big)(t),
	\end{cases}
\end{align}
where $C^{n,i} = d_H^{-1}b'(\phi^n_{t_i}) \int_{t_i}^{t_{i+1}} u^\alpha I^\alpha_{1-}\big(s^{-\alpha}A^n_s\big)(u)du$.

Next, we will further analyze the above equation and obtain an explicit equation for $\psi^n$.
When $t \in (t_i, t_{i+1})$, we have $\dot A^n_t=0$, so there exist constants $A^{n,i}$ such that $A^n_t \equiv A^{n,i}$. The left fractional integral explicitly expands to
\begin{align*}
	&I^\alpha_{0+} \big(u^\alpha b(\phi^n_{\eta_n(u)})\big)(t) 
	= 
	\frac{1}{\Gamma(\alpha)} \left( \sum_{k=0}^{i-1} b(\phi^n_{t_k}) \int_{t_k}^{t_{k+1}} (t-u)^{\alpha-1} u^\alpha du + b(\phi^n_{t_i})\int_{t_i}^t (t-u)^{\alpha-1} u^\alpha du \right)\\
	= &
	\frac{t^{2\alpha}}{\Gamma(\alpha)} \left[
	\sum_{k=0}^{i-1} b(\phi^n_{t_k})
	\left(\!B_{\frac{t_{k+1}}{t}}\!(\alpha+1, \alpha)-B_{\frac{t_{k}}{t}}\!(\alpha+1, \alpha)\!\right)
	+b(\phi^n_{t_i})\left(\!B_{1}\!(\alpha+1, \alpha) - B_{\frac{t_i}{t}}\!(\alpha+1, \alpha) \!\right)
	\right],
\end{align*}
where $B_x(a, b) = \int_0^x y^{a-1}(1-y)^{b-1} dy$ means the incomplete Beta function with $a > 0$, $b > 0$ and $0 \leq x \leq 1$, then we have for all $t \in (t_i, t_{i+1})$
\begin{align*}
	\dot\phi^n_t 
	&=A^{n,i}+\frac{d_H^{-1}t^{\alpha}}{\Gamma(\alpha)} \left[
	\sum_{k=0}^{i-1} b(\phi^n_{t_k})\!
	\left(\!B_{\frac{t_{k+1}}{t}}\!(\alpha+1, \alpha)-B_{\frac{t_{k}}{t}}\!(\alpha+1, \alpha)\!\right)
	+b(\phi^n_{t_i})\!\left(\!B_{1}\!(\alpha+1, \alpha) - B_{\frac{t_i}{t}}\!(\alpha+1, \alpha) \!\right)
	\right].
\end{align*}
When $t=t_i$, we have the jump $C^n_i = A^n_{t_i^-} - A^n_{t_i^+}=A^{n,i-1} - A^{n,i}$.
By the definition of the right Riemann-Liouville fractional integral $I^\alpha_{1-}$ and Fubini's theorem, we obtain 
\begin{align*}
	C^{n,i} &= \frac{d_H^{-1}b'(\phi^n_{t_i})}{\Gamma(\alpha)} 
	\int_{t_i}^{t_{i+1}} u^\alpha \left( \int_u^1 (s-u)^{\alpha-1} s^{-\alpha} A^n_s ds \right) du\\
	&= \frac{d_H^{-1}b'(\phi^n_{t_i})}{\Gamma(\alpha)} 
	\int_{t_i}^{t_{i+1}} u^\alpha \left( \int_u^1 (s-u)^{\alpha-1} s^{-\alpha} 
	\sum_{k=0}^{n-1} A^{n,k} \mathbf{1}_{(t_k, t_{k+1})}(s)ds \right) du\\
	&= \frac{d_H^{-1}b'(\phi^n_{t_i})}{\Gamma(\alpha)} 
	\sum_{k=i}^{n-1}A^{n,k}
	\int_{t_i}^{t_{i+1}} u^\alpha \left( \int_{u\vee {t_k}}^{t_{k+1}} (s-u)^{\alpha-1} s^{-\alpha} ds \right) du.
\end{align*}
By the definition of $A^n_t$, it follows that the jump of $\dot{\phi}^n$ at the node $t_i$ is
$\dot{\phi}^n_{t_i^+} - \dot{\phi}^n_{t_i^-}=-C^{n,i}$.

Therefore, the Euler-Lagrange equation \eqref{EL1} can be rewritten as
\begin{align}\label{EL1exp}
	\begin{cases}
		\dot\phi^n_t 
		=A^{n,i}+\frac{d_H^{-1}t^{\alpha}}{\Gamma(\alpha)}G^{n,i}_t, &\quad t \in (t_i, t_{i+1}),\quad i=0,\dots,n-1,\\[1.2em]
		\dot{\phi}^n_{t_i^+} - \dot{\phi}^n_{t_i^-}=-C^{n,i}, &\quad i=1,\dots,n-1,
	\end{cases}
\end{align}
where 
$$
G^{n,i}_t=
\sum_{k=0}^{i-1} b(\phi^n_{t_k})\!
\left(\!B_{\frac{t_{k+1}}{t}}\!(\alpha+1, \alpha)-B_{\frac{t_{k}}{t}}\!(\alpha+1, \alpha)\!\right)
+b(\phi^n_{t_i})\!\left(\!B_{1}\!(\alpha+1, \alpha) - B_{\frac{t_i}{t}}\!(\alpha+1, \alpha) \!\right)
$$
and
$$
C^{n,i} =A^{n,i-1} - A^{n,i}= \frac{d_H^{-1}b'(\phi^n_{t_i})}{\Gamma(\alpha)} 
\sum_{k=i}^{n-1}A^{n,k}
\int_{t_i}^{t_{i+1}} u^\alpha \left( \int_{u\vee {t_k}}^{t_{k+1}} (s-u)^{\alpha-1} s^{-\alpha} ds \right) du.\qquad\quad\quad\quad\;\;\;
$$

\textbf{Case 2.} 
Similarly, according to Theorem \ref{th:regular case}, 
we can deduce the fractional Euler-Lagrange equation when $\frac{1}{2}<H<1$
\begin{align}\label{EL2}
	\begin{cases}
		\dot A^n_t + \displaystyle\sum_{i=1}^{n-1} C^{n,i} \delta_{t_i}(t) = 0,\\[1.2em]
		A^n_t = \dot\phi^n_t - d_H^{-1}t^\alpha D^\alpha_{0+}
		\big(u^{-\alpha} b(\phi^n_{\eta_n(u)})\big)(t),
	\end{cases}
\end{align}
where $C^{n,i} = d_H^{-1}b'(\phi^n_{t_i}) \int_{t_i}^{t_{i+1}} u^{-\alpha} D^\alpha_{1-}\big(s^{\alpha}A^n_s\big)(u)du$.

Next, we derive an explicit discrete form of \eqref{EL2}. 
For $t\in (t_i,t_{i+1})$, the equation $\dot A_t^n=0$ implies that there exist constants $A^{n,i}$ such that
\begin{align*}
	A_t^n\equiv A^{n,i},\qquad t\in (t_i,t_{i+1}),\quad i=0,\dots,n-1.
\end{align*}
For $t\in (t_i,t_{i+1})$, by the Weyl representation \eqref{weyl representation}, 
we can compute the left-sided Riemann-Liouville derivative 
\begin{align*}
	&\quad D^\alpha_{0+}\big(u^{-\alpha} b(\phi^n_{\eta_n(u)})\big)(t)
	=
	\frac{1}{\Gamma(1-\alpha)}
	\left(
	\frac{t^{-\alpha} b(\phi^n_{t_i})}{t^\alpha}
	+\alpha\int_0^t
	\frac{t^{-\alpha} b(\phi^n_{t_i})-u^{-\alpha} b(\phi^n_{\eta_n(u)})}{(t-u)^{\alpha+1}}du
	\right)\\
	&=
	\frac{1}{\Gamma(1-\alpha)}
	\Big[
	t^{-2\alpha}b(\phi^n_{t_i})
	+\alpha\sum_{k=0}^{i-1}\int_{t_k}^{t_{k+1}}
	\frac{t^{-\alpha} b(\phi^n_{t_i})-u^{-\alpha} b(\phi^n_{t_k})}{(t-u)^{\alpha+1}}du
	+\alpha b(\phi^n_{t_i})\int_{t_i}^t\frac{t^{-\alpha}-u^{-\alpha}}{(t-u)^{\alpha+1}}du
	\Big]\\
	&=\frac{1}{\Gamma(1-\alpha)}
	\Big[
	t^{-2\alpha}b(\phi^n_{t_i})
	+\alpha\sum_{k=0}^{i-1}\left(b(\phi^n_{t_i})-b(\phi^n_{t_k})\right)
	\int_{t_k}^{t_{k+1}}\!\!\frac{u^{-\alpha}}{(t-u)^{\alpha+1}}du
	+\alpha b(\phi^n_{t_i})
	\int_0^t\frac{t^{-\alpha}-u^{-\alpha}}{(t-u)^{\alpha+1}}du
	\Big]\\
	&=\frac{b(\phi^n_{t_i})}{\Gamma(1-\alpha)} 
	\Big[
	t^{-2\alpha}+\alpha \int_0^t\frac{t^{-\alpha}-u^{-\alpha}}{(t-u)^{\alpha+1}}du
	\Big]
	+ \frac{\alpha}{\Gamma(1-\alpha)} \sum_{k=0}^{i-1} \big(b(\phi^n_{t_i}) - b(\phi^n_{t_k})\big) \int_{t_k}^{t_{k+1}} \!\!\frac{u^{-\alpha}}{(t-u)^{\alpha+1}} du\\
	&=\frac{\Gamma(1-\alpha)}{\Gamma(1-2\alpha)} t^{-2\alpha} b(\phi^n_{t_i}) + \frac{\alpha}{\Gamma(1-\alpha)} \sum_{k=0}^{i-1} \big(b(\phi^n_{t_i}) - b(\phi^n_{t_k})\big) \int_{t_k}^{t_{k+1}}\!\! \frac{u^{-\alpha}}{(t-u)^{\alpha+1}} du,
\end{align*}
where the last identity follows from
$$ D^\alpha_{0+}(u^{-\alpha})(t) = \frac{1}{\Gamma(1-\alpha)}\Big[ t^{-2\alpha} + \alpha \int_0^t \frac{t^{-\alpha}-u^{-\alpha}}{(t-u)^{\alpha+1}}du \Big] = \frac{\Gamma(1-\alpha)}{\Gamma(1-2\alpha)}t^{-2\alpha}. $$
Indeed, by the definition of the left-sided Riemann-Liouville fractional derivative 
and the properties of Beta functions and Gamma functions, we have  
\begin{align*}
	D_{0+}^{\alpha}(t^{-\alpha})
	&=\frac{1}{\Gamma(1-\alpha)}\frac{d}{dt}\int_0^t (t-u)^{-\alpha}u^{-\alpha} du\nonumber\\&=\frac{1}{\Gamma(1-\alpha)}\frac{d}{dt}
	\int_0^1 (t-tv)^{-\alpha}(tv)^{-\alpha}t dv\\
	&=\frac{(1-2\alpha)t^{-2\alpha}}{\Gamma(1-\alpha)}\int_0^1 (1-v)^{-\alpha}v^{-\alpha} dv =\frac{(1-2\alpha)t^{-2\alpha}}{\Gamma(1-\alpha)}B(1-\alpha,1-\alpha)\\
	&=\frac{(1-2\alpha)t^{-2\alpha}}{\Gamma(1-\alpha)}\frac{\Gamma(1-\alpha)^2}{\Gamma(2-2\alpha)} 
	=(1-2\alpha)t^{-2\alpha}\frac{\Gamma(1-\alpha)}{\Gamma(2-2\alpha)}
	=\frac{\Gamma(1-\alpha)}{\Gamma(1-2\alpha)}t^{-2\alpha}.
\end{align*}
Therefore, for all $t\in (t_i,t_{i+1})$,
\begin{align*}
	\dot\phi_t^n
	=
	A^{n,i}
	+
	\frac{d_H^{-1}\Gamma(1-\alpha)}{\Gamma(1-2\alpha)} t^{-\alpha} b(\phi^n_{t_i}) + \frac{d_H^{-1}\alpha t^{\alpha}}{\Gamma(1-\alpha)} \sum_{k=0}^{i-1} \big(b(\phi^n_{t_i}) - b(\phi^n_{t_k})\big) \int_{t_k}^{t_{k+1}} \!\!\frac{u^{-\alpha}}{(t-u)^{\alpha+1}} du.
\end{align*}
When $t=t_i$, the jump of $A_t^n$ is
\begin{align*}
	C^n_i = A^n_{t_i^-} - A^n_{t_i^+}=A^{n,i-1} - A^{n,i},\qquad i=1,\dots,n-1.\qquad\quad
\end{align*}
On the other hand, by the Weyl representation for $D^\alpha_{1-}$ and Fubini's theorem, we obtain for $u\in (t_i,t_{i+1})$,
\begin{align*}
	C^n_i &= d_H^{-1}b'(\phi^n_{t_i}) \int_{t_i}^{t_{i+1}} u^{-\alpha} D^\alpha_{1-}\big(s^{\alpha}A^n_s\big)(u)du \\
	&= \frac{d_H^{-1}b'(\phi^n_{t_i})}{\Gamma(1-\alpha)} \int_{t_i}^{t_{i+1}}\Big[ 
	\frac{ A^{n,i}}{(1-u)^\alpha} 
	+ \alpha \int_u^1 \frac{ A^{n,i} - u^{-\alpha}s^{\alpha} A^n_s}{(s-u)^{\alpha+1}} ds 
	\Big]du \\
	&= \frac{d_H^{-1}b'(\phi^n_{t_i})}{\Gamma(1-\alpha)} \int_{t_i}^{t_{i+1}} \Big[ 
	\frac{A^{n,i}}{(1-u)^\alpha} 
	+ \alpha \sum_{k=i}^{n-1} \int_{t_k \vee u}^{t_{k+1}} 
	\frac{ A^{n,i} - u^{-\alpha}s^{\alpha} A^{n,k}}{(s-u)^{\alpha+1}} ds
	\Big]du.
\end{align*}
The jump condition applies to the residual $A^n$:
$A^n_{t_i^+}-A^n_{t_i^-}=-C^{n,i}$.
The fractional drift term may be singular at the nodes, so a finite jump of $\dot\phi^n$ is not asserted.

Consequently, the Euler-Lagrange equation \eqref{EL2} can be rewritten as
\begin{align}
	\label{EL2exp}
	\begin{cases}
		\displaystyle
		\dot\phi_t^n
		=
		A^{n,i}+G^{n,i}_t,
		&\quad t\in (t_i,t_{i+1}),\ \ i=0,\dots,n-1,\\[1.5em]
		A^n_{t_i^+}-A^n_{t_i^-}=-C^{n,i},
		&\quad i=1,\dots,n-1,
	\end{cases}
\end{align}
where
\begin{align*}
	G^{n,i}_t=
	\frac{d_H^{-1}\Gamma(1-\alpha)}{\Gamma(1-2\alpha)} t^{-\alpha} b(\phi^n_{t_i}) + \frac{d_H^{-1}\alpha t^{\alpha}}{\Gamma(1-\alpha)} \sum_{k=0}^{i-1} \big(b(\phi^n_{t_i}) - b(\phi^n_{t_k})\big) \int_{t_k}^{t_{k+1}}\!\! \frac{u^{-\alpha}}{(t-u)^{\alpha+1}} du.
\end{align*}
and
\begin{align*}
	C^{n,i}
	=
	\frac{d_H^{-1}b'(\phi^n_{t_i})}{\Gamma(1-\alpha)}
	\int_{t_i}^{t_{i+1}}
	\Big[
	\frac{A^{n,i}}{(1-u)^\alpha}
	+\alpha\sum_{k=i}^{n-1}
	\int_{u\vee t_k}^{t_{k+1}}
	\frac{A^{n,i}-u^{-\alpha} s^{\alpha}A^{n,k}}{(s-u)^{\alpha+1}}ds
	\Big]du.\qquad\quad
\end{align*}

\section{Examples}\label{examples}
In this section, we present an example to illustrate our theoretical result. 
We consider the periodic potential
$$
V(x)=\frac{\cos(\pi x)}{\pi},
$$
and the corresponding drift function
$$
b(x)=-V'(x)=\sin(\pi x).
$$
The potential \(V\) has local minima at
$x=2m+1$, $m\in\mathbb Z$, 
and local maxima at 
$x=2m$, $m\in\mathbb Z$, 
representing metastable states of the underlying dynamical system. 
The stochastic dynamics may escape from one potential well and reach the other by overcoming the energy barrier located between the two metastable states. 
	Consider the following Euler scheme for a scalar SDE driven by a fractional Brownian motion with Hurst index $H \in (\frac{1}{4}, \frac{1}{2})$
	\begin{equation}\label{eq:example1}
		X^n_t =-1+\int_0^t
		\sin\!\left(\pi X^n_{\eta_n(s)}\right)ds
		+B^H_t,
		\qquad t\in[0,1].
	\end{equation}
	Here, we consider a uniform partition of the interval $[0,1]$ 
	defined by $t_i = \frac{i}{n}$ for $i = 0, 1, \dots, n$. 
	Furthermore, for any integer $n \ge 1$, 
	the function $\eta_n(t)$ is defined as $\eta_n(t) = t_k$ for $t \in [t_k, t_{k+1})$. 
	
	According to Theorem \ref{th:singular case}, the corresponding Onsager-Machlup action functional is given by
	\begin{equation*}
	L^n(\phi^n,\dot{\phi}^n)
	=-\frac12\int_0^1
	\left|
	\dot\phi^n_s
	-d_H^{-1}s^{-\alpha}
	\left(
	I^\alpha_{0+}
	\left[
	u^\alpha\sin\!\left(\pi\phi^n_{\eta_n(u)}\right)
	\right]
	\right)(s)
	\right|^2ds.
	\end{equation*}
	and by Equ. \eqref{EL1exp}, the associated fractional Euler-Lagrange equation is
	\begin{align*}
		\begin{cases}
			\displaystyle
			\dot\phi^n_t
			=
			A^{n,i}
			+\frac{d_H^{-1}t^\alpha}{\Gamma(\alpha)}G^{n,i}_t,
			&
			t\in(t_i,t_{i+1}),
			\quad i=0,\ldots,n-1,
			\\[1.2em]
			\displaystyle
			\dot\phi^n_{t_i^+}-\dot\phi^n_{t_i^-}
			=
			-C^{n,i},
			&
			i=1,\ldots,n-1,
		\end{cases}
	\end{align*}
	where
	\begin{align*}
		G^{n,i}_t
		&=\sum_{k=0}^{i-1}
		\sin\!\left(\pi\phi^n_{t_k}\right)
		\left[
		B_{\frac{t_{k+1}}{t}}(\alpha+1,\alpha)
		-B_{\frac{t_k}{t}}(\alpha+1,\alpha)
		\right]\\
		&\quad
		+\sin\!\left(\pi\phi^n_{t_i}\right)
		\left[
		B_1(\alpha+1,\alpha)
		-B_{\frac{t_i}{t}}(\alpha+1,\alpha)
		\right],
	\end{align*}
	and
	\begin{align*}
		C^{n,i}
		=A^{n,i-1}-A^{n,i}
		=\frac{\pi\cos\!\left(\pi\phi^n_{t_i}\right)}{\Gamma(\alpha)}
		\sum_{k=i}^{n-1}A^{n,k}
		\int_{t_i}^{t_{i+1}}u^\alpha
		\left(
		\int_{u\vee t_k}^{t_{k+1}}
		(s-u)^{\alpha-1}s^{-\alpha}ds
		\right)du.
	\end{align*}
		\begin{figure}[htbp]
		\centering
		\includegraphics[width=1.0\textwidth]{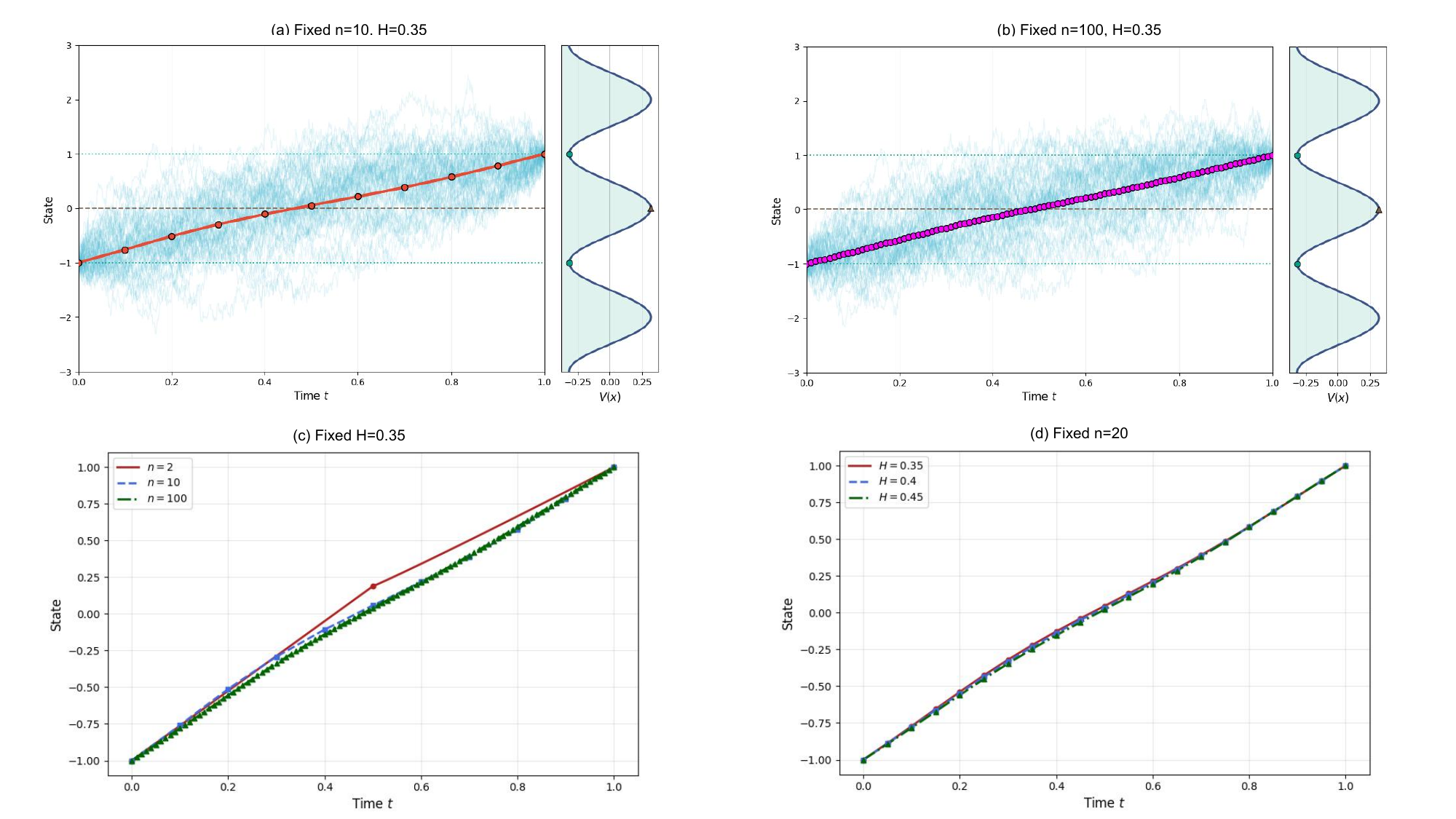}
		\caption{(a) The pattern of diferent sample paths (white lines) of the system \eqref{eq:example1} and the most probable path (magenta line) for $n=10$ and $H=0.35$.
			(b) The pattern of diferent sample paths (white lines) of the system \eqref{eq:example1} and the most probable path (orange line) for $n=100$ and $H=0.35$.
			(c) The change of the most probable paths with respect to
			different discretization parameters $n=2, 10, 100$ for fixed $H=0.35$.
			(d) The change of the most probable paths with respect to
			different Hurst parameters $H=0.35, 0.4, 0.45$ for fixed $n=20$.
		}
		\label{fig}
	\end{figure}
	The numerical results are presented below to illustrate the behavior of the
	most probable paths in the double-well potential landscape
	with different $n$ and the Hurst parameter $H$ in Figure \ref{fig}. 
	The potential barrier located at $x=0$ separates the two metastable states,
	and the transition path describes the most likely mechanism for overcoming
	this barrier under fractional stochastic perturbations.
	In Figures \ref{fig}(a)--(b), we demonstrate the relationship between the stochastic trajectories generated by the SDE \eqref{eq:example1} and the corresponding most probable path. We fix the Hurst parameter as 
	$H=0.35$ and investigate the influence of the time discretization parameter 
	$n=10$ and $n=100$. 
    It can be observed that the numerical paths become smoother and more 
	stable as the discretization becomes finer. 
	Figure \ref{fig}(c) further compares the most probable paths obtained with different values of $n=2, 10, 100$ with fixed $H=0.35$.
	In Figure \ref{fig}(d), we fix the discretization parameter at $n=20$ 
	and vary the Hurst parameter $H=0.35, 0.4, 0.45$. 

\appendix
\section{Appendix}\label{Appendix}
\subsection{Convergence of Euler scheme}
Since $b\in C_b^2$, by the elementary inequality
$|u+v+w|^p\le 3^{p-1}(|u|^p+|v|^p+|w|^p)$, we easily get
\begin{align}
	\EE |X_t|^p\leq C_p|x|^p+C_p\int_{0}^{1}\EE |b(X_t)|^pds+\EE|B_t^H|^p\leq C_{p,H}<\infty\nonumber
\end{align}
Simnilarly, we can also get $\EE |X_t^n|^p<\infty$. 
 For $s\in[0,1)$, the definition of the Euler scheme gives that
 \[
 X_s^n-X_{\eta_n(s)}^n
 =
 (s-\eta_n(s))b(X_{\eta_n(s)}^n)
 +B_s^H-B_{\eta_n(s)}^H.
 \]
 Consequently,
 \begin{align}
 	\mathbb E|X_s^n-X_{\eta_n(s)}^n|^p
 	&\le
 	C_p n^{-p}
 	+C_p\mathbb E|B_s^H-B_{\eta_n(s)}^H|^p
 	\nonumber\\
 	&\le C_{p}\bigl(n^{-p}+n^{-pH}\bigr)
 	\le C_p n^{-pH}.
 	\label{eq:euler-increment}
 \end{align}
 
 By Holder's inequality, Tonelli's theorem, and the
 Lipschitz continuity of $b$, we have that
 \begin{align}
 	\mathbb E|X_t-X_t^n|^p
 	&\le
 	t^{p-1}\int_0^t
 	\mathbb E|b(X_s)-b(X_{\eta_n(s)}^n)|^p\,ds
 	\nonumber\\
 	&\le
 	C_{p,L}\int_0^t\mathbb E|X_s-X_s^n|^p\,ds
 	\nonumber\\
 	&\quad+
 	C_{p,L}\int_0^t
 	\mathbb E|X_s^n-X_{\eta_n(s)}^n|^p\,ds
 	\nonumber\\
 	&\le
 	C_{p,L}\int_0^t\mathbb E|X_s-X_s^n|^p\,ds
 	+C_{p,L}n^{-pH},
 	\label{eq:euler-error}
 \end{align}
 where the last step follows from
 \eqref{eq:euler-increment}.
 Gronwall's inequality gives that
 \[
 \sup_{t\in[0,1]}\mathbb E|X_t-X_t^n|^p
 \le C_{p,L}n^{-pH},
 \]
 the constant $C_{p,L}$ is independent of $n$. So the proof of convergence of Euler scheme is complete.
\subsection{Approximate limits in Wiener space}
Before introducing the following theorem, we first recall some definitions and results on approximate limits in the Wiener space concerning measurable semi-norms for exponentials of random variables in the first and second Wiener chaos.

Let $W=\{W_t,t\in[0,1]\}$ be a Wiener process defined in the canonical probability space $(\Omega,\mathcal{F},\mathbb{P})$. $\Omega$ is the space of continuous functions vanishing
at zero, and $\mathbb{P}$ is the Wiener space. Let $H^1$ be the Cameron-Martin space, that is, the space of all absolutely continuous functions $h:[0,1]\to\mathbb{R}$ such that $h'\in H=L^2([0,1],\mathbb{R})$. The scalar product in $H^1$ is defined by
$$\langle h,g \rangle_{H^1}=\langle h',g' \rangle_{H},$$
for all $h,g\in H^1.$

Let $Q:H^1\to H^1$ be an orthogonal projection such that $\dim QH^1<\infty$. $Q$ can be written as
$$Qh=\sum_{i=1}^{n}\langle h, h_i\rangle h_i,$$
where $(h_1,\cdots, h_n)$ is an orthonormal sequence in $QH^1$. We can also define the $H^1$-valued random variable
$$Q^W=\sum_{i=1}^{n}\Big(\int_{0}^{1}h_i'(s)dW_s\Big)h_i.$$
Note that $Q^W$ does not depend on $(h_1,\cdots, h_n).$

A sequence of orthogonal projections $Q_n$ on $H^1$ is called the approximating sequence of projections if $\dim Q_nH^1<\infty$ and $Q_n$ converges strongly increasing to the identity operator in $H^1$.
\begin{defn}[\cite{MN02} Definition $1$]
	A semi-norm $N$ on $H^1$ is called a measurable semi-norm if there exists a random variable $\tilde{N}<\infty$ a.s, such that for all approximating sequence of projections $Q^n$ on $H^1$, the sequence $N(Q^W_n)$ converges in probability to $\tilde{N}$ and $\mathbb{P}(\tilde{N}\leq\varepsilon)> 0$ for all $\varepsilon>0$. If moreover $N$ is a norm on $H^1$, then is called measurable norm.
\end{defn}
We will make use of the following result on measurable semi-norms.
\begin{lem}[\cite{MN02} Lemma $1$]
	Let $N_n$ be a nondecreasing sequence of measurable semi-norms. Suppose that $\tilde{N}:= \mathbb{P}-\lim_{n\to\infty}\tilde{N}_n$ exists and $\mathbb{P} (\tilde{N}\geq \varepsilon) > 0$ for all $\varepsilon>0$. Then $N = \lim_{n\to\infty}N_n$ is a measurable semi-norm if this limit exists on $H^1$.
\end{lem}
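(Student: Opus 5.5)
The plan is to fix an arbitrary approximating sequence of projections $Q_k$ on $H^1$ and show that $N(Q_k^W)\to\tilde N$ in probability, where $\tilde N:=\mathbb P\text{-}\lim_{n}\tilde N_n$. Positivity of small-ball probabilities for $\tilde N$ is supplied by the hypothesis, which should be read as $\mathbb P(\tilde N\le\varepsilon)>0$. Since each $Q_k^W$ takes values in the finite-dimensional space $Q_kH^1$ and $N=\lim_n N_n$ exists on $H^1$, we have $N(Q_k^W)=\sup_n N_n(Q_k^W)=\lim_n N_n(Q_k^W)$ pointwise, by monotonicity. So the proof reduces to a two-sided comparison that is uniform in $k$.

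\emph{Lower bound.} For fixed $n$, monotonicity gives $N(Q_k^W)\ge N_n(Q_k^W)$. As $k\to\infty$, $N_n(Q_k^W)\to\tilde N_n$ in probability, because $N_n$ is measurable. Hence for every $\delta>0$,
$$\limsup_k\mathbb P\big(N(Q_k^W)<\tilde N-\delta\big)\le \mathbb P\big(|\tilde N_n-\tilde N|>\delta/2\big),$$
and the right-hand side tends to $0$ as $n\to\infty$.

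\emph{Upper bound.} I expect this to be the main obstacle. The difficulty is that $N-N_n$ is not a seminorm, so the excess $N(Q_k^W)-N_n(Q_k^W)$ cannot be controlled directly. I would first try a symmetrization argument. For $k<m$, write $Q_m^W=Q_k^W+(Q_m-Q_k)^W$. The two summands are independent centered Gaussian vectors, and $(Q_m-Q_k)^W$ is symmetric. For any seminorm $M$,
$$M(x)\le\tfrac12\big(M(x+y)+M(x-y)\big)\le\max\big(M(x+y),M(x-y)\big),$$
and $x+y$, $x-y$ have the same law. This gives a Lévy-type inequality $\mathbb P(M(Q_k^W)>a)\le 2\,\mathbb P(M(Q_m^W)>a)$. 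Applying it to $M=N_n$ and letting $m\to\infty$ yields $\mathbb P(N_n(Q_k^W)>a)\le 2\mathbb P(\tilde N_n>a)$. Letting $n\uparrow\infty$ (monotone convergence in $n$ on the left, convergence in probability on the right) then gives
$$\mathbb P\big(N(Q_k^W)>a\big)\le 2\,\mathbb P(\tilde N\ge a)$$
uniformly in $k$. This yields tightness, but not yet convergence.

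To upgrade tightness to convergence, I would use a conditional version of the same inequality. Condition on $Q_k^W$ and apply Jensen's inequality to the convex function $N_n$: since $Q_k^W=\mathbb E[Q_m^W\mid Q_k^W]$,
$$N_n(Q_k^W)\le \mathbb E\big[N_n(Q_m^W)\mid Q_k^W\big].$$
Combined with the lower bound, this gives
$$\mathbb E\big[\big(N(Q_k^W)-\tilde N\big)^+\wedge 1\big]\to0.$$
To reach this I would pass $m\to\infty$ and then $n\to\infty$, using uniform integrability of $N_n(Q_m^W)\wedge C$ and the fact that $\tilde N_n\uparrow\tilde N$. Together with the lower bound, this gives $N(Q_k^W)\to\tilde N$ in probability. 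It also gives $\tilde N<\infty$ almost surely, since $\tilde N$ is the limit in probability of the real random variables $\tilde N_n$. Therefore $N$ is a measurable semi-norm.
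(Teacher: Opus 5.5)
\textbf{Verdict: there is a genuine gap, in the upper bound.} The paper gives no proof of this lemma; it is quoted from \cite{MN02}, so your argument has to stand on its own. Your reduction, the lower bound, and the symmetrization bound $\mathbb{P}(N(Q_k^W)>a)\le 2\,\mathbb{P}(\tilde N\ge a)$ are correct.

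The gap is at exactly the step you flagged. Jensen gives $N_n(Q_k^W)\le \mathbb{E}[N_n(Q_m^W)\mid Q_k^W]$ for the \emph{untruncated} seminorm. Letting $m\to\infty$ therefore requires $N_n(Q_m^W)\to\tilde N_n$ in $L^1$, i.e.\ uniform integrability of $\{N_n(Q_m^W)\}_m$ itself. The resulting bound $N(Q_k^W)\le\mathbb{E}[\tilde N\mid Q_k^W]$ must then be turned into convergence. You can do this by L\'evy's upward theorem, which also needs $\bigvee_k\sigma(Q_k^W)=\mathcal F$ up to null sets. Or you can take expectations and use the lower bound. Either way you need $\mathbb{E}\tilde N<\infty$.

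Uniform integrability of $N_n(Q_m^W)\wedge C$ is trivially true and does not help. The map $x\mapsto x\wedge C$ is concave, so Jensen only yields $\mathbb{E}[N_n(Q_m^W)\wedge C\mid Q_k^W]\le \mathbb{E}[N_n(Q_m^W)\mid Q_k^W]\wedge C$, which is the wrong direction. The truncated form of your inequality is therefore not available. Tightness alone does not give these integrability facts, so an ingredient is genuinely missing.

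There are two ways to close the gap.

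\textbf{(i) Supply the integrability.} Your tail bound gives $t$ and $q_0>\tfrac12$ with $\inf_k\mathbb{P}(N(Q_k^W)\le t)\ge q_0$. Apply Fernique's argument to the finite-dimensional Gaussian vectors $Q_k^W$; its constants depend only on $t$ and $q_0$, so it gives $\sup_k\mathbb{E}\exp(c\,N(Q_k^W)^2)<\infty$. Since $N_n\le N$, the family $\{N_n(Q_m^W)\}_{n,m}$ is uniformly integrable, and $\mathbb{E}\tilde N<\infty$ by Fatou. Set $N_k:=N(Q_k^W)$. From $\mathbb{E}N_n(Q_k^W)\le\mathbb{E}N_n(Q_m^W)\to\mathbb{E}\tilde N_n\le\mathbb{E}\tilde N$ you get $\mathbb{E}N_k\le\mathbb{E}\tilde N$, and then
\[
\mathbb{E}(N_k-\tilde N)^+=\mathbb{E}N_k-\mathbb{E}\tilde N+\mathbb{E}(\tilde N-N_k)^+\le\mathbb{E}(\tilde N-N_k)^+\longrightarrow 0 .
\]
The limit follows from your lower bound and dominated convergence, since $0\le(\tilde N-N_k)^+\le\tilde N$. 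Only the unconditional form of Jensen is needed here.

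\textbf{(ii) Avoid integrability by using Anderson's inequality instead of Jensen.} For each fixed $y$, $\mathbb{P}(N_n(Q_k^W+y)\le a)\le\mathbb{P}(N_n(Q_k^W)\le a)$. Integrate over the independent vector $(Q_m-Q_k)^W$, then let $m\to\infty$ and $n\to\infty$. This gives the stochastic domination $\mathbb{P}(N_k>a)\le\mathbb{P}(\tilde N\ge a)$ for all $a$. Unlike Jensen, this survives truncation: $\mathbb{E}[N_k\wedge C]\le\mathbb{E}[\tilde N\wedge C]$. Apply the display to $N_k\wedge C$ and $\tilde N\wedge C$, then let $C\to\infty$; this yields $\mathbb{P}(N_k>\tilde N+\delta)\to0$ with no moment assumptions.
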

The next two lemmas will be basic ingredients in the computation of the
numerical Onsager-Machlup action functional. Their proofs can be found in \cite{Har02}.
\begin{lem}[\cite{Har02}\label{no random function} Theorem $6$]
	Let $N$ be a measurable norm on $H^1$. Then,
	$$\lim_{\varepsilon\to 0}\E\Big(\exp\Big(\int_{0}^{1}h(s)dW_s\Big)|\tilde{N}<\varepsilon\Big)=1,$$
	for all $h\in L^2([0,1]).$
\end{lem}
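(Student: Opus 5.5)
The plan is to prove the two matching bounds
\[
\liminf_{\varepsilon\to0}\E\big(e^{\xi}\,\big|\,\tilde N<\varepsilon\big)\ge 1
\qquad\text{and}\qquad
\limsup_{\varepsilon\to0}\E\big(e^{\xi}\,\big|\,\tilde N<\varepsilon\big)\le 1,
\qquad \xi:=\int_0^1 h(s)\,dW_s .
\]
The structure is: lower bound by symmetry, then uniform integrability, then concentration of $\xi$ at $0$ under the conditioning.

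\emph{Lower bound.} Since $N$ is a norm, $\tilde N(-W)=\tilde N(W)$ a.s. This can be seen along an approximating sequence, because $Q_n^{-W}=-Q_n^W$. Hence the event $\{\tilde N<\varepsilon\}$ is invariant under $W\mapsto -W$, while $\xi\mapsto-\xi$. So the conditional law of $\xi$ is symmetric and $\E(\xi\,|\,\tilde N<\varepsilon)=0$. Jensen's inequality then gives $\E(e^{\xi}\,|\,\tilde N<\varepsilon)\ge 1$ for every $\varepsilon$.

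\emph{Uniform integrability.} Approximating $\tilde N$ by $N(Q_n^W)$, the set $\{\tilde N<\varepsilon\}$ is (up to null sets) a symmetric convex set in the Gaussian space. The slab $\{|\xi|\le t\}$ is also symmetric convex, so the Khatri--\v{S}id\'ak (Anderson) inequality gives
\[
\mathbb P\big(|\xi|\le t,\ \tilde N<\varepsilon\big)\ \ge\ \mathbb P(|\xi|\le t)\,\mathbb P(\tilde N<\varepsilon).
\]
Equivalently, $\mathbb P(|\xi|>t\,|\,\tilde N<\varepsilon)\le \mathbb P(|\xi|>t)\le 2e^{-t^2/(2\|h\|^2)}$, uniformly in $\varepsilon$. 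Consequently the family $\{e^{c\xi}\}$ is uniformly integrable under the conditional measures for every $c$. The upper bound therefore reduces to showing $\xi\to0$ in conditional probability as $\varepsilon\to0$.

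\emph{Main obstacle: concentration.} First reduce by density. Split $h=h_1+h_2$ with $h_1$ in a convenient dense class and $\|h_2\|_{L^2}$ small. By H\"older, $\E(e^{\xi}\,|\,\cdot)\le \E(e^{p\int h_1dW}\,|\,\cdot)^{1/p}\,\E(e^{q\int h_2dW}\,|\,\cdot)^{1/q}$. The Anderson tail bound controls the second factor uniformly in $\varepsilon$ by roughly $e^{C q\|h_2\|^2}$, which is close to $1$.

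For $h_1$, set $H_1=\int_0^\cdot h_1$ and decompose $W=\frac{\xi_1}{\|h_1\|^2}H_1+W'$, where $W'$ is independent of $\xi_1=\int h_1\,dW$. Conditionally on $\xi_1=x$, the probability of $\{\tilde N<\varepsilon\}$ equals the probability that $W'$ lies in the symmetric convex set shifted by $xH_1/\|h_1\|^2$. By Anderson's inequality this is maximised at $x=0$ and is nonincreasing in $|x|$. Since $N$ is a norm on $H^1$, $N(H_1)>0$. So for $|x|>\delta$ the shifted set is at $\tilde N$-distance at least about $\delta N(H_1)/\|h_1\|^2$ from the origin. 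The ratio $\mathbb P(\tilde N(W'+aH_1)<\varepsilon)/\mathbb P(\tilde N(W')<\varepsilon)$ should then tend to $0$ as $\varepsilon\to0$ for fixed $a\neq0$. I would justify this via the triangle inequality, which forces $\tilde N(W')\ge |a|N(H_1)-\varepsilon$, combined with the positivity and continuity of small-ball probabilities from the definition of a measurable norm.

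This shift-ratio estimate is the step I expect to be delicate. If the triangle-inequality route fails because $W'$ is not small, I would fall back on the Cameron--Martin formula, which expresses the shifted probability as $\E\big(e^{-a\xi_1-a^2\|h_1\|^2/2}\,;\,\tilde N<\varepsilon\big)$, and argue by a bootstrap using the symmetry from the first step. Once $\xi_1\to0$ in conditional probability, uniform integrability gives $\limsup_{\varepsilon\to0}\E(e^{\xi}\,|\,\tilde N<\varepsilon)\le1$, and combined with the lower bound this gives the stated limit.
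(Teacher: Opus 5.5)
Your lower bound (symmetry of $\{\tilde N<\varepsilon\}$ plus Jensen) is sound. So is the uniform sub-Gaussian tail bound from the Khatri--\v{S}id\'ak inequality, and so is the H\"older/density reduction. The paper gives no proof of this lemma and only cites \cite{Har02}; these three steps are the skeleton of the standard argument (Shepp--Zeitouni, Harg\'e).

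\textbf{The gap.} The step you flag as delicate is not a technicality; it is the whole content of the lemma. The triangle inequality only shows that, once $2\varepsilon<|a|N(H_1)$, the events $\{\tilde N(W'+aH_1)<\varepsilon\}$ and $\{\tilde N(W')<\varepsilon\}$ are disjoint. Two disjoint events whose probabilities both tend to $0$ can have any ratio. Nothing in your argument distinguishes $W'$ from $W$, and for $W$ the conclusion is false: by Cameron--Martin and your own Jensen bound,
\[
\frac{\mathbb P(\tilde N(W+aH_1)<\varepsilon)}{\mathbb P(\tilde N(W)<\varepsilon)}=e^{-a^2\|h_1\|^2/2}\,\E\big(e^{a\xi_1}\,\big|\,\tilde N<\varepsilon\big)\ \ge\ e^{-a^2\|h_1\|^2/2}.
\]
What really makes the ratio vanish for $W'$ is that $H_1$ lies outside the Cameron--Martin space of $W'$. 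Turning that into a quantitative small-ball estimate for an arbitrary measurable norm is equivalent to the concentration you are trying to prove.

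The Cameron--Martin fallback does not help. The shift $aH_1$ is not admissible for $W'$, and applying the formula to $W$ only reproduces the display above, which restates the claim; the bootstrap is circular. Even granting $g_\varepsilon(a)/g_\varepsilon(0)\to0$ for $g_\varepsilon(x)=\mathbb P(\tilde N(W'+xH_1/\|h_1\|^2)<\varepsilon)$, monotonicity alone does not yield concentration. You would also need a lower bound on $\mathbb P(\tilde N(W)<\varepsilon)=\int\gamma\, g_\varepsilon$ in terms of $g_\varepsilon$ near $0$, for example from log-concavity of $g_\varepsilon$.

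\textbf{The fix.} Choose the dense class so that concentration is trivial. Take $h_1$ for which $\xi_1=\int_0^1h_1\,dW$ satisfies $|\xi_1|\le C_{h_1}\tilde N$ a.s. In general these are the $h_1$ representing continuous linear functionals on the completion of $(H^1,N)$; their density is the standard abstract-Wiener-space fact.

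For the norms actually used in the paper, note that $\tilde N_H=\|B^H\|_\infty\le\|B^H\|_\beta$. Take $h_1(s)=\int_s^1K^H(t,s)\,\mu(dt)$ with $\mu$ a finite measure. Then $\xi_1=\int_0^1B^H_t\,\mu(dt)$, so $|\xi_1|\le|\mu|([0,1])\,\|B^H\|_\infty$. These $h_1$ are dense in $L^2$: if $g$ is orthogonal to all of them, then $K^Hg\equiv0$, hence $g=0$.

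For such $h_1$, on $\{\tilde N<\varepsilon\}$ you have $|\xi_1|\le C_{h_1}\varepsilon$ deterministically. Hence $\E(e^{p\xi_1}\mid\tilde N<\varepsilon)\le e^{pC_{h_1}\varepsilon}\to1$. Your H\"older step with $\E(e^{q\xi_2}\mid\tilde N<\varepsilon)\le\E\cosh(q\xi_2)=e^{q^2\|h_2\|^2/2}$ then gives $\limsup\le e^{q\|h_2\|^2/2}$. Letting $\|h_2\|_{L^2}\to0$ finishes the proof. The decomposition $W=\frac{\xi_1}{\|h_1\|^2}H_1+W'$ and the shifted small-ball ratio can be dropped entirely.
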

We recall that an operator $K:H\to H$ is nuclear iff
$$\sum_{n=1}^{\infty}|\langle Ke_n,g_n\rangle|<\infty,$$
for all $B=(e_n)_n, B'=(g_n)_n$ orthonormal sequences in $H$. We define the trace of a nuclear operator $K$ by
$$Tr K=\sum_{i=1}^{\infty}\langle Ke_n,e_n\rangle,$$
for any $B=(e_n)_n$ orthonormal sequence in $H$. The definition is independent of the sequence we have chosen. Given a symmetric function $f\in L^2([0, 1]^2),$ the Hilbert-Schmidt operator $K(f): H \to H$ associated with $f$, defined by
\begin{align}
	(K(f))(h)(t)=\int_{0}^{t}f(t,u)h(u)du,\nonumber
\end{align}
is nuclear iff $\sum_{n=1}^{\infty}|\langle Ke_n,g_n\rangle|<\infty$ for all $B=(e_n)_n$ orthonormal sequence in $H$. If $f$ is continuous and the operator $K(f)$ is nuclear, we can compute its trace as follows:
$$Tr(f):=Tr K(f)=\int_{0}^{1}f(s,s)ds.$$
\begin{lem}[\cite{Har02}\label{key lemma} Theorem $8$]
	Let $f$ be a symmetric function in $L^2([0,1]^2)$ and let $N$ be a measurable norm. If $K(f)$ is nuclear, then
	\begin{align}
		\lim_{\varepsilon\to 0} E \Big(\exp\Big(\int_{0}^{1}\int_{0}^{1}f(s,t)dW_s dW_t\Big)|\tilde{N}<\varepsilon\Big)= e^{-Tr(f)}.\nonumber
	\end{align}
\end{lem}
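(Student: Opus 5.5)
The plan is to diagonalize the second-chaos variable and reduce everything to first-chaos statements, which are handled by Lemma~\ref{no random function}.

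\textbf{Step 1: spectral decomposition.} Since $f$ is symmetric and $K(f)$ is nuclear, the spectral theorem gives an orthonormal system $(e_i)_i$ in $H=L^2([0,1])$ and real eigenvalues $(\lambda_i)_i$ with $\sum_i|\lambda_i|<\infty$ and
\begin{align*}
f(s,t)=\sum_i\lambda_i e_i(s)e_i(t)\quad\text{in } L^2([0,1]^2),\qquad Tr(f)=\sum_i\lambda_i .
\end{align*}
Put $\xi_i=\int_0^1 e_i(s)\,dW_s$; these are i.i.d.\ $N(0,1)$. Expanding the double Wiener integral in this basis gives
\begin{align*}
\int_0^1\!\!\int_0^1 f(s,t)\,dW_s\,dW_t=\sum_i\lambda_i(\xi_i^2-1),
\end{align*}
with convergence in $L^2(\Omega)$. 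So the claim is equivalent to
\begin{align*}
\mathbb E\Big(\exp\Big(\sum_i\lambda_i\xi_i^2\Big)\Big|\tilde N<\varepsilon\Big)\to1 .
\end{align*}
Write $A_\varepsilon=\{\tilde N<\varepsilon\}$.

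\textbf{Step 2: finite part.} Fix $m$ and set $S_m=\sum_{i\le m}\lambda_i\xi_i^2$.
\begin{itemize}
\item Apply Lemma~\ref{no random function} with $h=\pm c\,e_i$ and add the two results. This gives $\mathbb E(\cosh(c\xi_i)\mid A_\varepsilon)\to1$ for every $c$. Since $\cosh(c x)-1\ge c^2x^2/2$, we get $\mathbb E(\xi_i^2\mid A_\varepsilon)\to0$, so $\xi_i\to0$ in conditional probability.
\item Hence $S_m\to0$ in conditional probability.
\item Uniform integrability of $e^{pS_m}$ under $\mathbb P(\cdot\mid A_\varepsilon)$ comes from $e^{p\lambda_i\xi_i^2}\le e^{p|\lambda_i|\xi_i^2}$ and the elementary bound $e^{a x^2}\le\cosh(\sqrt{2a}\,x)$, so each factor is dominated by a $\cosh$ whose conditional expectation tends to $1$. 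By H\"older over the finitely many indices, $\sup_{\varepsilon}\mathbb E(e^{pS_m}\mid A_\varepsilon)<\infty$ for every $p$.
\item Therefore $\mathbb E(e^{pS_m}\mid A_\varepsilon)\to1$ for every $p$.
\end{itemize}

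\textbf{Step 3: tail.} Set $R_m=\sum_{i>m}\lambda_i\xi_i^2$. I want
\begin{align*}
\limsup_{m\to\infty}\sup_{\varepsilon}\mathbb E(e^{\pm pR_m}\mid A_\varepsilon)\le1 .
\end{align*}
\begin{itemize}
\item Split $R_m$ into positive and negative eigenvalue parts. The factors $e^{-|\lambda_i|\xi_i^2}\le1$ need no control.
\item For the factors $e^{p\lambda_i^+\xi_i^2}$, which are increasing in $|\xi_i|$, use that $A_\varepsilon$ is a symmetric convex set for the Gaussian measure. Anderson's inequality (or the Gaussian correlation inequality) then gives
\begin{align*}
\mathbb E\big(e^{pR_m^+}\mathbb I_{A_\varepsilon}\big)\le\mathbb E\big(e^{pR_m^+}\big)\,\mathbb P(A_\varepsilon)=\prod_{i>m}(1-2p\lambda_i^+)^{-1/2}\,\mathbb P(A_\varepsilon).
\end{align*}
This product tends to $1$ as $m\to\infty$ because $\sum_i|\lambda_i|<\infty$.
\item For the opposite sign $e^{-pR_m}$, the roles of the positive and negative eigenvalue parts are exchanged, and the same bound applies.
\end{itemize}

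\textbf{Step 4: combine.} H\"older's inequality together with Steps~2 and~3 gives
\begin{align*}
\limsup_{\varepsilon\to0}\mathbb E\big(e^{S_m+R_m}\mid A_\varepsilon\big)\le 1+o_m(1),
\end{align*}
and the reverse H\"older inequality, $\mathbb E(e^{S})\ge \mathbb E(e^{qS_m})^{1/q}\,\mathbb E(e^{-q'R_m})^{-q'/q}$ type, gives the matching lower bound. Letting $m\to\infty$ yields the limit $1$, and multiplying back by $e^{-\sum_i\lambda_i}$ gives $e^{-Tr(f)}$.

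\textbf{Main obstacle.} The delicate step is Step~3: the tail bound must be uniform in $\varepsilon$. This is where the symmetric convex structure of $\{\tilde N<\varepsilon\}$ is used, via the correlation or Anderson inequality. A secondary concern in Step~1 is justifying that the double integral equals $\sum_i\lambda_i(\xi_i^2-1)$ when $f$ is only $L^2$. Nuclearity is exactly what guarantees that $\sum_i\lambda_i$ converges, so that the centering term equals $Tr(f)$.
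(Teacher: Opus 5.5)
There is a genuine gap in the uniform-integrability bullet of Step 2. The inequality $e^{ax^2}\le\cosh(\sqrt{2a}\,x)$ is false. Comparing Taylor coefficients gives $(2a)^k/(2k)!\le a^k/k!$, so in fact $\cosh(\sqrt{2a}\,x)\le e^{ax^2}$. More generally, no bound $e^{ax^2}\le C\cosh(cx)$ can hold for $a>0$, because the left side is the exponential of a quadratic.

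This is not a cosmetic slip. First-chaos information (Lemma~\ref{no random function}, which controls every $\mathbb{E}(e^{c\xi_i}\mid A_\varepsilon)$) cannot control $\mathbb{E}(e^{a\xi_i^2}\mid A_\varepsilon)$. For example, $X_\varepsilon=\varepsilon Y$, with $Y$ of density proportional to $e^{-|y|^{3/2}}$, satisfies $\mathbb{E}e^{cX_\varepsilon}\to 1$ for every $c$, while $\mathbb{E}e^{aX_\varepsilon^2}=\infty$ for every $\varepsilon>0$ and $a>0$. The correlation-inequality bound from your Step 3 does not rescue this either. It only gives $\mathbb{E}(e^{a\xi_i^2}\mid A_\varepsilon)\le(1-2a)^{-1/2}$, which is vacuous for $a\ge 1/2$, and H\"older over $m$ factors even inflates the exponents to $mp|\lambda_i|$.

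The large eigenvalues cannot be avoided: the lemma is stated for arbitrary nuclear $f$, and the paper applies it to $c\,\mathcal{J}^n_{12}$ for every $c\in\mathbb{R}$ via Lemma~\ref{Separation lemma}. For the finitely many eigenvalues with $p|\lambda_i|\ge 1/2$, you have established neither $\sup_\varepsilon\mathbb{E}(e^{pS_m}\mid A_\varepsilon)<\infty$ nor $\mathbb{E}(e^{pS_m}\mid A_\varepsilon)\to 1$. So the upper bound in Step 4 is unproved. The lower bound is unaffected, since it only needs convergence in conditional probability together with your Step 3.

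The missing ingredient is uniform-in-$\varepsilon$ sub-Gaussian control of $\xi_i$ given $A_\varepsilon$ with an arbitrarily small variance proxy. This has to come from the structure of the measurable norm, not from Gaussian correlation alone. The standard device is as follows. Let $B$ be the completion of $H^1$ under $N$. The functions $g\in L^2$ coming from $B^*$ are dense in $L^2$ and satisfy $|\int_0^1 g\,dW|\le C_g\tilde N$ a.s.

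Pick such a $g$ with $4a\|e_i-g\|_{L^2}^2<1$ and use $\xi_i^2\le 2(\int_0^1 g\,dW)^2+2(\int_0^1 (e_i-g)\,dW)^2$. The correlation inequality for a symmetric slab then gives $\mathbb{E}(e^{a\xi_i^2}\mid A_\varepsilon)\le e^{2aC_g^2\varepsilon^2}(1-4a\|e_i-g\|_{L^2}^2)^{-1/2}$. Hence $\limsup_{\varepsilon\to 0}\mathbb{E}(e^{a\xi_i^2}\mid A_\varepsilon)\le 1$ for every $a$. Doing this for all $i\le m$ simultaneously gives the same for $e^{pS_m}$, and then your Steps 3 and 4 go through.

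A smaller point concerns Step 3. The bound $\mathbb{E}(e^{pR_m^+}\mathbb{I}_{A_\varepsilon})\le\mathbb{E}(e^{pR_m^+})\,\mathbb{P}(A_\varepsilon)$ is the Gaussian correlation inequality (Royen), not a consequence of Anderson's inequality. If you prefer classical tools, H\"older with weights $\lambda_i^+/\sum_{j>m}\lambda_j^+$ reduces it to one-dimensional symmetric slabs, where the Khatri--\v{S}id\'ak inequality suffices.
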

\begin{lem}[\cite{LL98} \text{Theorem $1.1$}]
	Let $B^H$ be a fractional Brownian motion. Then
	$$\lim_{\varepsilon\to 0}\varepsilon^{\frac{1}{H}}\log \mathbb{P}\Big(\sup\limits_{t \in[0, 1]}\big|B^H\big| \leq \varepsilon\Big)=-C_H,$$
	where $C_H$ is a positive constant.
\end{lem}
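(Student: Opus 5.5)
The plan is to use self-similarity to turn the small-ball problem for $B^H$ on $[0,1]$ with radius $\varepsilon$ into a problem on a long time interval $[0,T]$ with radius $1$, where $T=\varepsilon^{-1/H}$. The existence of the constant $C_H$ will then come from a subadditivity argument, and its positivity and finiteness from two-sided estimates of order $\varepsilon^{-1/H}$.

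\textbf{Step 1 (rescaling).} Since $(B^H_{Tt})_{t}\overset{d}{=}(T^HB^H_t)_t$, we have
$$\mathbb{P}\Big(\sup_{t\in[0,1]}|B^H_t|\le\varepsilon\Big)=\mathbb{P}\Big(\sup_{t\in[0,T]}|B^H_t|\le 1\Big),\qquad T=\varepsilon^{-1/H}.$$
It therefore suffices to show that $\psi(T):=-\log\mathbb{P}(\sup_{[0,T]}|B^H|\le1)$ satisfies $\psi(T)/T\to C_H\in(0,\infty)$ as $T\to\infty$.

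\textbf{Step 2 (two-sided bounds).}
\begin{itemize}
\item For the upper bound on the probability, I would cut $[0,T]$ into $\lfloor T\rfloor$ unit blocks. By the local nondeterminism of fBm, the conditional variance of $B^H_{k+1}$ given $\{B^H_s,\,s\le k\}$ is bounded below by some $\sigma_H^2>0$, uniformly in $k$. Anderson's inequality, applied to the conditional Gaussian law, then gives a conditional probability of at most $\mathbb{P}(|\sigma_H Z|\le 1)=:q<1$ for each block. Iterating over the blocks yields $\psi(T)\ge cT$.
\item For the lower bound on the probability, I would use the Gaussian correlation inequality (Royen), or alternatively the Khatri--\v{S}id\'ak inequality combined with a chaining over blocks. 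Together with the Kuelbs--Li link between small balls and the metric entropy of the unit ball of the RKHS, this gives $\psi(T)\le CT$.
\end{itemize}

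\textbf{Step 3 (existence of the limit).} Following Li--Linde, I would replace $B^H$ by its Mandelbrot--van Ness moving-average part $Y_t=\int_{-\infty}^t\big((t-s)_+^{H-1/2}-(-s)_+^{H-1/2}\big)dW_s$. I would then split it into the contribution of the noise on $(k,k+1]$ and the contribution of the past. The past contribution is a smooth function on later blocks, so it can be handled by Anderson's inequality and a Cameron--Martin shift, at the cost of a factor $e^{-o(T)}$ per junction. Because the noise increments on disjoint blocks are independent, this yields approximate subadditivity
$$\psi(T+S)\le\psi(T)+\psi(S)+o(T+S).$$
A Fekete-type lemma then gives the existence of $\lim_{T\to\infty}\psi(T)/T=C_H$, and Step 2 shows that $0<C_H<\infty$. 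Undoing the rescaling of Step 1 gives the statement.

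\textbf{Main obstacle.} I expect Step 3 to be the hardest part, namely controlling the long-memory coupling between blocks so that the error in subadditivity is truly $o(T)$. The long-range dependence of fBm means that the past does not disappear. What I would try first is to truncate the memory kernel at a distance $M$. The far-past part then has RKHS norm growing sublinearly in $T$, and the Cameron--Martin formula charges exactly this norm. Letting $M\to\infty$ after $T\to\infty$ should then close the argument.
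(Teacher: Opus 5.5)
The paper does not prove this lemma; it is quoted from Li--Linde \cite{LL98}, so your proposal has to stand on its own. Steps 1 and 2 are fine: the rescaling is correct, and $cT\le\psi(T)\le CT$ is classical. Step 3, however, does not prove that the limit exists, for three concrete reasons. Write $\psi_X(T)=-\log\mathbb P(\sup_{[0,T]}|X|\le1)$.

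First, for $t\in[0,S]$ the Mandelbrot--van Ness process splits as $Y_{T+t}=m_T(t)+\tilde R(t)$, where $m_T(t)=\int_{-\infty}^{T}\big((T+t-s)^{H-1/2}-(-s)_+^{H-1/2}\big)dW_s$ and $\tilde R(t)=\int_T^{T+t}(T+t-s)^{H-1/2}dW_s$. The fresh-noise part $\tilde R$ is a Riemann--Liouville process, not a fractional Brownian motion. So this splitting can only relate $\psi_Y(T+S)$ to $\psi_Y(T)$ and $\psi_{\tilde R}(S)$, never to $\psi_Y(S)$.

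Second, an inequality $\psi(T+S)\le\psi(T)+\psi(S)+\dots$ is a lower bound on a probability. Anderson's inequality cannot give that, so everything rests on a Cameron--Martin shift, and $m_T$ is not an admissible shift. Indeed $m_T(0)=Y_T\neq0$, whereas every element of the RKHS $I^{H+1/2}_{0+}(L^2)$ of $\tilde R$ vanishes at $0$. Moreover $m_T-m_T(0)$ has the law of the $H$-self-similar process $Z_t=\int_{-\infty}^0\big((t-s)^{H-1/2}-(-s)^{H-1/2}\big)dW_s$, so it is smooth only away from the junction. Finally, $m_T$ is random and correlated with $\{\sup_{[0,T]}|Y|\le1\}$. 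Truncating the kernel at distance $M$ addresses none of this.

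Third, even granting $\psi(T+S)\le\psi(T)+\psi(S)+o(T+S)$, no Fekete-type lemma applies. The function $a(x)=x\big(2+\sin\log\log x\big)$ is nondecreasing for large $x$ and satisfies $x\le a(x)\le3x$ and $|a(T+S)-a(T)-a(S)|=o(T+S)$ uniformly in the split, yet $a(x)/x$ has no limit. You would need an $O(1)$ error, or an error $f(T+S)$ with $\int^\infty f(t)t^{-2}dt<\infty$ (de Bruijn--Erd\H{o}s).

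The missing idea is to run Fekete on the Riemann--Liouville process $R_t=\int_0^t(t-s)^{H-1/2}dW_s$ and to use superadditivity rather than subadditivity. We have $R_{T+t}=\int_0^T(T+t-s)^{H-1/2}dW_s+\tilde R(t)$, with $\tilde R\overset{d}{=}R$ independent of $\mathcal F_T=\sigma(W_s,\,s\le T)$. Conditioning on $\mathcal F_T$ and applying Anderson's inequality (valid for an arbitrary shift) gives exactly
\[
\mathbb P\Big(\sup_{[0,T+S]}|R|\le1\Big)\le\mathbb P\Big(\sup_{[0,T]}|R|\le1\Big)\,\mathbb P\Big(\sup_{[0,S]}|R|\le1\Big),
\]
i.e.\ $\psi_R(T+S)\ge\psi_R(T)+\psi_R(S)$ with no error term. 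Since $\psi_R$ is nondecreasing, $\lim_{T\to\infty}\psi_R(T)/T=\sup_{T>0}\psi_R(T)/T\ge\psi_R(1)>0$. This limit is finite by your Step 2: writing $Y=R+Z$ with $Z$ independent of $R$, Anderson's inequality gives $\mathbb P(\|R\|_\infty\le\varepsilon)\ge\mathbb P(\|Y\|_\infty\le\varepsilon)$.

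The same decomposition transfers the constant back to $Y$. The last inequality gives one direction. For the other, independence gives $\mathbb P(\|Y\|_\infty\le\varepsilon)\ge\mathbb P(\|R\|_\infty\le(1-\delta)\varepsilon)\,\mathbb P(\|Z\|_\infty\le\delta\varepsilon)$, and $-\log\mathbb P(\|Z\|_\infty\le\delta\varepsilon)=o(\varepsilon^{-1/H})$ (in fact only polylogarithmic, since $Z$ is self-similar and smooth on $(0,1]$). Letting $\delta\to0$ gives the statement with $C_H=c^{1/H}\lim_T\psi_R(T)/T\in(0,\infty)$, where $B^H\overset{d}{=}c\,Y$. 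This is the structure of Li--Linde's proof. It also makes your local-nondeterminism bound unnecessary, since positivity comes for free from superadditivity.
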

\begin{lem}[\cite{KLS95} \text{Lemma $3.1$}]\label{small ball of holder}
	Let $B^H$ be a fractional Brownian motion and let $0\leq \beta<H$. Then there exists constants $0<K_1\leq K_2<\infty$ depending on $H$ and $\beta$ such that for all $0<\varepsilon<1$
	$$-K_2\varepsilon^{-\frac{1}{H-\beta}}\leq \log \mathbb{P}\Big(\sup\limits_{0\leq s,t\leq 1}\big|\frac{|B^H_t-B^H_s|}{|t-s|^{\beta}}\big| \leq \varepsilon\Big)\leq -K_1\varepsilon^{-\frac{1}{H-\beta}}.$$
\end{lem}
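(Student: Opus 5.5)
The two bounds use different tools. Throughout, write $\|f\|_\beta:=\sup_{0\le s, t\le 1}|f(t)-f(s)|/|t-s|^{\beta}$. The natural scale is the mesh $h\in(0,1)$ at which the typical increment of $B^H$ over a step, of size $h^{H}$, equals the allowed Hölder oscillation $\varepsilon h^{\beta}$. Thus $h\asymp \varepsilon^{1/(H-\beta)}$, and the number of steps is $N=h^{-1}\asymp \varepsilon^{-\frac{1}{H-\beta}}$. Both bounds should show that $\log\mathbb{P}(\|B^H\|_\beta\le\varepsilon)$ is comparable to $-N$.

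\emph{Upper bound.} I would pick $h=1/N$ with $N$ an integer and $\varepsilon h^{\beta-H}=1$, up to an integer-part adjustment. The event $\{\|B^H\|_\beta\le\varepsilon\}$ is contained in
$$\bigcap_{k=0}^{N-1}\Big\{|B^H_{(k+1)h}-B^H_{kh}|\le \varepsilon h^{\beta}\Big\}.$$
I would condition successively on $\mathcal{G}_k=\sigma(B^H_{jh},\,j\le k)$. The key input is the local nondeterminism of fBm: there is $c_H>0$ with
$$\mathrm{Var}\big(B^H_{(k+1)h}-B^H_{kh}\,\big|\,\mathcal G_k\big)\ge c_H h^{2H}.$$
This can be proved from the Volterra representation \eqref{relation between B and W}. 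Indeed, the conditional variance is at least $\int_{kh}^{(k+1)h}K^H((k+1)h,r)^2\,dr$, and by self-similarity this equals $h^{2H}$ times $\int_{k}^{k+1}K^H(k+1,r)^2\,dr$, which I expect to be bounded below uniformly in $k$. Conditionally on $\mathcal{G}_k$ the increment is Gaussian, and Anderson's inequality says a centered Gaussian interval probability is maximized at zero mean. Hence each conditional probability is at most $\rho:=\mathbb{P}(|Z|\le c_H^{-1/2})<1$ with $Z\sim N(0,1)$. Iterating gives $\mathbb{P}\le\rho^{N}=\exp(-K_1\varepsilon^{-1/(H-\beta)})$.

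\emph{Lower bound.} First, by a Garsia--Rodemich--Rumsey / Ciesielski-type chaining argument, I would show that for some $\gamma\in(\beta,H)$ and a constant $C$,
$$\|f\|_\beta\le C\sup_{j\ge0}\sup_{0\le k<2^j}2^{j\beta}\,\big|f((k+1)2^{-j})-f(k2^{-j})\big|$$
for continuous $f$; the extra room $\gamma>\beta$ is there in case the plain dyadic supremum needs weighting, as noted below. The event on the right is an intersection of symmetric slabs of a Gaussian vector. So Šidák's inequality, or the Gaussian correlation inequality, gives the product bound
$$\mathbb{P}\Big(\|B^H\|_\beta\le\varepsilon\Big)\ge\prod_{j\ge0}\prod_{k<2^j}\mathbb{P}\Big(|Z|\le \tfrac{\varepsilon}{C}\,2^{j(H-\beta)}\Big).$$
Let $J$ be the level where $\varepsilon 2^{J(H-\beta)}\asymp1$. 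For $j\le J$, I would bound each factor below by $c\,\varepsilon 2^{j(H-\beta)}$; the logarithms then sum to $-O(2^J)$. For $j>J$, each factor is at least $1-\exp(-c\,4^{(j-J)(H-\beta)})$, so the logs sum to $-\sum_{j>J}2^{j}e^{-c4^{(j-J)(H-\beta)}}=-O(2^J)$. Both contributions are $O(\varepsilon^{-1/(H-\beta)})$, which yields the lower bound with some $K_2$.

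\emph{Main obstacle.} The most delicate step is the dyadic comparison of the Hölder norm. A naive chaining argument loses a logarithm or a constant factor in $\varepsilon$, which is harmless here; but it may require the weights $2^{j\gamma}$ with $\gamma$ slightly above $\beta$, and then the level-counting above has to be redone. The other point to check carefully is the uniform-in-$k$ lower bound in the local nondeterminism estimate, which should follow from the explicit kernel \eqref{K^H(r,u)}. If either step became awkward, I would instead cite the general small-ball results of Kuelbs--Li--Shao for Hölder norms of self-similar Gaussian processes, as the statement's attribution to \cite{KLS95} indicates.
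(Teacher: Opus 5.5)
\paragraph{Comparison with the paper.} The paper gives no argument for this lemma; it imports it from \cite{KLS95}. You instead give a self-contained proof along standard lines. For the upper bound you condition on grid values and combine a uniform conditional-variance bound with Anderson's inequality. For the lower bound you use dyadic chaining plus \v{S}id\'ak's inequality. Your route uses only the Volterra kernel, stationarity of increments and self-similarity, whereas the citation covers general Gaussian processes with stationary increments.

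For $0<\beta<H$ your argument is complete, and the two steps you hedge on are harmless. In the kernel formula for $K^H(r,u)$ the correction term is nonnegative for every $H\neq\frac12$, because $\frac12-H$ and $1-(u/\theta)^{\frac12-H}$ always have the same sign. Hence $K^H(r,u)\ge c_H(r-u)^{H-\frac12}$, and the conditional variance is at least $c_H^2h^{2H}/(2H)$ uniformly in $k$; no limiting argument is needed. No extra weighting is needed in the chaining either. Write $\Delta_{j,k}=f((k+1)2^{-j})-f(k2^{-j})$ and $\Delta_j=\max_k|\Delta_{j,k}|$. Then $|f(t)-f(s)|\le \Delta_m+2\sum_{j>m}\Delta_j$ whenever $|t-s|<2^{-m}$. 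So $\Delta_j\le A2^{-j\beta}$ gives $\|f\|_\beta\le 2^{\beta}\bigl(1+\frac{2}{2^{\beta}-1}\bigr)A$.

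\paragraph{The gap at $\beta=0$.} The genuine gap is the endpoint $\beta=0$, which the statement explicitly includes. There the tail $\sum_{j>m}\Delta_j$ has no decay, and the comparison your lower bound rests on is false. For example, $f(x)=\min\{M,\log_4(1/|x-\frac13|)\}$ has every dyadic increment bounded by $1$, since each level-$j$ grid point lies at distance at least $2^{-j}/3$ from $\frac13$. Yet its oscillation is of order $M$.

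Your suggested remedy, weights $2^{j\gamma}$ with $\gamma>\beta$, does not fix this. The event $\{\sup_{j,k}2^{j\gamma}|\Delta_{j,k}|\le \varepsilon/C\}$ has critical level $2^{J}\asymp\varepsilon^{-1/(H-\gamma)}$. The same count then only gives $\log\mathbb{P}\ge -K\varepsilon^{-1/(H-\gamma)}$, which has the wrong exponent.

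Two fixes work:
\begin{itemize}
\item \emph{Weights peaked at the critical level.} Impose $|\Delta_{j,k}|\le \varepsilon\,2^{-j\beta}2^{-\delta|j-J|}$ with $0<\delta<H-\beta$. The chaining bound $\Delta_m+2\sum_{j>m}\Delta_j$ is still at most $C_\delta\,\varepsilon\,2^{-m\beta}$, now also for $\beta=0$. The Gaussian thresholds become $2^{-(J-j)(H-\beta+\delta)}$ for $j\le J$ and $2^{(j-J)(H-\beta-\delta)}$ for $j>J$. So the \v{S}id\'ak product is still $\exp(-O(2^J))$.
\item \emph{Reduction to the sup norm.} Since $B^H_0=0$, you have $\|B^H\|_\infty\le \sup_{s,t}|B^H_t-B^H_s|\le 2\|B^H\|_\infty$. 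The case $\beta=0$ therefore reduces to the sup-norm small-ball estimate of \cite{LL98} stated just before this lemma.
\end{itemize}
Your upper bound needs no change at $\beta=0$.
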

Then, we define the following norms on $H^1$:
$$N_H(h)=\sup\limits_{t \in[0, 1]}\big|\int_{0}^{t}K^H(t,s)h'(s)ds\big|,$$
$$N_{H,\beta}(h)=\sup\limits_{t,r\in [0, 1]}\frac{\Big|\int_{0}^{t}K^H(t,s)h'(s)ds-\int_{0}^{r}K^H(t,s)h'(s)ds\Big|}{|t-r|^\beta},$$
for $0<\beta<H.$
\begin{lem}[\cite{MN02} \text{Lemma 6}]\label{norm}
	$N_H$ and $N_{H,\beta}$ with $0<\beta\leq H$ are measurable norms and we have $\tilde{N}_H=\|B^H\|_{\infty}$ and $\tilde{N}_{H,\beta}=\|B^H\|_{\beta}$.
\end{lem}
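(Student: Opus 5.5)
The plan is to identify $N_H(Q_n^W)$ and $N_{H,\beta}(Q_n^W)$ with the norms of a conditional expectation of $B^H$, and then use martingale convergence in a separable Banach space. Small-ball positivity will come from Lemma \ref{small ball of holder}. In line with the definition of $N_{H,\beta}$ given just before the lemma, I will treat $0<\beta<H$. At $\beta=H$ the identification $\tilde N_{H,H}=\|B^H\|_{H}$ cannot hold, since fBm paths are a.s. not $H$-Hölder.

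First I check that $N_H$ and $N_{H,\beta}$ are genuine norms on $H^1$. For $h\in H^1$ we have $h'\in L^2$, so $K^Hh'\in I^{H+\frac12}_{0+}(L^2)$, which is Hölder continuous of order $H$. Hence both quantities are finite, and they are clearly seminorms. If $N_H(h)=0$ then $K^Hh'=0$. Since $K^H$ is an isomorphism from $L^2$ onto $I^{H+1/2}_{0+}(L^2)$, this gives $h'=0$, and therefore $h=0$ because $h(0)=0$. The same conclusion holds for $N_{H,\beta}$, because $K^Hh'$ vanishes at $0$.

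Next, take an approximating sequence $Q_n$ with orthonormal basis $(h_i)_{i\le m_n}$ and let $P_n$ be the orthogonal projection in $L^2$ onto $\mathrm{span}\{h_i'\}$. By linearity,
\begin{align*}
\int_0^t K^H(t,s)(Q_n^W)'(s)\,ds=\sum_{i}\langle K^H(t,\cdot),h_i'\rangle_{L^2}\int_0^1 h_i'\,dW=\int_0^1 \big(P_nK^H(t,\cdot)\big)(s)\,dW_s .
\end{align*}
This equals $Y^n_t:=\E\big(B^H_t\,\big|\,\mathcal G_n\big)$, where $\mathcal G_n=\sigma\big(\int h_i'dW,\ i\le m_n\big)$ and $B^H_t=\int_0^tK^H(t,s)dW_s$. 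The $\sigma$-fields $\mathcal G_n$ increase because the $Q_n$ increase. Because $Q_n\to I$ strongly, they generate $\mathcal F^W$.

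Fix $\gamma\in(\beta,H)$. By Kolmogorov's criterion $B^H$ has $\gamma$-Hölder paths, so it takes values in the separable little-Hölder space $c^{\beta}_0$ (closure of smooth functions in $\|\cdot\|_\beta$) and in $C_0([0,1])$. Moreover $\E\|B^H\|_\beta<\infty$ by Fernique's theorem. The process $(Y^n)$ is a $c^\beta_0$-valued martingale closed by $B^H$. By Banach-space martingale convergence (Itô–Nisio type), $Y^n\to \E(B^H\mid\mathcal F^W)=B^H$ a.s. in $\|\cdot\|_\beta$, and hence also in $\|\cdot\|_\infty$. It follows that $N_{H,\beta}(Q_n^W)\to\|B^H\|_\beta$ and $N_H(Q_n^W)\to\|B^H\|_\infty$ in probability. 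The limit does not depend on the chosen sequence $Q_n$, as the definition of a measurable norm requires.

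Finally, $\mathbb P(\|B^H\|_\beta\le\varepsilon)>0$ for every $\varepsilon>0$ by the lower bound in Lemma \ref{small ball of holder}. The corresponding statement for the sup norm follows from \cite{LL98}, or from $\|B^H\|_\infty\le\|B^H\|_\beta$ since $B^H_0=0$.

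The main obstacle is justifying the $\mathcal G_n$-conditional expectation as a Bochner conditional expectation in a separable Banach space. This requires measurability of $B^H$ as a $c^\beta_0$-valued random variable, which is why the little-Hölder space and $\gamma>\beta$ are needed. It also requires identifying it pathwise with $Y^n$, which I would do by checking equality at each fixed $t$ and using continuity of both sides.
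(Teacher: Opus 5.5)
Your proof is correct, and you are right to exclude $\beta=H$. The paper defines $N_{H,\beta}$ only for $0<\beta<H$. At $\beta=H$, the law of the iterated logarithm for $B^H$ gives $\|B^H\|_H=\infty$ a.s., so the requirement $\tilde N<\infty$ a.s.\ fails. This is harmless for the paper, whose theorems only use $\beta<H-\frac14$. You also read the second integral in $N_{H,\beta}$ as $\int_0^rK^H(r,s)h'(s)\,ds$, which is the intended definition.

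The paper itself gives no proof of this lemma; it imports it from \cite{MN02}. The only tool it records for this purpose is the monotone-limit lemma for measurable semi-norms (\cite{MN02}, Lemma 1), quoted earlier in the appendix. That lemma points to the standard route. One approximates $N_H$ (resp.\ $N_{H,\beta}$) from below by suprema of $|(K^Hh')(t)|$ over finite sets of dyadic times (resp.\ of difference quotients over finite sets of dyadic pairs). Each such semi-norm is trivially measurable, since its value at $Q_n^W$ is a continuous function of finitely many variables $\int_0^1 P_ng\,dW$, and these converge to $\int_0^1 g\,dW$ in $L^2$. Path continuity of $B^H$ gives monotone convergence of the $\tilde N_m$ to $\|B^H\|_\infty$ (resp.\ $\|B^H\|_\beta$), and the monotone-limit lemma does the rest.

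You instead verify the definition directly, by recognising $K^H(Q_n^W)'$ as $\E(B^H\mid\mathcal G_n)$ in the separable space $c^\beta_0$. This gives almost sure, not merely in-probability, convergence. It treats both norms at once via $\|f\|_\infty\le\|f\|_\beta$, and it makes the independence of the limit from $(Q_n)$ evident. The cost is the Banach-space set-up you correctly identify:
\begin{itemize}
\item measurability of $B^H$ as a $c^\beta_0$-valued variable;
\item integrability of $\|B^H\|_\beta$, for which Kolmogorov or Garsia--Rodemich--Rumsey moment bounds suffice as an alternative to Fernique;
\item the identification of the Bochner conditional expectation through point evaluations at rational times.
\end{itemize}
The finite-supremum route needs only scalar Gaussian facts, but it hides the interchange of the limits $n\to\infty$ and $m\to\infty$ inside the monotone-limit lemma.

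Two small remarks. First, what you actually need is convergence of closed martingales $\E(X\mid\mathcal G_n)$, which holds in any Banach space. It\^o--Nisio also works once you choose an orthonormal basis adapted to the nested ranges of $Q_n$, so that $Y^n$ become partial sums of $\sum_i(\int_0^1h_i'\,dW)\,K^Hh_i'$. Second, your martingale step genuinely uses that the $Q_n$ increase. This is legitimate, because monotonicity is built into the paper's definition of an approximating sequence.
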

\subsection{Technical lemmas}
\begin{lem}[\cite{IW14} \text{pp $536$-$537$}]\label{Separation lemma}
	For a fixed $n\geqslant1$, let $I_1, \ldots, I_n $ be $n$ random variables defined on $(\Omega,\mathcal{F},\mathbb{P})$ and $\{A_\varepsilon;\varepsilon >0\}$ a family of sets in $\mathcal{F}$. Suppose that for any $c\in \mathbb{R}$ and any $i=1,\dots, n$, if we have
	\begin{align}
		\limsup _{\varepsilon \rightarrow 0} \E\Big(\exp(cI_i)\vert A_\varepsilon\Big)\leqslant 1.\nonumber
	\end{align}
	Then
	\begin{align}
		\lim_{\varepsilon\to 0} \E \left [\exp \left(\sum_{i=1}^{n}cI_i \right ) \bigg| A_\varepsilon \right ]= 1.\nonumber
	\end{align}
\end{lem}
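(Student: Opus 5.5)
The plan is to prove the upper bound $\limsup\le 1$ by Hölder's inequality, and the matching lower bound $\liminf\ge 1$ by a Cauchy--Schwarz trick that reuses the upper bound with $c$ replaced by $-c$. Throughout, $n$ and $c\in\mathbb{R}$ are fixed and we write $\E(\,\cdot\,|A_\varepsilon)$ for the conditional expectation $\E(\,\cdot\,\mathbb{I}_{A_\varepsilon})/\mathbb{P}(A_\varepsilon)$. This is a probability expectation, so Hölder's and the Cauchy--Schwarz inequalities apply to it directly.

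\emph{Upper bound.} Set $X=c\sum_{i=1}^{n}I_i$. The generalized Hölder inequality with $n$ exponents all equal to $n$ gives
\begin{align*}
\E\big(e^{X}\,\big|\,A_\varepsilon\big)=\E\Big(\prod_{i=1}^{n}e^{cI_i}\,\Big|\,A_\varepsilon\Big)\le\prod_{i=1}^{n}\Big(\E\big(e^{ncI_i}\,\big|\,A_\varepsilon\big)\Big)^{1/n}.
\end{align*}
Apply the hypothesis to each factor with the real constant $nc$ in place of $c$; this is allowed because the hypothesis holds for every $c\in\mathbb{R}$. Each factor then has $\limsup\le 1$. Since $n$ is finite, it follows that $\limsup_{\varepsilon\to0}\E(e^{X}|A_\varepsilon)\le 1$. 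The same argument with $-c$ gives $\limsup_{\varepsilon\to0}\E(e^{-X}|A_\varepsilon)\le 1$.

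\emph{Lower bound.} Write $1=e^{X/2}e^{-X/2}$ and apply the Cauchy--Schwarz inequality under the conditional expectation:
\begin{align*}
1=\Big(\E\big(e^{X/2}e^{-X/2}\,\big|\,A_\varepsilon\big)\Big)^2\le \E\big(e^{X}\,\big|\,A_\varepsilon\big)\,\E\big(e^{-X}\,\big|\,A_\varepsilon\big).
\end{align*}
The left side equals $1$ for every $\varepsilon$. By the upper bound for $-c$, for every $\eta>0$ we have $\E(e^{-X}|A_\varepsilon)\le 1+\eta$ once $\varepsilon$ is small. Hence $\E(e^{X}|A_\varepsilon)\ge (1+\eta)^{-1}$ for small $\varepsilon$, and therefore $\liminf_{\varepsilon\to0}\E(e^{X}|A_\varepsilon)\ge 1$.

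Combining the two bounds gives $\lim_{\varepsilon\to0}\E(\exp(c\sum_i I_i)|A_\varepsilon)=1$, which is the claim.

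The only point needing care is finiteness. The relevant conditional expectations must be finite for small $\varepsilon$, which is exactly what the hypothesis provides (applied with constants $\pm nc$). Also, the Cauchy--Schwarz step never involves $0\cdot\infty$, because both factors on the right are finite and the left side is $1$, so neither factor can vanish. I do not expect any real obstacle beyond this bookkeeping.
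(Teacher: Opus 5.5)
Your proof is correct and is essentially the standard argument behind this lemma: Hölder with $n$ equal exponents and the hypothesis at $nc$ for the $\limsup$, then $1\le \E(e^{X}|A_\varepsilon)\,\E(e^{-X}|A_\varepsilon)$ together with the upper bound at $-c$ for the $\liminf$. The paper gives no proof and only cites Ikeda--Watanabe. The one implicit assumption is $\mathbb{P}(A_\varepsilon)>0$, which is needed for the conditional expectations to be defined; the statement also assumes it tacitly.
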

\begin{lem}\label{girsanov}
	Let $\eta$ be the process defined by \eqref{eta}. Then $\eta$ is adapted and
	$$\E\Big(\exp\Big(\int_{0}^{1}\eta_s dW_s-\frac{1}{2}\int_{0}^{1}\eta_s^2ds\Big)=1.$$
\end{lem}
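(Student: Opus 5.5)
We verify the two claims of the lemma separately: adaptedness of $\eta^n$, and Novikov's condition. Together these give that the stochastic exponential in \eqref{eta} is a true martingale, hence has expectation one.

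\textbf{Adaptedness.} Recall $\eta^n_s=\big((K^H)^{-1}b(\tilde X^n_{\eta_n(\cdot)})\big)(s)-\dot\phi^n_s$. The term $\dot\phi^n$ is deterministic, so only the first part needs checking.

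In the singular case $H<\frac12$ we use the representation
$$\big((K^H)^{-1}h\big)(s)=d_H^{-1}s^{-\a}I^{\a}_{0+}\big(u^{\a}h'\big)(s),$$
with $h'(u)=b(\tilde X^n_{\eta_n(u)})$. In the regular case $H>\frac12$ we use $d_H^{-1}s^{\a}D^{\a}_{0+}u^{-\a}h'$ together with the Weyl representation \eqref{weyl representation}. In both cases the value at time $s$ depends only on $\{h'(u),u\le s\}$, because both $I^\a_{0+}$ and $D^\a_{0+}$ are Volterra (causal) operators.

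It remains to see that $h'(u)$ is $\mathcal F_u$-measurable. Since $\tilde X^n_{\eta_n(u)}$ is a functional of $B^H$ up to time $\eta_n(u)\le u$, and $B^H_t=\int_0^tK^H(t,r)dW_r$ is $\mathcal F_t$-measurable, this holds. Hence $\eta^n$ is adapted.

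\textbf{Novikov's condition.} The key point is that the random part of $\eta^n$ is pathwise bounded by a deterministic function in $L^2([0,1])$. We handle the two regimes separately.

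\emph{Case $H<\frac12$.} Using only $|b|\le\|b\|_\infty$, exactly the computation in \eqref{ready1} gives
$$\Big|s^{-\a}I^\a_{0+}\big(u^\a b(\tilde X^n_{\eta_n(u)})\big)(s)\Big|\le\|b\|_\infty\frac{\b(1+\a,\a)}{\G(\a)}s^{\a},$$
which is uniformly bounded.

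\emph{Case $H>\frac12$.} Here $h'$ is a step function $\sum_k b(\tilde X^n_{t_k})\mathbb I_{[t_k,t_{k+1})}$, and the Weyl representation involves increments. I will use the explicit formula computed in Section \ref{TMPP}: for $t\in(t_i,t_{i+1})$,
$$D^\a_{0+}\big(u^{-\a}b_{\eta_n(u)}\big)(t)=\frac{\G(1-\a)}{\G(1-2\a)}t^{-2\a}b_{t_i}+\frac{\a}{\G(1-\a)}\sum_{k<i}(b_{t_i}-b_{t_k})\int_{t_k}^{t_{k+1}}\frac{u^{-\a}du}{(t-u)^{\a+1}}.$$
The first term, after multiplying by $s^\a$, is bounded by $C\|b\|_\infty s^{-\a}$. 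Since $2\a<1$, this is in $L^2$.

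The second term is bounded by $2\|b\|_\infty\,\a\,\G(1-\a)^{-1}s^\a\int_0^{t_i}u^{-\a}(t-u)^{-\a-1}du\le C\|b\|_\infty s^{\a}\,t_i^{-\a}\,(t-t_i)^{-\a}$, for $i\ge1$.

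\textbf{Main obstacle.} The hardest step is controlling $(t-t_i)^{-\a}$ near the left endpoint of each cell. This singularity is square integrable on $(t_i,t_{i+1})$ precisely because $2\a=2H-1<1$. For fixed $n$ there are only finitely many cells, so we obtain a deterministic bound
$$\big|\big((K^H)^{-1}b(\tilde X^n_{\eta_n})\big)(s)\big|\le g_n(s),\qquad g_n\in L^2([0,1]).$$

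\textbf{Conclusion.} With the deterministic bound in hand,
$$\int_0^1|\eta^n_s|^2ds\le 2\|g_n\|_{L^2}^2+2\|\dot\phi^n\|_{L^2}^2<\infty$$
almost surely, with a deterministic right-hand side. Here $\dot\phi^n\in L^p\subset L^2$ since $p>1/H>2$ in the singular case, and $\dot\phi^n\in L^2$ by assumption in the regular case.

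Therefore $\E\exp\big(\tfrac12\int_0^1|\eta^n_s|^2ds\big)<\infty$. Novikov's criterion then yields that the stochastic exponential is a martingale, and hence
$$\E\exp\Big(\int_0^1\eta^n_sdW_s-\frac12\int_0^1|\eta^n_s|^2ds\Big)=1.$$
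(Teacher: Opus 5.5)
Your proof is correct. The paper does not actually prove this lemma; it only says the argument is similar to Lemma 10 of \cite{MN02}. In that continuous-time setting the drift $u\mapsto b(\phi_u+B^H_u)$ is H\"older continuous in time. For $H>\frac12$ the Weyl integral inside $(K^H)^{-1}$ is then controlled through the H\"older norm of $B^H$, so $\int_0^1\eta_s^2\,ds$ is bounded only by a random quantity whose exponential integrability must be established. That mechanism does not carry over verbatim here, because $u\mapsto b(\tilde X^n_{\eta_n(u)})$ jumps at every node $t_i$; compare the separate treatment of $|\eta_n(s)-\eta_n(r)|$ in the paper's estimate of $\mathcal{I}^n_{32}$.

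Your route instead exploits the discrete structure directly. You use only $|b|\le\|b\|_\infty$ and the explicit action of $D^{\alpha}_{0+}$ on $u^{-\alpha}$ times a step function. This isolates the node singularities $(t-t_i)^{-\alpha}$, which are square integrable because $2\alpha=2H-1<1$, and yields a deterministic $L^2$ majorant. Novikov's condition is then immediate, and no Gaussian (Fernique-type) estimate is needed. The trade-off is that your majorant blows up as $n\to\infty$: harmless for fixed $n$, but it gives no uniformity in $n$.

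You were also right to take $\dot\phi^n\in L^2$ from the hypothesis $\phi^n-x\in\mathcal{H}^2$ of Theorem \ref{th:regular case}. The standing assumption $p>1/H$ of Section \ref{framework} permits $p<2$ when $H>\frac12$, and then $\int_0^1\dot\phi^n_s\,dW_s$ would not be defined.
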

\begin{proof}
	The proof of this lemma is similar to Lemma $10$ of \cite{MN02}. So the proof will be omitted.
\end{proof}
\begin{lem}\label{singular reace class}
	Assume $\frac{1}{4}<H<\frac{1}{2}$. Let $f_n$ be the function defined by
	\begin{align}
		f_n(s,r)&=\sum_{k=0}^{n-1}\frac{1}{\Gamma(\a)}s^{-\a}\mathbb{I}_{s\in[t_k,t_{k+1})}\sum_{j=0}^{k-1}K^H(t_j,r)b'(\phi^n_{t_j})\int_{t_j}^{t_{j+1}}u^{\a}(s-u)^{\a-1}du\mathbb{I}_{r\leq t_j}\nonumber\\&\quad+\sum_{k=0}^{n-1}\frac{1}{\Gamma(\a)}s^{-\a}b'(\phi^n_{t_k})K^H(t_k,r)\mathbb{I}_{s\in[t_k,t_{k+1})}\int_{t_k}^{s}u^{\a}(s-u)^{\a-1}du\mathbb{I}_{r\leq t_k},\nonumber
	\end{align}
	where $s,r\in(0,1]$ and $\phi^n$ is such that $\phi^n-x\in\mathcal{H}^p$ for $p>\frac{1}{H}$. Let $\tilde{f}_n$ be the
	symmetrization of $f_n$. Then the operator $K(\tilde{f}_n)$ defined by $K( \tilde{f}_n)(h)(s)=\int_{0}^{s}\tilde{f}_n(s, r)h(r)dr$ is nuclear.
\end{lem}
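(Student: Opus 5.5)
The plan is to show that $K(\tilde f_n)$ is trace class by splitting it into finitely many operators of rank at most one, plus a part that is a product of two Hilbert--Schmidt operators. The starting observation is that $f_n$ is built from finitely many kernels of the separable form $g(s)\,K^H(t_j,r)\mathbb{I}_{r\le t_j}$. Symmetrization and the Volterra truncation $\int_0^s$ preserve the relevant estimates, because the operator $h\mapsto\int_0^s\tilde f_n(s,r)h(r)dr$ is obtained from the full kernel operators of $f_n$ and of $f_n^{\ast}(s,r)=f_n(r,s)$. Since $f_n$ vanishes unless $r\le s$, each of these lower-triangular kernels acts as a Volterra operator. So it suffices to prove nuclearity of the integral operators with kernels $\mathcal M^n_1$ and $\mathcal M^n_2$ and their transposes; nuclearity is preserved under adjoints and finite sums.

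\textbf{Term $\mathcal M^n_1$.} For fixed $k$ and $j<k$, the kernel factorises as
$$\mathcal M^n_{1,k,j}(s,r)=g_{k,j}(s)\,\ell_j(r),\qquad g_{k,j}(s)=\frac{b'(\phi^n_{t_j})}{\Gamma(\a)}s^{-\a}\mathbb{I}_{[t_k,t_{k+1})}(s)\int_{t_j}^{t_{j+1}}u^\a(s-u)^{\a-1}du,\qquad \ell_j(r)=K^H(t_j,r)\mathbb{I}_{r\le t_j}.$$
I will check that $g_{k,j}\in L^2$: on $[t_k,t_{k+1})$ with $k\ge1$ we have $s\ge t_k>0$, and the inner integral is bounded by $C\int_0^s u^{\a}(s-u)^{\a-1}du\le Cs^{2\a}$. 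I will also check that $\ell_j\in L^2$, since $\int_0^{t_j}K^H(t_j,r)^2dr=t_j^{2H}$. Hence each piece is a rank-one operator with trace norm $\|g_{k,j}\|_{L^2}\|\ell_j\|_{L^2}<\infty$. Since there are only finitely many pairs $(k,j)$, $\mathcal M^n_1$ is nuclear.

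\textbf{Term $\mathcal M^n_2$.} For each $k$ the same factorisation holds, with
$$g_k(s)=\frac{b'(\phi^n_{t_k})}{\Gamma(\a)}s^{-\a}\mathbb{I}_{[t_k,t_{k+1})}(s)\int_{t_k}^s u^\a(s-u)^{\a-1}du,\qquad \ell_k(r)=K^H(t_k,r)\mathbb{I}_{r\le t_k}.$$
Here $|g_k(s)|\le C s^{-\a}\cdot s^{\a}(s-t_k)^{\a}/\a$, which is bounded, so $g_k\in L^2$. Again we get finitely many rank-one operators. For $k=0$ the factor $\ell_0$ is $K^H(0,r)\mathbb{I}_{r\le0}=0$, so that term vanishes. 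Summing, $K(f_n)$ has finite rank of at most $n+n(n-1)/2$, and so does its transpose. Therefore $K(\tilde f_n)$ is nuclear.

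\textbf{Main obstacle.} The delicate point is the passage from full kernel operators to the Volterra-type operator $K(\tilde f_n)$ with upper limit $s$. For the transpose $f_n(r,s)$, the truncation $\mathbb{I}_{r\le s}$ breaks the rank-one structure, because it is supported where $r\ge s$. I would handle this by writing the truncated transpose kernel on each cell $[t_k,t_{k+1})\times[t_m,t_{m+1})$. Off-diagonal cells remain rank-one products, since the indicator is either $0$ or $1$ there. On the diagonal cells, where $s,r\in[t_k,t_{k+1})$, the kernel equals $\mathbb{I}_{r\le s}\,g(r)\ell(s)$. This is a Volterra operator of the form $h\mapsto \ell(s)\int_{t_k}^s g(r)h(r)dr$, i.e. a multiplication operator composed with the integration operator $I^1$ on an interval. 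I will show that this is a product of two Hilbert--Schmidt operators. Write it as $M_{\ell} I^{1/2}\circ I^{1/2}M_g$, using the composition formula $I^{1/2}I^{1/2}=I^1$. With $g$ bounded and $\ell$ continuous, $I^{1/2}$ composed with a bounded multiplication is only borderline Hilbert--Schmidt, so instead I plan to split $I^1=I^{1/2+\delta}I^{1/2-\delta}$. Using the Hölder regularity of $K^H(t_k,\cdot)$ away from $t_k$, the bound $K^H(t_k,s)\le C(t_k-s)^{H-1/2}$ (integrable since $H>1/4$), and the boundedness of $g$, both factors have square-integrable kernels. Their product is therefore nuclear. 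This Hilbert--Schmidt factorisation estimate is the step I expect to be hardest to make fully rigorous.
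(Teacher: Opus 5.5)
Your treatment of $\mathcal M^n_1$ and $\mathcal M^n_2$ is exactly the paper's proof. Each is a finite sum of rank-one operators $h\mapsto g(\cdot)\langle \ell,h\rangle$ with $g$ bounded on its cell and $\ell_j=K^H(t_j,\cdot)\mathbb{I}_{[0,t_j]}\in L^2$; your identity $\|\ell_j\|_{L^2}^2=t_j^{2H}$ is correct. So $K(f_n)$ has finite rank. The problem is the step you single out as the main obstacle. You claim that on a diagonal cell the truncated transposed kernel is $\mathbb{I}_{r\le s}\,g(r)\ell(s)$, and that $h\mapsto \ell(s)\int_{t_k}^s g(r)h(r)\,dr$ is nuclear because $I^1=I^{1/2+\delta}I^{1/2-\delta}$ is a product of two Hilbert--Schmidt operators. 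This is false. The kernel of $I^{1/2-\delta}$ is a multiple of $(s-r)^{-1/2-\delta}\mathbb{I}_{r<s}$, which is not square integrable. No other factorization can rescue it either. If $g,\ell$ are bounded away from zero on $[t_k,t_{k+1})$, nuclearity of $M_\ell I^1M_g$ would force nuclearity of $I^1=M_{1/\ell}(M_\ell I^1M_g)M_{1/g}$. That is the Volterra operator on an interval of length $1/n$, whose singular values $\frac{2}{(2m+1)\pi n}$, $m\ge0$, are not summable. So if such diagonal pieces were really present, your argument would fail, and in general the operator would not be nuclear.

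They are not present, and this is the observation you are missing. Every term of $f_n(a,b)$ carries a factor $\mathbb{I}_{[t_k,t_{k+1})}(a)\,K^H(t_j,b)\mathbb{I}_{b\le t_j}$ with $j\le k$, so $f_n(a,b)\neq0$ forces $b\le t_k\le a$. Thus the transposed kernel $f_n(r,s)$ is supported in $\{s\le r\}$; it is upper-triangular, not lower-triangular as your first paragraph says. On the integration region $\{r\le s\}$ of $K(\cdot)$ it can be nonzero only on the null set $\{r=s=t_k\}$. Concretely, your $\ell$ on the cell $[t_k,t_{k+1})$ is some $K^H(t_j,\cdot)\mathbb{I}_{[0,t_j]}$ with $j\le k$, which vanishes identically on $(t_k,t_{k+1})$. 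Hence
\[
K(\tilde f_n)h(s)=\tfrac12\int_0^s f_n(s,r)h(r)\,dr=\tfrac12K(f_n)h(s),
\]
which has finite rank by your first two steps, and no Hilbert--Schmidt factorization is needed. If instead $K(f)$ is read as the full operator $h\mapsto\int_0^1 f(s,r)h(r)\,dr$, as in Harg\'e and Moret--Nualart, then $K(\tilde f_n)=\tfrac12\big(K(f_n)+K(f_n)^*\big)$, and your remark that the transpose has finite rank already finishes the proof. This support argument is what the paper uses tacitly when it concludes, right after the two rank-one decompositions, that the symmetrization is again a finite-rank operator.
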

\begin{proof}
	We decompose
	\begin{align}
		f_n(s,r)&:=\mathcal{M}^n_1(s,r)+\mathcal{M}^n_2(s,r),\nonumber
	\end{align}
	where $\mathcal{M}^n_1$ and $\mathcal{M}^n_1$ denote the first and the second terms,
	respectively.
	
	For the first term $\mathcal{M}^n_1$, let $h\in L^2([0,1])$. Then we have
	\begin{align*}
		K(\mathcal{M}^n_1)(h)(s)
		&=
		\sum_{k=0}^{n-1}
		\frac{s^{-\alpha}}{\Gamma(\alpha)}
		\mathbb I_{s\in[t_k,t_{k+1})}
		\sum_{j=0}^{k-1}
		b'(\phi^n_{t_j})
		\\
		&\quad\cdot
		\left(
		\int_0^{t_j}K^H(t_j,r)h(r)dr
		\right)
		\int_{t_j}^{t_{j+1}}
		u^\alpha(s-u)^{\alpha-1}du .
	\end{align*}
	For each fixed pair $(k,j)$ with $0\leq j\leq k-1$, define
	\[
	g_{k,j}(s)
	=
	\frac{s^{-\alpha}}{\Gamma(\alpha)}
	\mathbb I_{s\in[t_k,t_{k+1})}
	\int_{t_j}^{t_{j+1}}
	u^\alpha(s-u)^{\alpha-1}du ,
	\]
	and
	\[
	L_{k,j}(h)
	=
	b'(\phi^n_{t_j})
	\int_0^{t_j}K^H(t_j,r)h(r)dr .
	\]
	Then
	\[
	K(\mathcal{M}^n_1)(h)(s)
	=
	\sum_{k=0}^{n-1}
	\sum_{j=0}^{k-1}
	g_{k,j}(s)L_{k,j}(h).
	\]
	Since the number of pairs $(k,j)$ is finite, we have
	\[
	\operatorname{rank}(K(\mathcal{M}^n_1))
	\leq
	\frac{n(n-1)}2 .
	\]
	Hence $K(f_n^{(1)})$ is a finite-rank operator.
	
	For \(j\leq k-1\) and \(s\in[t_k,t_{k+1})\), we have
	\(t_{j+1}\leq t_k\leq s\). Since \(u\in[t_j,t_{j+1}]\) which implies that $u^\alpha\leq s^\alpha$, we have 
	\begin{align}
		\int_{t_j}^{t_{j+1}}
		u^\alpha(s-u)^{\alpha-1}\,du
		\leq
		s^\alpha\int_{t_j}^{s}(s-u)^{\alpha-1}\,du=
		\frac{s^\alpha}{\alpha}(s-t_j)^\alpha.\nonumber
	\end{align}
	Consequently,
	\begin{align}
		|g_{k,j}(s)|
		\leq
		\frac{1}{\alpha\Gamma(\alpha)}
		\mathbb{I}_{[t_k,t_{k+1})}(s),\nonumber
	\end{align}
	where we have used \(0\leq s-t_j\leq 1\). It follows that
	\begin{align}
		\|g_{k,j}\|_{L^2([0,1])}^2
		\leq
		\frac{1}
		{\alpha^2\Gamma(\alpha)^2}
		\int_{t_k}^{t_{k+1}}1\,ds=
		\frac{1}
		{\alpha^2\Gamma(\alpha)^2}
		(t_{k+1}-t_k)
		<\infty.\nonumber
	\end{align}
	Thus,
	\[
	g_{k,j}\in L^2([0,1]).
	\]
	
	Moreover, for \(h\in L^2([0,1])\), by the Cauchy--Schwarz inequality, we have
	\begin{align}
		|L_{k,j}(h)|
		&\leq
		\|b'\|_{\infty}\left(
		\int_0^{t_j}|K^H(t_j,r)|^2\,dr
		\right)^{1/2}
		\left(
		\int_0^{t_j}|h(r)|^2\,dr
		\right)^{1/2} \nonumber\\
		&\leq
		\|b'\|_{\infty}\|K^H(t_j,\cdot)\|_{L^2([0,t_j])}
		\|h\|_{L^2([0,1])}.\nonumber
	\end{align}
	Since \(K^H(t_j,\cdot)\in L^2([0,t_j])\) and $b'$ is bounded, we have  \(L_{k,j}\) is a bounded linear functional on \(L^2([0,1])\). Hence, for each \(k,j\),
	\begin{equation}
		(g_{k,j}\otimes L_{k,j})(h)=L_{k,j}(h)g_{k,j}\nonumber
	\end{equation}
	is a rank-one nuclear operator. Since
	\begin{equation}
		K(\mathcal{M}^n_1)
		=
		\sum_{k=0}^{n-1}\sum_{j=0}^{k-1}
		g_{k,j}\otimes L_{k,j}
	\end{equation}
	is a finite sum of rank-one operators, \(K(\mathcal{M}^n_1)\) is nuclear.
	
	For the second term $\mathcal{M}^n_2$, we similarly obtain
	\begin{align*}
		K(\mathcal{M}^n_2)(h)(s)
		&=
		\sum_{k=0}^{n-1}
		\frac{s^{-\alpha}}{\Gamma(\alpha)}
		\mathbb I_{s\in[t_k,t_{k+1})}
		b'(\phi^n_{t_k})
		\\
		&\quad\times
		\left(
		\int_0^{t_k}K^H(t_k,r)h(r)dr
		\right)
		\int_{t_k}^{s}
		u^\alpha(s-u)^{\alpha-1}du .
	\end{align*}
	
	Define
	\[
	g_k(s)
	=
	\frac{s^{-\alpha}}{\Gamma(\alpha)}
	\mathbb I_{s\in[t_k,t_{k+1})}
	\int_{t_k}^{s}
	u^\alpha(s-u)^{\alpha-1}du ,
	\]
	and
	\[
	L_k(h)
	=
	b'(\phi^n_{t_k})
	\int_0^{t_k}K^H(t_k,r)h(r)dr .
	\]
	Then
	\[
	(K(f_n^{(2)})h)(s)
	=
	\sum_{k=0}^{n-1}g_k(s)L_k(h).
	\]
	Therefore,
	\[
	\operatorname{rank}(K(f_n^{(2)}))\leq n,
	\]
	and $K(f_n^{(2)})$ is finite-rank. By the similar argument as $\mathcal{M}^n_1$, we have $K(\mathcal{M}^n_2)$ is nuclear.
	
	Consequently,
	\[
	K(f_n)
	=
	K(\mathcal{M}^n_2)+K(\mathcal{M}^n_2)
	\]
	is a finite-rank nuclear operator.
	
	So the symmetrization of $f_n$ is also finite-rank nuclear operator.
\end{proof}

\begin{lem}\label{regular trace class}
	Assume $H\geq\frac{1}{2}$. Let $f_n$ be the function defined by
	\begin{align}
		f_n(s,u)&=\sum_{k=0}^{n-1}\frac{1}{\G(1-\a)}\mathbb{I}_{s\in[t_k,t_{k+1})}\Bigg[s^{-\a}b'(\phi^n_{t_k})K^H(t_k,u)\mathbb{I}_{u\leq t_k}+\a s^{\a}\nonumber\\&\quad\cdot\sum_{j=0}^{k-1}\int_{t_j}^{t_{j+1}}\frac{s^{-\a}b'(\phi^n_{t_k})K^H(t_k,u)\mathbb{I}_{u\leq t_k}-r^{-\a}b'(\phi^n_{t_j})K^H(t_j,u)\mathbb{I}_{u\leq t_j}}{(s-r)^{\a+1}}dr\nonumber\\&\quad+\a s^{\a}b'(\phi^n_{t_k})K^H(t_k,u)\mathbb{I}_{u\leq t_k}\int_{t_k}^{s}\frac{s^{-\a}-r^{-\a}}{(s-r)^{\a+1}}dr\Bigg].\nonumber
	\end{align}
	where $s,u\in(0,1]$ and $\phi^n$ is such that $\phi^n-x\in\mathcal{H}^p$ for $p>\frac{1}{H}$. Let $\tilde{f}_n$ be the
	symmetrization of $f_n$. Then the operator $K(\tilde{f}_n)$ defined by $K( \tilde{f}_n)(h)(s)=\int_{0}^{s}\tilde{f}_n(s, u)h(u)du$ is nuclear.
\end{lem}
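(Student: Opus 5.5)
The plan is to follow the proof of Lemma \ref{singular reace class} and show that $K(f_n)$, and hence $K(\tilde f_n)$, has finite rank. Each building block of $f_n$ is a product $g(s)\,L(u)$. Here $L(u)$ is one of the $n$ functions $K^H(t_k,u)\mathbb{I}_{u\le t_k}$, multiplied by the constant $b'(\phi^n_{t_k})$. The factor $g(s)$ is an explicit function of $s$ supported on $[t_k,t_{k+1})$.

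Concretely, I would split $f_n=\mathcal P^n_1+\mathcal P^n_{21}+\mathcal P^n_{22}+\mathcal P^n_3$, as in the proof of Theorem \ref{th:regular case}, and distribute the $r$-integrals. This gives
\[
f_n(s,u)=\sum_{k=0}^{n-1}\Big(a_k(s)\,b'(\phi^n_{t_k})K^H(t_k,u)\mathbb{I}_{u\le t_k}-\sum_{j=0}^{k-1}c_{k,j}(s)\,b'(\phi^n_{t_j})K^H(t_j,u)\mathbb{I}_{u\le t_j}\Big),
\]
with the coefficient functions
\[
a_k(s)=\frac{\mathbb{I}_{[t_k,t_{k+1})}(s)}{\Gamma(1-\alpha)}\Big(s^{-\alpha}+\alpha\int_0^{s}\frac{1-s^{\alpha}r^{-\alpha}\mathbb{I}_{r\ge t_k}}{(s-r)^{\alpha+1}}\,dr\Big)
\]
(grouping the $j$-sum's $s^{-\alpha}$ part together with the $\mathcal P^n_3$ part) and
\[
c_{k,j}(s)=\frac{\alpha\,\mathbb{I}_{[t_k,t_{k+1})}(s)}{\Gamma(1-\alpha)}\,s^{\alpha}\int_{t_j}^{t_{j+1}}\frac{r^{-\alpha}}{(s-r)^{\alpha+1}}\,dr .
\]
Then $K(f_n)h=\sum_k a_k\,L_k(h)-\sum_{k}\sum_{j<k}c_{k,j}\,L_j(h)$, where $L_k(h)=b'(\phi^n_{t_k})\int_0^{t_k}K^H(t_k,u)h(u)\,du$. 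Each $L_k$ is a bounded functional by Cauchy--Schwarz, since $K^H(t_k,\cdot)\in L^2$. So $K(f_n)$ has rank at most $n+n(n-1)/2$. Its adjoint, and hence $K(\tilde f_n)=\tfrac12(K(f_n)+K(f_n)^*)$, is then also finite rank. Being finite rank, it is nuclear, provided every $a_k$ and $c_{k,j}$ lies in $L^2([0,1])$.

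The main obstacle is precisely this $L^2$ membership, because near $s=t_k^+$ the kernel $(s-r)^{-\alpha-1}$ produces singular behaviour. I would treat the pieces in the following order.
\begin{itemize}
\item For $c_{k,j}$ with $j\le k-2$, the integrand is bounded and there is nothing to check. For $j=k-1$, the bound $\int_{t_{k-1}}^{t_k}(s-r)^{-\alpha-1}dr\le\alpha^{-1}(s-t_k)^{-\alpha}$ applies. The only potentially problematic $r^{-\alpha}$ factor occurs at $j=0$, where $\int_0^{t_1} r^{-\alpha}(s-r)^{-\alpha-1}dr$ is finite for $s>t_1$ and is $O((s-t_1)^{-\alpha})$.
\item For $a_k$, split $\int_0^{t_k}(s-r)^{-\alpha-1}dr\le\alpha^{-1}(s-t_k)^{-\alpha}$. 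The remaining part is $\int_{t_k}^s\frac{1-(s/r)^{\alpha}}{(s-r)^{\alpha+1}}dr$, which by \eqref{small ineq} is $O\big(t_k^{-1}(s-t_k)^{1-\alpha}\big)$ for $k\ge1$. For $k=0$, \eqref{integral transform} gives $C_\alpha s^{-\alpha}$ after multiplying by $s^{\alpha}$.
\end{itemize}
All these singularities are of order $(s-t_k)^{-\alpha}$ or $s^{-\alpha}$. Since $\alpha=H-\tfrac12<\tfrac12$, we have $2\alpha<1$, so they are square-integrable. This yields $a_k,c_{k,j}\in L^2$ and completes the argument.
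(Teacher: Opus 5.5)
Your proof is correct and follows the paper's route. Both arguments write $f_n(s,u)$ as a finite sum of functions of $s$ times $K^H(t_k,u)\mathbb{I}_{u\le t_k}$, so that $K(f_n)$, and hence $K(\tilde f_n)$, has finite rank; regrouping your terms by $L_j$ recovers the paper's rank bound $n$. Your check that $a_k,c_{k,j}\in L^2$ (via the $(s-t_k)^{-\alpha}$ and $s^{-\alpha}$ bounds with $2\alpha<1$) fills a step the paper skips, since it goes from finite rank to nuclear without checking that the range functions are square-integrable. One small correction: with the Volterra definition $\int_0^s$ used in the statement, $K(\tilde f_n)=\tfrac12 K(f_n)$ rather than $\tfrac12\big(K(f_n)+K(f_n)^*\big)$, because $f_n(u,s)=0$ for $u<s$; this is still finite rank, so your conclusion is unaffected.
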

\begin{proof}
	We firstly can rewrite $f_n$ as
	\begin{align}
		f_n(s,u)&=\sum_{k=0}^{n-1}\frac{1}{\G(1-\a)}\mathbb{I}_{s\in[t_k,t_{k+1})}s^{-\a}b'(\phi^n_{t_k})K^H(t_k,u)\mathbb{I}_{u\leq t_k}+\a s^{\a}\nonumber\\&\quad\cdot\sum_{k=0}^{n-1}\frac{1}{\G(1-\a)}\mathbb{I}_{s\in[t_k,t_{k+1})}\sum_{j=0}^{k-1}\int_{t_j}^{t_{j+1}}\frac{s^{-\a}b'(\phi^n_{t_k})K^H(t_k,u)\mathbb{I}_{u\leq t_k}-r^{-\a}b'(\phi^n_{t_j})K^H(t_j,u)\mathbb{I}_{u\leq t_j}}{(s-r)^{\a+1}}dr\nonumber\\&\quad+\a s^{\a}\sum_{k=0}^{n-1}\frac{1}{\G(1-\a)}\mathbb{I}_{s\in[t_k,t_{k+1})}b'(\phi^n_{t_k})K^H(t_k,u)\mathbb{I}_{u\leq t_k}\int_{t_k}^{s}\frac{s^{-\a}-r^{-\a}}{(s-r)^{\a+1}}dr.\nonumber\\&:=f_{n,1}(s,u)+f_{n,2}(s,u)+f_{n,3}(s,u).\nonumber
	\end{align}
	For $k=0,\ldots,n-1$, define
	\[
	\Psi_k(u):=K^H(t_k,u)\mathbf 1_{\{u\le t_k\}}.
	\]
	We now show that $f_n$ admits a finite-dimensional decomposition. From the definition of $f_n$, for every $s\in[t_k,t_{k+1})$, all the $u$-dependence appears only through
	\[
	\Psi_0(u),\ldots,\Psi_k(u).
	\]
	Indeed, the terms $f_{n,1}$ and $f_{n,3}$ contain only $\Psi_k$, while
	$f_{n,2}$ contains only $\Psi_j$ with $j=0,\ldots,k-1$.
	Hence there exist measurable functions $F_{n,k}$, $k=0,\ldots,n-1$, such that
	\[
	f_n(s,u)
	=
	\sum_{k=0}^{n-1}
	F_{n,k}(s)\Psi_k(u),
	\]
	and moreover
	\[
	F_{n,k}(s)=0,
	\qquad s<t_k.
	\]
	
	Consider the Volterra operator
	\[
	(K(f_n)h)(s)
	=
	\int_0^s f_n(s,u)h(u)\,du.
	\]
	Substituting the above representation yields
	\[
	(K(f_n)h)(s)
	=
	\sum_{k=0}^{n-1}
	F_{n,k}(s)
	\int_0^s \Psi_k(u)h(u)\,du .
	\]
	
	Since
	\[
	\operatorname{supp}\Psi_k\subset [0,t_k],
	\]
	we have
	\[
	\int_0^s\Psi_k(u)h(u)\,du
	=
	\int_0^{t_k}\Psi_k(u)h(u)\,du
	=:c_k(h)
	\]
	whenever $s\ge t_k$.
	On the other hand,
	\[
	F_{n,k}(s)=0
	\qquad\text{for } s<t_k.
	\]
	Therefore,
	\[
	F_{n,k}(s)
	\int_0^s\Psi_k(u)h(u)\,du
	=
	F_{n,k}(s)c_k(h),
	\]
	and consequently
	\[
	(K(f_n)h)(s)
	=
	\sum_{k=0}^{n-1}
	c_k(h)F_{n,k}(s).
	\]
	
	It follows that
	\[
	\operatorname{Ran}(K(f_n))
	\subset
	\operatorname{span}
	\{F_{n,0},\ldots,F_{n,n-1}\}.
	\]
	Hence
	\[
	\operatorname{rank}(K(f_n))
	\le n.
	\]
	Thus $K(f_n)$ is a finite-rank operator and therefore nuclear. Consequently $K(\widetilde f_n)$ is a finite-rank operator. In particular,
	it is nuclear.
\end{proof}
	\section*{Conflicts of interests}
	The authors declare no conflict of interests.

	{\bf Acknowledgement}  H. Gao was supported in part by the NSFC Grant Nos. 12571164 and the Jiangsu Provincial Scientific Research Center of Applied Mathematics under Grant No. BK20233002. S. Liu  and J. Duan are supported by the National Natural Science Foundation of China (Grant
	No. 12141107), the Guangdong Provincial Key Laboratory of Mathematical and Neural Dynamical
	Systems (Grant No. 2024B1212010004), the CrossDisciplinary ResearchTeam on Data Science and
	Intelligent Medicine (Grant No. 2023KCXTD054), and the Guangdong-Dongguan Joint Research
	(Grant No. 2023A151514 0016).

	
	\bibliographystyle{amsplain}
	
	\bibliography{refFBM}
	
\end{document}